\newcommand{\R}{\mathbf{R}}
\newcommand{\N}{\mathbf{N}}
\newcommand{\bP}{\mathbf{P}}
\newcommand{\mN}{\mathcal{N}}
\newcommand{\bA}{\mathbf{A}}
\newcommand{\bB}{\mathbf{B}}
\newcommand{\bb}{\mathbf{b}}
\newcommand{\rt}{\bullet}
\newcommand{\bm}{\bar{\mu}}
\newcommand{\tmu}{\widetilde{\mu}}
\newcommand{\T}{\mathcal{T}}
\newcommand{\mF}{\mathcal{F}}
\newcommand{\mG}{\mathcal{G}}
\newcommand{\mC}{\mathcal{C}}
\newcommand{\mK}{\mathcal{K}}
\newcommand{\tX}{\widetilde{X}}
\newcommand{\tc}{\widetilde{c}}
\newcommand{\cP}{\mathcal{P}}
\newcommand{\cQ}{\mathcal{Q}}
\newcommand{\mP}{P}
\newcommand{\eps}{\varepsilon}
\newcommand{\const}{\mathop{\mathrm{const}}} 
\newcommand{\ddelta}{\mathrm{Dirac}}
\newcommand{\Fbm}{F_{\bm}}
\newcommand{\kal}{\kappa_{\alpha}}
\newcommand{\kone}{\kappa_{(1-\alpha)}}
\newtheorem{thm}{Theorem}
\newtheorem{lem}{Lemma}
\newtheorem{prop}{Proposition}
\theoremstyle{definition}
\newtheorem{dfn}{Definition}
\newtheorem{assumption}{}
\theoremstyle{remark}
\newtheorem{rem}{Remark}
\title{Cut-off method for endogeny of recursive tree processes}
\date{\today}
\author{Victor Kleptsyn\footnote{CNRS, Institut de Recherches Math\'ematiques de Rennes, Campus Beaulieu, 35042 Rennes, France.
\newline E-mail: victor.kleptsyn@univ-rennes1.fr}, Michele Triestino\footnote{
Institut de Math\'ematiques de Bourgogne,
9 av.~Alain Savary,
21000 Dijon, France. \newline E-mail: michele.triestino@u-bourgogne.fr}}
\begin{document}

\maketitle

\begin{abstract}
Given a solution to a recursive distributional equation, a natural (and non-trivial) question is whether the corresponding recursive tree process is \emph{endogenous}. 
That is, whether the random environment almost surely defines the tree process.
We propose a new method of proving endogeny, which applies to various processes. As explicit examples, we establish endogeny of the random metrics on non-pivotal hierarchical graphs defined by multiplicative cascades and of mean-field optimization problems as the mean-field matching and travelling salesman problems in pseudo-dimension $q>1$.
\footnote{\textbf{MSC\textup{2010}:} Primary 60E05, 60B10, 60E15. Secondary 81T20, 82B44, 90C27.
\newline
\textbf{Key-words:} recursive distributional equations, endogeny, random metrics, mean-field combinatorial optimization.
}
\end{abstract}

\section{Introduction}
\emph{Recursive distributional equations} (RDEs) and the associated \emph{recursive tree processes} (RTPs) occur in various problems, mainly related to branching processes, random geometry, statistical physics and combinatorial optimization, as also probabilistic analysis of algorithms.
We briefly recall here some generalities, referring the reader to the excellent survey~\cite{RDE} for more details.

Informally speaking, recursive tree processes are analogues of Markov processes, but with each value ``taking as input'' several ``preceding'' values, forming thus a tree of dependences (whence the name). More precisely, a RTP is a rooted tree $(\T,\rt)$ of random variables $\{(X_t,\xi_t)\}_{t\in\T}$, where at each vertex $t\in \T$ the value $X_t$ is related to the values $X_{t_i}$'s at its children and to the random variable~$\xi_t$ via a given function $R$:
\[
X_t=R(X_{t_1},\dots,X_{t_d};\xi_t);
\]
in addition, natural independence conditions on $X_t$'s and $\xi_t$'s are required. We give the precise definition below, see \S\,\ref{par:IRTP}.

Now, assume that the laws of all the $\xi_t$'s are given according to the same probability measure~$m$ (this assumption will stand throughout the whole paper). Then it is natural to ask whether the laws of all the~$X_t$'s coincide, too. If this is the case, the RTP is called \emph{invariant}, and the law of~$X_t$ is a solution to the RDE defined by the function $R$:
\[
X\stackrel{d}{=}R(X_1,\ldots,X_d;\xi).
\]

Given an invariant RTP, it is highly interesting to find out whether the values of the $X_t$'s are (almost surely) defined by the random environment $\{\xi_t\}_{t\in \T}$, that is, whether the conditional distribution of the $\{X_t\}$'s given the $\{\xi_t\}$'s is almost surely a Dirac measure. If this is the case, according to Aldous and Bandyopadhyay \cite{RDE}, the RTP is called~\emph{endogenous}.

Even existence of invariant RTPs (or, what is the same, existence of a solution to a RDE) is a difficult question in general. We again refer the reader to~\cite[\S~2.2]{RDE} for an overview of known methods (among which, the most widely used are monotonicity for the compact case and contraction in more general settings). 

\smallskip

In~\cite{KKT}, in collaboration with M.~Khristoforov, we have proposed a \emph{cut-off method} to construct \emph{solutions} of monotone RDEs in a non-compact setting. We have applied this method in the particular case of random metrics associated with multiplicative cascades on hierarchical graphs (we recall the details below, in~\S\,\ref{s:background}). 
Moreover, the method is sufficiently flexible to be applied to other situations (for instance, $(\min,+)$-type RDEs with some additional conditions imposed). 

The aim of this paper is to show that the cut-off method can be modified also to prove \emph{endogeny} of certain RTPs. As a first explicit example, we establish 
endogeny of the tree processes associated with random metrics defined via multiplicative cascades on non-pivotal hierarchical graphs. This has as rather reassuring consequence that the random metric spaces that were defined in~\cite{KKT} are measurable functions of the cascade (cf.~\cite[\S\,3.4]{KKT}).

For the moment, we state our main theorem with a ``black box'' of assumptions, postponing the listing  till \S\,\ref{s:assumptions}:
\begin{thm}[Endogeny via cut-off]\label{t:main}
Suppose that the law $m$ and the relation function $R$ satisfy assumptions~\ref{a:incr}--\ref{a:superexp}, listed in \S\,\ref{s:assumptions}, and let~$\bm$ be a solution to the corresponding RDE. Then the invariant RTP associated with $\bm$ is endogenous.
\end{thm}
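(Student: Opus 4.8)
The plan is to reduce endogeny to a sandwiching statement and then to close the sandwich with the cut-off. First I would record the standard reformulation (following Aldous--Bandyopadhyay, cf.~\cite{RDE}): writing $\mG_n=\sigma(\xi_t:|t|\le n)$ and $\mG_\infty=\bigvee_n\mG_n=\sigma(\xi_t:t\in\T)$, the invariant RTP attached to $\bm$ is endogenous precisely when $X_\rt$ is $\mG_\infty$-measurable up to a null set; stationarity of the RTP then propagates this to every $X_t$, and since a countable family of almost surely determined random variables is jointly almost surely determined, the conditional law of $\{X_t\}$ given $\{\xi_t\}$ is a.s.\ a Dirac mass. Hence it suffices to produce, for each $n$, two $\mG_n$-measurable random variables $\underline Y_n\le X_\rt\le\overline Y_n$ (the inequalities holding off an event of probability $\eps_n$, with $\eps_n\to0$) such that $\overline Y_n-\underline Y_n\to0$ in probability: then $X_\rt$ is, along a subsequence, an almost sure limit of $\mG_\infty$-measurable variables, hence $\mG_\infty$-measurable. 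Note that this route requires no integrability of $X_\rt$.

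Next I would build the pair $(\underline Y_n,\overline Y_n)$ by the cut-off recipe of~\cite{KKT}. Fix thresholds $\kappa_n\uparrow\infty$ to be tuned at the end. On the tree truncated at depth $n$, assign to every level-$n$ vertex the upper end of the relevant cut-off window --- the essential supremum of $\bm$, or $\kappa_n$ when that is infinite --- and propagate this boundary assignment towards the root via $R$, using the environment $\{\xi_t:|t|<n\}$ and truncating each value produced at a vertex $t$ to the window prescribed by the cut-off scheme at level $|t|$; define $\overline Y_n$ as the resulting value at $\rt$, and $\underline Y_n$ symmetrically from the lower end. These are $\mG_n$-measurable by construction, and assumption~\ref{a:incr} (monotonicity of $R$) gives $\underline Y_n\le X_\rt\le\overline Y_n$ on the event that every level-$n$ value of the genuine RTP lies in its window; the complementary event has probability at most $d^{\,n}\,\bm\bigl((\kappa_n,\infty)\bigr)$ by a union bound over the $d^{\,n}$ vertices at level $n$. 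Assumption~\ref{a:superexp} (super-exponential tails of $\bm$) is exactly what lets $\kappa_n$ grow slowly enough to keep the truncated processes meaningful while forcing $d^{\,n}\,\bm\bigl((\kappa_n,\infty)\bigr)\to0$, so that $\eps_n\to0$.

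The substantive step is to show that the sandwich closes, $\overline Y_n-\underline Y_n\to0$ in probability, and this is where the intermediate assumptions do their work. The discrepancy between the upper and lower propagated values at a vertex $t$ is a measurable function of the environment strictly below $t$; I would show that a single application of $R$, averaged over the law $m$ of $\xi_t$, contracts this discrepancy (equivalently, in a coupling formulation, that with probability bounded below $R$ collapses the discrepancies coming from the children into a negligible discrepancy at $t$), and then iterate over the $n$ mutually independent levels separating $\rt$ from the boundary, so that the expected --- or in-probability --- gap at the root tends to $0$. The super-exponential bound is used here a second time, to keep these gaps integrable uniformly in $n$ in spite of the slowly widening windows $[\,\cdot\,,\kappa_n]$. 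Together with the previous paragraph this yields $\mG_\infty$-measurability of $X_\rt$, that is, endogeny.

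The main obstacle I anticipate is precisely this contraction estimate in the non-compact regime: because the cut-off windows widen with $n$, there is no ready-made uniform contraction constant, and one must interleave the choice of the thresholds $\kappa_n$ with the decay estimate so that the discrepancy shrinks faster than the window grows --- this is the heart of the matter and the place where the full strength of the list~\ref{a:incr}--\ref{a:superexp} will be needed. A lesser, but still delicate, point is the bookkeeping showing that after each truncation the upper and lower processes remain correctly ordered and $\mG_n$-measurable.
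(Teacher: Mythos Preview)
Your proposal has genuine gaps, and the approach diverges from the paper in a way that does not seem repairable under the stated assumptions.

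First, you misread assumption~\ref{a:superexp}. It does \emph{not} assert super-exponential tails of $\bm$; it asserts super-exponential decay of the displacement $\Delta(a)=-\tc(\Phi_a[\bm])$, i.e.\ of the shift of the \emph{center} caused by a single cut-off at level $a$. In the applications this happens to be linked to tail decay of $\bm$, but as a hypothesis it is a statement about the dynamics of $\Phi$ and $\tc$, not about $\bm((\kappa,\infty))$. Your union bound $d^{\,n}\bm((\kappa_n,\infty))$ therefore has no footing in the assumptions, and in any case it collapses when $d=\infty$ (which the paper explicitly allows, and which occurs in the mean-field examples).

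Second, your sandwich needs a lower process $\underline Y_n$, but the assumptions are one-sided: only the upper cut-off $R_a=\min(R,a)$ appears, and the paper remarks after~\ref{a:superexp} that the asymmetry is deliberate. Nothing in \ref{a:incr}--\ref{a:superexp} controls what a lower truncation does to the center, so the symmetric scheme is not justified.

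Third --- and this is the decisive point --- your ``contraction'' step is not supported by the hypotheses. Assumptions~\ref{a:everywhere}--\ref{a:convergence} give convergence of \emph{laws} under iteration of $\Phi$ to a translate $\bm_{\tc(\mu)}$; they say nothing about a.s.\ contraction of the discrepancy $\overline Y_n-\underline Y_n$ for fixed environment. The paper never attempts such a pathwise estimate. Instead it works entirely at the level of measures: it chooses a specific sequence $\bA=\{a_n\}$ of cut-off levels, builds the endogenous process $X_t^{\bA}$ by propagating $+\infty$ down with cut-offs $R_{a_n}$, and then \emph{removes} the cut-offs from the root outward ($\bA\mapsto\bA^{(\ell)}$). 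Monotonicity gives convergence of $X_t^{\bA^{(\ell)}}$ to a limit $\tX_t^{\bA}$ that is still a measurable function of $\{\xi_t\}$ and satisfies the original recursion. The whole difficulty is to choose $\bA$ so that the marginals $\tmu_n^{\bA}$ equal $\bm$; this is done by a block construction in which one tracks only the center $\tc$ of the intermediate laws, and~\ref{a:superexp} is used exactly once, to show that the ratio $\Delta(b+\delta'')/\Delta(b)$ can be made small (Lemma~7 in the paper), so that the drift incurred while pulling $\ddelta_{+\infty}$ down to near $\bm$ does not overshoot past $\bm_{-\delta}$.

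In short: the paper does not sandwich $X_\rt$; it directly \emph{constructs} an endogenous version of the invariant RTP with marginals $\bm$, and the super-exponential hypothesis enters through the center function, not through a tail/union-bound argument. Your plan, as written, does not have a mechanism to close the gap $\overline Y_n-\underline Y_n$ from the assumptions available.
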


Its applicability for the multiplicative cascades on non-pivotal hierarchical graphs is guaranteed by the second theorem:
\begin{thm}\label{t:applying}
Assumptions~\ref{a:incr}--\ref{a:superexp} are satisfied for the RDEs \eqref{eq:Gamma-log} corresponding to random metrics associated with multiplicative cascades on non-pivotal hierarchical graphs with a Gaussian measure~$m$ (that is, with the log-normal random rescaling law).
\end{thm}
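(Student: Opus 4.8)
The plan is to verify the assumptions one after another straight from the explicit shape of the RDE~\eqref{eq:Gamma-log}. After passing to logarithmic coordinates the random-metric recursion of a parallel--series block becomes a $(\min,+)$ relation: if $d$ is the number of edges of the generating graph, the relation function is
\[
R\bigl((y_e)_{e};\,(\zeta_e)_e\bigr)=\min_{\gamma}\ \sum_{e\in\gamma}\bigl(\zeta_e+y_e\bigr),
\]
the minimum running over source-to-sink paths $\gamma$ of the generating graph and $\zeta_e=\log W_e$ being a Gaussian variable (the logarithm of the log-normal rescaling $W_e$). With $R$ in this form the monotonicity assumption~\ref{a:incr} is immediate, since $\min$ and $+$ are non-decreasing in each variable; and the measurability, continuity and normalisation conditions among the intermediate assumptions are equally routine, $R$ being piecewise affine and $1$-Lipschitz along each path.

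Next come the intermediate assumptions, whose purpose is to supply the mild contraction / averaging that the cut-off argument behind Theorem~\ref{t:main} consumes. This is precisely where \emph{non-pivotality} of the generating graph is used: since no edge is pivotal, every edge is bypassed by an alternative route, so the outer $\min$ in $R$ effectively compares at least two source-to-sink branches built from independent copies of $X$. The minimum of two independent copies is stochastically strictly smaller than a single copy, and quantitatively this yields the strict decrease (of the variance, or of whichever modulus the assumptions are phrased in terms of) after one iteration of the recursion. I would check this by the Gaussian comparison already used in~\cite{KKT}, tracking the effect of one application of $R$ on the relevant functional.

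The substantive point is the super-exponential assumption~\ref{a:superexp}, and this is the only place where Gaussianity of $m$ genuinely enters. Two statements are required: (i)~the environment $\zeta_e$ has super-exponentially decaying tails, which is automatic for a Gaussian, $\bP(|\zeta_e|>t)\le 2\,e^{-t^2/2\sigma^2}$; and (ii)~the solution $\bm$ — equivalently, the finite-depth cut-off approximants $\bm_n$ — has super-exponentially decaying tails too. For~(ii) I would run a Gaussian-concentration argument level by level: the random value obtained by unfolding the recursion $n$ times is a function of finitely many independent Gaussians that is Lipschitz for the Euclidean ($\ell^2$) metric, and the fixed-point/contraction property that makes $\bm$ exist in the first place is exactly what keeps the effective Lipschitz constant bounded as $n\to\infty$; the Gaussian concentration inequality of Borell and Sudakov--Tsirelson (or the Gaussian logarithmic Sobolev inequality) then gives an upper-tail bound of Gaussian type on the depth-$n$ value with constants uniform in $n$, which passes to the limit $\bm$. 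The lower tail is obtained the same way, or more cheaply from monotonicity. Together~(i) and~(ii) are the content of~\ref{a:superexp}; alternatively one may simply re-run the tail estimates of~\cite{KKT} with the log-normal law in place, which is essentially the same computation.

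The main obstacle I expect is step~(ii): propagating a tail bound through the recursion with constants that do \emph{not} deteriorate with depth. Concretely one has to show that the growth of the effective variance proxy produced by the inner sums $\sum_{e\in\gamma}$ at deeper and deeper levels is strictly dominated by the contraction coming from the outer $\min$ over the $\ge 2$ disjoint branches, so that the limiting variance proxy stays finite — the same quantitative balance that underlies existence of the fixed point. Once this is in hand, collecting~\ref{a:incr}--\ref{a:superexp} and invoking Theorem~\ref{t:main} yields endogeny.
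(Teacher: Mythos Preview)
Your proposal has several concrete gaps.

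First, you have the wrong relation function. In logarithmic coordinates the paper's RDE~\eqref{eq:Gamma-log} is
\[
R\bigl((X_e)_e;\xi\bigr)=\xi+\log\min_{\pi}\sum_{e\in\pi}\exp(X_e),
\]
with a \emph{single} additive noise~$\xi$ and a $\log\text{-}\Sigma\text{-}\exp$ inside; it is not the piecewise-affine $\min_\gamma\sum_{e\in\gamma}(\zeta_e+y_e)$ you wrote. This matters for the translation-equivariance check (which still works, but for a different reason than ``piecewise affine'') and for any Lipschitz/concentration estimate you later want to run.

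Second, the assumptions you call ``intermediate'' are not a vague contraction-of-variance statement. They are: a choice of the working space~$\cQ$ with its metric (\ref{a:space}); existence of a stationary measure $\bm\in\cP_0\cap\cQ$ (\ref{a:stationary}); pointwise convergence of $\Phi^n[\mu]$ to a translate $\bm_{\tc(\mu)}$ on all of~$\cQ$ (\ref{a:everywhere}); and a compact neighbourhood $\cQ_0$ of~$\bm$ on which this convergence is \emph{uniform} and $\tc$ is continuous (\ref{a:convergence}). The paper handles these by choosing $\cQ=\{\mu:\mu(\{\pm\infty\})\le\alpha\}$, quoting Theorems~1 and~2 of~\cite{KKT} for \ref{a:stationary} and \ref{a:everywhere}, and then proving the uniformity \ref{a:convergence} via the ``class $(\alpha,\delta)$'' machinery (Propositions~\ref{l:shift}--\ref{c:upper} and Lemma~\ref{l:uniform}). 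Your sketch does not address any of these, and the ``strict decrease of the variance after one iteration'' idea is neither what is claimed nor, on its own, sufficient.

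Third, and most importantly, you misread assumption~\ref{a:superexp}. It does \emph{not} ask for super-exponential tails of~$m$ or of~$\bm$; it asks that the \emph{center-shift function}
\[
\Delta(a)=-\tc\bigl(\Phi_a[\bm]\bigr)
\]
decays super-exponentially in~$a$. Super-exponential tails of~$\bm$ are indeed an ingredient (this is \cite[Lemma~19]{KKT}, and your Gaussian-concentration route could in principle reprove it), but one still needs the bridge from ``$\Phi_a[\bm]$ differs from $\bm$ only on a set of mass $f_{\bm}(a)$'' to ``its center moves by at most $\mathrm{const}\cdot f_{\bm}(a)$''. The paper obtains this via the class-$(\alpha,\delta)$ estimates: $\Phi_a[\bm]$ lies in $\mC_{\alpha,f_{\bm}(a)}$, and Proposition~\ref{c:upper} then gives $|\tc(\Phi_a[\bm])|\le 2L\,f_{\bm}(a)$. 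Without some such quantitative link between tail mass and center displacement, knowing that $\bm$ has Gaussian tails does not yet yield~\ref{a:superexp}.
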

\begin{rem}
As in~\cite[\S\,1.5, Rem.~2]{KKT}, the statement of Theorem~\ref{t:applying} holds also provided that the measure~$m$ is absolutely continuous with everywhere positive continuous density and has normal tails at $\pm \infty$.
\end{rem}

Interestingly, different approaches by cut-off have been used to find solutions to the RDEs that are associated with problems of mean-field combinatorial optimization (see Section \ref{s:general}, as well as \cite{zeta2}, \cite[\S\,7]{RDE}): the mean-field approximation for minimal matching in pseudo-dimension $0<q<\infty$ \cite{salez,W-match,larsson},
and for the minimum weight $k$-factor problem, which corresponds to the mean-field travelling salesman problem when $k=2$ \cite{W-TSP,PW}. For the latter, we point out that \cite{K} makes no use of cut-off to show existence of the solution, although many ideas are in the spirit of \cite{KKT}, when proving that the solution is a global attractor. Relying on these works, we apply our main result to show the endogeny. For the minimal matching, the case $q=1$ was already solved in \cite{B}; here we use the results in \cite{salez,W-match} to treat the problem of pseudo-dimension $q>1$.
This completes Aldous' program for the rigorous analysis of physical predictions for these models (see \cite[\S~7.5]{RDE} and \cite{BP,KS}).

\begin{thm}\label{t:TSP}
Assumptions~\ref{a:incr}--\ref{a:superexp} are satisfied for the RDE \eqref{eq:TSPk} corresponding to the minimum weight $k$-factor problem ($k\ge 1$) in pseudo-dimension $q>1$.

This includes the mean-field minimal matching problem ($k=1$) and the mean-field travelling salesman problem ($k=2$).
\end{thm}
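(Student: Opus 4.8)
The plan is to verify assumptions~\ref{a:incr}--\ref{a:superexp} for the RDE~\eqref{eq:TSPk} one at a time: all of them except the last are structural and follow at once from the shape of the relation function, while the super-exponential decay assumption~\ref{a:superexp} is where the hypothesis $q>1$ genuinely enters, and it is the only step that requires real work.

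Recall the form of~\eqref{eq:TSPk}: the minimum-weight $k$-factor in pseudo-dimension $q$ has relation function
\[
X \stackrel{d}{=} \Phi_k\bigl(\{\,\xi_i-X_i\,\}_{i\ge1}\bigr),
\]
where $\{\xi_i\}_{i\ge1}$ is a Poisson point process on $(0,\infty)$ of intensity proportional to $x^{q-1}\,dx$, the $X_i$ are i.i.d.\ copies of $X$ independent of $\{\xi_i\}_i$, and $\Phi_k$ extracts the $k$-th smallest point of a locally finite (hence a.s.\ infinite) configuration; for $k=1$ this is the Aldous logistic-type RDE for mean-field matching, for $k=2$ the $2$-factor RDE underlying the mean-field TSP. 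Assumption~\ref{a:incr} is then immediate: an order statistic is non-decreasing in each entry, so~\eqref{eq:TSPk} is monotone (increasing) in the environment variables $\xi_i$ and $1$-Lipschitz in the state variables $X_i$ --- exactly the $(\min,+)$-type structure the method handles. Measurability and continuity of the relation, as well as its genuine dependence on the environment, are equally clear for an order-statistic-minus-shift map read against a Poisson environment. Existence of an invariant solution~$\bm$ --- the standing hypothesis of Theorem~\ref{t:main} --- is provided by~\cite{salez,W-match} for $k=1$, by~\cite{W-TSP,PW,K} for $k=2$, and by the same circle of methods for general $k\ge1$.

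The one structural point that is not completely routine is that the degree here is infinite, so that $\Phi_k(\{\xi_i-X_i\})$ a priori depends on all the children at once. This is dealt with by locality: the $\xi_i$ accumulate only at $+\infty$, with Weibull-type spacing (the smallest point satisfies $\bP(\xi_1>t)=\exp(-c\,t^{q})$), and --- once~\ref{a:superexp} is available --- the $X_i$ have super-exponentially decaying tails, so that for each level $L$ only finitely many children can produce a value above $L$, the number of such children itself having a super-exponentially small upper tail. If the abstract framework of~\S\,\ref{par:IRTP} is phrased for finite degree, one runs it on the truncated RTP obtained by retaining only the children with $\xi_i\le L$ and then lets $L\to\infty$; this is precisely the cut-off step, parallel to its use in~\cite{KKT} for hierarchical graphs.

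This leaves assumption~\ref{a:superexp}, which for~\eqref{eq:TSPk} comes down to a super-exponential two-sided tail bound for~$\bm$. Writing $F$ for the c.d.f.\ of $X$ and combining~\eqref{eq:TSPk} with the Poisson calculus, the event $\{\Phi_k(\{\xi_i-X_i\})<x\}$ becomes $\{N(x)\ge k\}$ for a Poisson variable $N(x)$ of mean $\lambda(x)=\int_0^\infty\bigl(1-F(\xi-x)\bigr)\,\nu(d\xi)$, where $\nu$ is the edge-weight intensity $\propto\xi^{q-1}\,d\xi$; this is a self-consistent identity for $F$ with the Weibull intensity as its kernel. Bootstrapping it --- feeding in a crude exponential tail (available from the global-attractivity results of~\cite{salez,W-match,K}) and iterating --- yields $\bm\bigl([L,\infty)\bigr)+\bm\bigl((-\infty,-L]\bigr)\le\exp(-c'\,L^{q})$ for all large $L$, and this is exactly where $q>1$ is decisive: integrating the intensity $\xi^{q-1}$ against a rapidly decaying tail produces the exponent $L^{q}$, which beats every linear exponent precisely when $q>1$ (for $q=1$ one recovers only the logistic, i.e.\ exponential, tail treated in~\cite{B}, and for $q<1$ the tail is honestly stretched-exponential, so that~\ref{a:superexp} fails). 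I expect this bootstrap to be the main obstacle --- in particular, making it uniform in $k$ and turning the heuristic ``a Poisson count is at least $k$'' computation into rigorous matching upper and lower bounds on both tails; granting it, the remaining assumptions reduce to the locality observation above together with the existence and attractivity results of~\cite{salez,W-match,W-TSP,PW,K}.
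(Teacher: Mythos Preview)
Your proposal has a genuine gap at the very first assumption. You claim~\ref{a:incr} is immediate because an order statistic is non-decreasing in its entries, but the entries in~\eqref{eq:TSPk} are $\xi_i-X_i$, so the relation function is monotone \emph{non-increasing} in the state variables $X_i$ --- and, for the same reason, anti- rather than translation-equivariant: $R(X_1+c,\dots;\xi)=R(X_1,\dots;\xi)-c$, so~\ref{a:translation} also fails as stated. The paper deals with this by passing to the square $\Phi^2$, which restores both~\ref{a:incr} and~\ref{a:translation}; this is not a cosmetic fix, since it forces one to track even and odd iterates separately (the stationary measures $\bm_c$ with $c\neq 0$ become period-$2$ points, not fixed points, of~$\Phi$).

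A second gap is that you never construct the space $\cQ$ and the metric $d_{\cQ}$ required by~\ref{a:space}. This cannot be all of $\cP_0$ with the weak-$*$ topology: any measure with an atom at $+\infty$, or even with an insufficiently integrable right tail, is sent by~$\Phi$ to $\ddelta_{-\infty}$, so $\Phi$ and the $\Phi_a$'s are not continuous there. The paper takes $\cQ$ to be the set of measures at finite distance from $\bm$ in the weighted sup-norm $d_{\cQ}(\mu,\nu)=\sup_x |f_\mu(x)-f_\nu(x)|\,e^{C_q|x|}$ on tail distribution functions; this choice is precisely what makes continuity of the operators and the tail estimates compatible.

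Finally, your treatment of~\ref{a:convergence} and~\ref{a:superexp} is a sketch rather than a proof. Pointwise convergence~\ref{a:everywhere} does follow from the cited attractor results, but the \emph{uniformity} in~\ref{a:convergence} does not come for free; the paper obtains it by introducing adapted upper/lower classes $(M,\delta)$ (with perturbation weighted by $e^{-C_q|x|}$ rather than a constant, exactly to avoid the collapse just mentioned) and proving a contraction statement analogous to Proposition~\ref{l:shift}. The same machinery, combined with the tail decay $f_{\bm}(x)=O(x^{q(k-1)}e^{-x^q})$ --- your intuition here is correct, and $q>1$ is indeed where this becomes super-exponential --- then yields~\ref{a:superexp} as in Lemma~\ref{l:decrease_superexp}. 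Your ``truncate children with $\xi_i\le L$'' locality argument is not how the paper handles the infinite degree: it works instead at the level of tail distribution functions via the explicit formula $f_{\Phi[\mu]}=P_k\circ I[f_\mu]$, which sidesteps any direct continuity argument for $R$ on an infinite product.
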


The structure of the paper is as follows: in Section~\ref{s:defs} we introduce RDEs and list the assumptions for Theorem~\ref{t:main}; in Section~\ref{s:cut-off} we review the cut-off method, explaining how to adapt it to prove endogeny. The key construction is carried out in Section~\ref{s:construction}. In Section~\ref{s:metrics} we show endogeny for the random metric problem (Theorem~\ref{t:applying}), and in Section~\ref{s:general} for the mean-field optimization problems (Theorem~\ref{t:TSP}).  

\section{Definitions and assumptions}\label{s:defs}

\paragraph{Recursive distributional equations --} Denote by $\cP$ the space of Radon probability measures on the compactified real line $[-\infty,\infty]$, equipped with the topology of weak convergence of measures. For the sequel, we choose and fix a metric on~$\cP$ that defines this topology (it exists, due to the compactness of $[-\infty,+\infty]$). 

Let a probability measure $m\in \cP$ be given, as well as a function $R:[-\infty,\infty]^d\times [-\infty,\infty] \to [-\infty,\infty]$ of $d+1$ variables; the number of variables $d$ can be either finite or infinite. We put them in correspondence to a transformation $\Phi$ of $\cP$, defined by
\begin{equation}\label{eq:RDE}
\Phi[\mu]=R_*(\mu^{\otimes d}\otimes m).
\end{equation}
In other words, we define $\Phi[\mu]$ to be the law of the random variable
\[
R(X_1,\ldots X_d; \xi),
\]
where the $X_i$'s have law $\mu$, $\xi$ has law $m$ and they are all independent.

\begin{dfn} 
A law $\mu$ satisfies the \emph{recursive distributional equation} associated with the relation function~$R$ and the law~$m$ if it is a fixed point for the operator $\Phi$. Alternatively, we say that the measure $\mu$ is \emph{stationary}.
\end{dfn}

Most of the time we are interested in the case where the function $R$ takes finite values as soon as all its variables are finite, and the measure $m$ belongs to the subset
$\cP_0$ of measures supported on $\R$ (that is, not charging $\pm \infty$), and after all, we want to consider 
stationary measures from~$\cP_0$, too. However, it turns out to be quite useful to allow random variables to 
take infinite values during the intermediate steps of the construction (see Section~\ref{s:cut-off} below).

\paragraph{Invariant recursive tree processes --}\label{par:IRTP} Given a function $R$ and a measure $m$ as above, it is convenient to uncover the tree structure behind the RDE. 
We follow the presentation given in~\cite[\S\,4.1]{KKT}; the reader may also consult \cite[\S\,2.3]{RDE} as a more classical reference.

Consider the rooted $d$-ary tree $(\T,\rt)$; we denote by $\|\,\cdot\,\|:\T\to\N$ the distance of a vertex to the root. Given a solution $\mu$ to the RDE associated with $R$ and $m$, we define the \emph{invariant recursive tree process} as a tree of random variables $\{(X_t,\xi_{t})\}_{t\in\T}$ such that
\begin{itemize}
\item for every $t\in\T$ the law of $\xi_t$ is $m$,
\item for every $t\in \T$ the law of $X_t$ is $\mu$,
\item for every $t\in \T$ the equality
\begin{equation}\label{vertex}
X_t=R(X_{t_1},\ldots,X_{t_d};\xi_t)
\end{equation}
holds, where the $t_i$'s are the $d$ children of $t$ in $\T$,
\item for every $n\in\N$ the random variables $\{\xi_t\}_{\|t\|< n}$ and $\{X_t\}_{\|t\|=n}$ are independent altogether (in particular the random variables $\{\xi_t\}_{t\in\T}$ are all independent).
\end{itemize}
Reversely, the existence of a tree of random variables $\{(X_t,\xi_{t})\}_{t\in\T}$ satisfying the properties above implies that the measure~$\mu$ is stationary (this is an easy consequence of equality \eqref{vertex} and independence).

\paragraph{The cut-off operators~--} For a fixed RDE associated with a relation function $R$ and a measure $m$, we introduce a family of operators $\Phi_a$, $a\in\R$, as follows: considering the new relation function
\begin{equation}\label{eq:Ra}
R_{a}(X_1,\dots,X_d;\xi):=\min(R(X_1,\dots,X_d;\xi),a),
\end{equation}
the \emph{cut-off operator} $\Phi_a$ is defined as in \eqref{eq:RDE}. That is, the operator $\Phi_a$ sends any measure $\mu\in\cP$ to the law of $R_{a}(X_1,\dots,X_d;\xi)$, where all the $X_i$'s have law $\mu$, $\xi$ has law $m$, and they are all independent.

\paragraph{Assumptions and some remarks~--}\label{s:assumptions}
In this paragraph we list the assumptions for Theorem~\ref{t:main}. Most likely, these may not be the most general assumptions that make our strategy work, but they are general enough to treat the particular problems of Theorems~\ref{t:applying} and~\ref{t:TSP}.

\begin{assumption}\label{a:incr}
The function $R$ is monotone non-decreasing in each of the first $d$ variables $X_1,\dots,X_d$. 
\end{assumption}

\begin{rem}
The monotonicity of $R$ implies that~$\Phi$ preserves the 
\emph{stochastic order}~$\preceq$ (recall that $\mu\preceq \mu'$ if there is a coupling $(X,X')$ between these measures 
such that $X\le X'$ almost surely).
\end{rem}

\begin{assumption}\label{a:translation}
The function $R$ commutes with the translations: for any $c\in\R$ and any $x_1,\dots,x_d,y\in \R$
\begin{align*}
R(x_1+c,\ldots,x_d+c;y) &= R(x_1,\ldots,x_d;y)+c.
\end{align*}
\end{assumption}

\begin{rem}
Some natural processes (for instance, the one for random metrics) are rather scale-equivariant, that is, they satisfy the relation
\[
R(\lambda z_1,\ldots,\lambda z_d;y) = \lambda R(z_1,\ldots,z_d;y).
\]
For these processes, we pass to the logarithmic scale, that is, we rather consider~$X_t:=\log Z_t$.
\end{rem}

\begin{assumption}\label{a:space}
There exists a subset $\cQ$ of $\cP$ and a metric $d_\cQ$ defined on $\cQ$ which induces a topology (non-strictly) finer than the weak-$*$ topology. The push-forward of the action of $\R$ by translations on~$[-\infty,+\infty]$ to the space of measures $\cP$ preserves~$\cQ$ and restricts to a continuous action on $(\cQ,d_\cQ)$. 

We also require that $\cQ$ contains the Dirac measures $\{\ddelta_a\}_{ a\in \R}$, and that the operators $\Phi$ and $\Phi_a$'s, $a\in\R$, preserve the space $\cQ$, and are continuous on this space (with respect to the the topology defined by the distance~$d_{\cQ}$).
\end{assumption}

From now on, we presume that such a subset $\cQ$ and the distance $d_{\cQ}$ are chosen and fixed.

\begin{assumption}\label{a:stationary}
There exists a stationary measure $\bm\in \cP_0 \cap \cQ$.
\end{assumption}

Again, consider $\bm$ to be chosen and fixed. For any $c\in\R$ we set $\bm_c$ to be the translation of $\bm$ by~$c$: if $X$ has law $\bm$, then $X+c$ has law $\bm_c$. 
\begin{rem}
The assumption~\ref{a:translation} of translation-equivariance implies that all the translates $\bm_c$ of the measure~$\bm$ are also stationary. In particular, this implies that even if such a random tree process is endogenous, in order to define the $X_t$'s one has to ``fix a scale'' by choosing one of the translates of~$\bm$: replacing it with $\bm_c$ will add $c$ to all the $X_t$'s.
\end{rem}

\begin{assumption}\label{a:everywhere}
The space $\cQ$ is contained in the basin of attraction of the set of translates of $\bm$. More precisely, for any $\mu\in \cQ$ there exists a \emph{center} $c=\tc(\mu)\in\R$ such that 
\begin{equation}\label{eq:center}
\Phi^n[\mu] \xrightarrow[]{d_{\cQ}} \bm_{\tc(\mu)} \quad\text{as } n\to\infty.
\end{equation}
\end{assumption}

\begin{assumption}\label{a:infinity}\label{a:convergence} 
There exists a compact set $\cQ_0\subset \cQ$, having $\bm$ as an interior point, such that the \emph{center function} $\tc:\cQ\to \R$ is defined (by~\eqref{eq:center}) and continuous on~$\cQ_0$, and that the convergence in~\eqref{eq:center} is uniform on~$\cQ_0$.
\end{assumption}

\begin{rem}\label{r:Dirac}
The equivariance assumption~\ref{a:translation} implies that we have an analogous convergence to the translates of $\bm$ in a neighbourhood of any Dirac measure $\ddelta_a$, $a\in\R$, and that we have $\tc(\ddelta_a)\to+\infty$ as $a\to +\infty$. Also, the function~$\tc$ is in fact continuous of $\cQ$, as any initial measure~$\mu$ after a finite number of (continuous) iterations of $\Phi$ falls in a $\tc(\mu)$-translate of $\cQ_0$, where the continuity of the center function is guaranteed by~\ref{a:convergence}. For the same reason, the convergence~\eqref{eq:center} is uniform on compact subsets of~$\cQ$, wherever it takes place pointwise (in particular, on compact subsets of~$\cQ$).
\end{rem}

\begin{rem}
In fact, as the reader will see all along the proofs, the assumption~\ref{a:everywhere} can be dropped: the assumption~\ref{a:convergence} suffices. Nevertheless, we prefer to impose it for the clarity of the exposition (otherwise one has to introduce further notations and to keep track of the domains of definition, etc.).
\end{rem}

The continuity of the center function $\tc$ implies that a cut-off of the stationary measure $\bm$
for a large value $a$ has quite a small effect on the center of the measure. Our last assumption quantifies this effect:
\begin{assumption}\label{a:superexp} 
Define the function $\Delta:\R\to \R$ by setting $\Delta(a):=-\tilde{c}(\Phi_a[\bm])$. Then $\Delta$ tends to zero superexponentially as $a\to+\infty$, that is, for any~$\beta>0$
\[
\Delta(a)= o\left (e^{-\beta a}\right ).
\]
\end{assumption}

\begin{rem}
The asymmetry in the assumption~\ref{a:superexp} is due to the fact that we are using the \emph{upper} cut-off; some functions $R$ may require the \emph{lower} cut-off and thus the corresponding reversion of assumptions and arguments.
\end{rem}

\section{The cut-off method revised}\label{s:cut-off}

\paragraph{Earlier appearances --}
We begin this part by recalling some generalities about the cut-off method. It was introduced in~\cite[\S 4.3]{KKT} in order to find solutions for some translation-equivariant functions~$R$ (corresponding to random metrics on hierarchical graphs defined by multiplicative cascades). A main obstacle was the drift: in fact, one had to consider a family of RDEs
\begin{equation}\label{eq:s}
X\stackrel{d}{=}R(X_1,\ldots,X_d;\xi)+s,
\end{equation}
parametrized by the (also unknown!) \emph{drift constant}~$s$. Among these parameters, there is at most one $s=s_{cr}$ for which \eqref{eq:s} possesses a ``physically reasonable'' solution: for $s<s_{cr}$ the iterations of the associated $\Phi$ make every initial measure converge to $-\infty$, whereas for $s>s_{cr}$, the convergence is to $+\infty$. 

To avoid this problem, the relation function $R$ is replaced by the cut-off function $R_a$ defined by~\eqref{eq:Ra}, where a cut-off is applied, and consequentially $\Phi$ is replaced by $\Phi_a$.
Then the transformation $\Phi_a$ sends any measure to a measure supported on $[-\infty,a]$, and one can find a stationary measure for the new process as the (stochastically monotone) limit of iterations $\Phi_a^n[\ddelta_{+\infty}]$ (cf.~\cite[Lemma~4]{RDE}). This limit is non-trivial (that is, not concentrated at $-\infty$) if and only if $s>s_{cr}$. One then finds a stationary measure for the initial process (so $s=s_{cr}$) by taking a non-trivial ``diagonal'' limit as $s\searrow s_{cr}$ and $a\to +\infty$ (the latter ensures that the effect of the cut-off disappears).

\paragraph{Cut-off method --}
Similar arguments of monotonicity can be applied to ensure the endogeny of the initial process. The key variation comes from noticing that the cut-offs that are made in the past may be done at different values for different times. 

First, for the simplicity of the exposition, we will impose additional assumptions. Namely, assume that
\begin{itemize}
\item the dimension $d$ is finite;
\item the relation function $R$ is continuous in $X_1,\dots,X_d$;
\item the relation function takes finite values whenever all its arguments are finite.
\end{itemize}
All these assumptions hold in the setting of random metrics (Section~\ref{s:metrics}) and the exposition under these assumptions 
clarifies the main ideas. However, they are unnecessary for establishing our main theorem, and we will remove them at the end of this section; see Proposition~\ref{p:removing}.

\smallskip

Consider any sequence $\bA=\{a_n\}_{n=1}^{\infty}$ of cut-off values $a_n\in(-\infty,+\infty]$, and the corresponding sequence of relation functions $R_{a_n}$ (where $R_{+\infty}$ is nothing else but the initial recursion relation~$R$). We are going to construct (cf.~\cite[\S\,4.3]{KKT}) a random tree process $\{(X^\bA_t,\xi_t)\}_{t\in\T}$, whose variables verify
\begin{equation}\label{eq:R-a}
X^{\bA}_t=R_{a_{\|t\|}}(X^{\bA}_{t_1},\ldots,X^{\bA}_{t_d};\xi_t)
\end{equation}
instead of \eqref{vertex}; observe that the recursion relation changes with the distance to the root.

\begin{dfn}
Let a sequence $\bA=\{a_n\}_{n=1}^{\infty}$, $a_n\in(-\infty,+\infty]$, be given. For any family of values $\{\xi_t\}_{t\in\T}$ and any $k\in\N$, we recursively define $\{X_t^{\bA,k}\}_{t\in\T}$ as
\begin{equation}\label{RTP_cutoff}
\begin{cases}
X_t^{\bA,k}=+\infty & \text{if }\|t\|\ge k,\\
X^{\bA,k}_t=R_{a_{\|t\|}}(X^{\bA,k}_{t_1},\ldots,X^{\bA,k}_{t_d};\xi_t)& \text{if }\|t\|<k.
\end{cases}
\end{equation}
\end{dfn}

Directly from this definition, assumption~\ref{a:incr}, and using the assumed continuity of the function~$R$, we have:
\begin{prop}\label{p:limit_cutoff}
Let $\bA=\{a_n\}_{n=1}^{\infty}$, $a_n\in(-\infty,+\infty]$, be a sequence and let $\{\xi_t\}_{t\in\T}$ be a family of real numbers. For any fixed $t\in\T$, the sequence $\{X_t^{\bA,k}\}_{k=1}^{\infty}$ is non-increasing. In particular, the limit
\begin{equation}\label{e:limit_decreas}
X^{\bA}_t:=\lim_{k\to\infty}X_t^{\bA,k}
\end{equation}
exists (though it may be infinite) and satisfies the relation~\eqref{eq:R-a}.

When some $a_n$ is finite, then for any $t\in\T$ such that $\|t\|\le n$, the limit $X_t^\bA$ is not $+\infty$. In particular, if there are infinitely many finite $a_n$'s, all the limits $X_t^\bA$ are not $+\infty$ (though they still can be~$-\infty$).

Moreover, for a fixed family $\{\xi_t\}_{t\in\T}$, this limit is monotone on $\bA$: if for $\bA=\{a_n\}$ and $\bA'=\{a'_n\}$ we have $a_n\le a'_n$ coordinate-wise then for any $t\in\T$, we have $X_t^{\bA}\le X_t^{\bA'}$.
\end{prop}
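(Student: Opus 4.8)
The plan is to establish the assertions one at a time — monotonicity of $k\mapsto X_t^{\bA,k}$ together with existence of the limit, then the recursion~\eqref{eq:R-a}, then the finiteness statement, and finally monotonicity in $\bA$ — each by a short \emph{downward induction over the levels of the tree}: for a fixed $k$ one argues first for the vertices with $\|t\|\ge k$ and then decreases $\|t\|$ step by step down to the root. The only structural inputs needed are Assumption~\ref{a:incr} (which makes each cut-off relation $R_{a_n}=\min(R,a_n)$ non-decreasing in its first $d$ arguments), the continuity of $R$ and its finiteness on finite inputs (both among the standing assumptions of this section), and, for the finiteness statement alone, Assumption~\ref{a:translation}.

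First I would prove that $k\mapsto X_t^{\bA,k}$ is non-increasing. Fixing $k$, I would show by induction on $\|t\|$ descending from $\|t\|\ge k$ down to $\|t\|=0$ that $X_t^{\bA,k+1}\le X_t^{\bA,k}$: for $\|t\|\ge k+1$ both members equal $+\infty$, for $\|t\|=k$ one has $X_t^{\bA,k}=+\infty\ge X_t^{\bA,k+1}$, and the inductive step for $\|t\|<k$ follows from~\eqref{RTP_cutoff} and the monotonicity of $R_{a_{\|t\|}}$ applied to the inequalities between the children's values. A non-increasing sequence in $[-\infty,+\infty]$ always converges, so $X_t^{\bA}:=\lim_k X_t^{\bA,k}$ is well defined. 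To check~\eqref{eq:R-a} I would write $X_t^{\bA,k}=R_{a_{\|t\|}}(X_{t_1}^{\bA,k},\dots,X_{t_d}^{\bA,k};\xi_t)$ for $k>\|t\|$ and let $k\to\infty$, using that the $d$ arguments converge (monotonically) in the compact space $[-\infty,+\infty]$ and that $R$, hence $R_{a_{\|t\|}}=\min(R,a_{\|t\|})$, is continuous in its first $d$ variables.

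Next I would treat the finiteness statement, which is the step I expect to be the most delicate, since it mixes the order with translation-equivariance while one must keep track of $\pm\infty$. Assuming $a_n<+\infty$, I would prove, again by downward induction on $\|t\|$, that for every $t$ with $\|t\|\le n$ there is a finite constant $b_t$ — depending only on $a_n$ and on finitely many of the $\xi$'s — such that $X_t^{\bA,k}\le b_t$ for all $k>n$; passing to the limit then gives $X_t^{\bA}\le b_t<+\infty$. For $\|t\|=n$ one takes $b_t=a_n$, since $X_t^{\bA,k}\le a_n$ once $k>n$; and if $b_{t_1},\dots,b_{t_d}$ are finite bounds for the children of a vertex $t$ with $\|t\|<n$, then, setting $M:=\max_i b_{t_i}<+\infty$, the monotonicity of $R$ together with Assumption~\ref{a:translation} gives
\[
X_t^{\bA,k}\ \le\ R\bigl(X_{t_1}^{\bA,k},\dots,X_{t_d}^{\bA,k};\xi_t\bigr)\ \le\ R(M,\dots,M;\xi_t)\ =\ R(0,\dots,0;\xi_t)+M\ =:\ b_t ,
\]
which is finite because $R$ takes finite values on finite inputs. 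The case of a sequence $\bA$ with infinitely many finite entries then follows by choosing, for a given $t$, some $n>\|t\|$ with $a_n<+\infty$.

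Finally I would prove monotonicity in $\bA$. If $a_n\le a'_n$ for every $n$, then $R_{a_n}=\min(R,a_n)\le\min(R,a'_n)=R_{a'_n}$ pointwise. Fixing $k$ and inducting on $\|t\|$ downward as before — with base case $\|t\|\ge k$ the trivial equality $+\infty=+\infty$ — at a vertex $t$ with $\|t\|<k$ the monotonicity of $R_{a_{\|t\|}}$ and the inductive hypothesis on the children give $X_t^{\bA,k}\le R_{a_{\|t\|}}(X_{t_1}^{\bA',k},\dots;\xi_t)\le R_{a'_{\|t\|}}(X_{t_1}^{\bA',k},\dots;\xi_t)=X_t^{\bA',k}$; letting $k\to\infty$ yields $X_t^{\bA}\le X_t^{\bA'}$ for every $t$. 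All steps other than the finiteness one are routine inductions; the finiteness argument is the only place where a genuinely new ingredient — the translation-equivariance — enters.
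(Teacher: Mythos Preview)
Your proof is correct and matches the paper's intent; the paper itself gives no details, stating only that the proposition follows ``directly from this definition, assumption~\ref{a:incr}, and using the assumed continuity of the function~$R$''. One small simplification: the finiteness step does not require translation-equivariance~\ref{a:translation}. Once the children of $t$ satisfy $X_{t_i}^{\bA,k}\le b_{t_i}<+\infty$, monotonicity alone gives $X_t^{\bA,k}\le R(b_{t_1},\dots,b_{t_d};\xi_t)$, and this right-hand side is already finite by the standing assumption of this section that $R$ takes finite values whenever all its arguments are finite --- no detour through $M=\max_i b_{t_i}$ and~\ref{a:translation} is needed. So the finiteness step is in fact just as routine as the others.
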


When the family $\{\xi_t\}_{t\in\T}$ is considered as a family of random variables (i.i.d.~of law $m$), the definition \eqref{RTP_cutoff} makes the $X_t^{\bA,k}$'s random variables. By the symmetry in the construction, all the laws of the $X_t^{\bA,k}$'s coincide
at any fixed depth $\|t\|=n$; we denote this law by $\mu_{n}^{\bA,k}$. It follows from~\eqref{RTP_cutoff} that these laws satisfy (the non-stationary) RDEs:
\begin{equation}\label{measures_cutoff}
\begin{cases}
\mu_{k}^{\bA,k}=\ddelta_{+\infty},\\
\mu_n^{\bA,k}=\Phi_{a_{n}}[\mu^{\bA,k}_{n+1}]& \text{if }n<k.
\end{cases}
\end{equation}
Passing to the limit in \eqref{measures_cutoff}, as $k\to\infty$, we get that the laws $\mu_{\|t\|}^\bA$'s of the  $X_t^{\bA}$'s from Proposition~\ref{p:limit_cutoff} also satisfy the RDEs
\begin{equation}\label{e:RDE-bA}
\mu_n^{\bA}=\Phi_{a_{n}}[\mu^{\bA}_{n+1}].
\end{equation}
By construction, the RTP $\{(X_t^{\bA},\xi_t)\}_{t\in\T}$ is \emph{endogenous} (cf.~\cite[Lemma~15]{RDE}).

\paragraph{Removing the cut-offs --}

The next step is to start \emph{removing} the cut-offs. The non-stationary process, constructed with Proposition~\ref{p:limit_cutoff}, is endogenous, but the cut-offs alter the recurrence relation and the distribution of the~$X_t$'s. The second step is to start removing the cut-offs one by one, starting from above, and thus constructing a new family of endogenous RTPs (this time an increasing one). Explicitly:
\begin{dfn}
Let a sequence $\bA=\{a_n\}_{n=1}^{\infty}$, $a_n\in(-\infty,+\infty]$,  be given. Define a family of new sequences $\bA^{(\ell)}=\{a_n^{(\ell)}\}$ by
\[
a_n^{(\ell)}=\begin{cases}
a_n,& \text{if }n\ge \ell,\\
+\infty, &\text{if }n<\ell.
\end{cases}
\]
\end{dfn}
The RDEs \eqref{e:RDE-bA} for the new sequences are
\begin{equation}\label{eq:RDE-bA-l}
\mu_n^{\bA^{(\ell)}}=\begin{cases}
\Phi_{a_n}[\mu_{n+1}^{\bA^{(\ell)}}],& \text{if }n\ge \ell,\\
\Phi[\mu_{n+1}^{\bA^{(\ell)}}],
&\text{if }n<\ell.
\end{cases}
\end{equation}
Notice that when $n<\ell$, one has
\begin{equation}\label{eq:RDE-bA-l2}
\mu_n^{\bA^{(\ell)}}=\Phi^{\ell-n}[\mu_\ell^{\bA^{(\ell)}}].
\end{equation}
The following proposition is deduced from the monotonicity statement in Proposition~\ref{p:limit_cutoff}, and again from the continuity assumption for the function~$R$:
\begin{prop}
Let $\bA=\{a_n\}_{n=1}^{\infty}$, $a_n\in(-\infty,+\infty]$, 
be a sequence 
and let $\{\xi_t\}_{t\in\T}$ be a family of real numbers. For any fixed $t\in\T$, the sequence $\{X_t^{\bA^{(\ell)}}\}_{\ell=1}^{\infty}$ is non-decreasing. In particular the limit
\begin{equation}\label{eq:limit_removed}
\tX^{\bA}_t:=\lim_{\ell\to\infty}X_t^{\bA^{(\ell)}}
\end{equation}
exists, and satisfies the recurrence relation
\begin{equation}\label{eq:tilde-R-a}
\tX^{\bA}_t=R(\tX^{\bA}_{t_1},\ldots,\tX^{\bA}_{t_d};\xi_t)
\end{equation}
\end{prop}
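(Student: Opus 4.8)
The plan is to mirror the argument used for the previous proposition (the one constructing $X_t^{\bA}$ as a decreasing limit over $k$), replacing the monotonicity in $k$ by the monotonicity in $\ell$ that is already available from the last sentence of Proposition~\ref{p:limit_cutoff}. First I would observe that, for a fixed sequence $\bA$ and a fixed $t\in\T$, the sequences $\bA^{(\ell)}=\{a_n^{(\ell)}\}$ are coordinate-wise non-decreasing in $\ell$: indeed $a_n^{(\ell)}\le a_n^{(\ell+1)}$ for every $n$, since increasing $\ell$ only turns more of the initial entries from their finite value $a_n$ into $+\infty$, and leaves the tail entries unchanged. Applying the monotonicity-in-$\bA$ statement of Proposition~\ref{p:limit_cutoff} then gives $X_t^{\bA^{(\ell)}}\le X_t^{\bA^{(\ell+1)}}$ for all $\ell$, so $\{X_t^{\bA^{(\ell)}}\}_{\ell\ge 1}$ is non-decreasing. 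Hence the limit $\tX^{\bA}_t$ in~\eqref{eq:limit_removed} exists in $[-\infty,+\infty]$.

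Next I would verify that the limit satisfies the un-cut-off recursion~\eqref{eq:tilde-R-a}. Fix $t$ with $\|t\|=m$. For any $\ell>m$, the sequence $\bA^{(\ell)}$ has $a_m^{(\ell)}=+\infty$, so $R_{a_m^{(\ell)}}=R$, and the defining relation~\eqref{eq:R-a} for the process $X^{\bA^{(\ell)}}$ at the vertex $t$ reads
\[
X_t^{\bA^{(\ell)}}=R\bigl(X_{t_1}^{\bA^{(\ell)}},\ldots,X_{t_d}^{\bA^{(\ell)}};\xi_t\bigr).
\]
Now let $\ell\to\infty$. The left-hand side converges to $\tX^{\bA}_t$ by definition; on the right-hand side, each $X_{t_i}^{\bA^{(\ell)}}$ converges monotonically to $\tX^{\bA}_{t_i}$, and by the continuity of $R$ in its first $d$ arguments (one of the simplifying assumptions in force in this part of the paper, and, more robustly, by the monotone convergence together with the monotonicity of $R$ from assumption~\ref{a:incr}, which forces one-sided continuity along monotone sequences even when values are infinite) the right-hand side converges to $R(\tX^{\bA}_{t_1},\ldots,\tX^{\bA}_{t_d};\xi_t)$. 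This yields~\eqref{eq:tilde-R-a}.

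The step I expect to require the most care is the passage to the limit in $R$ when some of the $\tX^{\bA}_{t_i}$ are infinite, since $R$ is only assumed continuous on finite arguments. I would handle this exactly as in the analogous passage for Proposition~\ref{p:limit_cutoff}: monotone limits interact well with monotone functions, so $\lim_\ell R(\dots)=R(\lim_\ell\dots)$ holds along the non-decreasing sequences $X_{t_i}^{\bA^{(\ell)}}\uparrow \tX^{\bA}_{t_i}$ provided one interprets $R$ at infinite arguments as the appropriate monotone limit — which is precisely the convention implicit in~\eqref{RTP_cutoff} and~\eqref{e:limit_decreas}. A minor additional remark worth including is that the monotonicity in $\ell$ also passes to the laws: from~\eqref{eq:RDE-bA-l} and~\eqref{eq:RDE-bA-l2} the measures $\mu_n^{\bA^{(\ell)}}$ form a stochastically monotone family in $\ell$, consistent with the pointwise statement, so that the limiting laws satisfy the stationary-type relations obtained by sending $\ell\to\infty$ in~\eqref{eq:RDE-bA-l}. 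This is not strictly needed for the proposition as stated but smooths the transition to the subsequent argument.
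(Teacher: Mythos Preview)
Your proof is correct and follows exactly the approach the paper indicates: the paper states only that the proposition ``is deduced from the monotonicity statement in Proposition~\ref{p:limit_cutoff}, and again from the continuity assumption for the function~$R$'', and you have faithfully unpacked both ingredients. One small caution: your parenthetical claim that monotonicity of $R$ alone ``forces one-sided continuity along monotone sequences'' is not true in general (monotone functions can jump), and indeed this is precisely why the paper needs the separate Proposition~\ref{p:removing} to handle the case without the continuity assumption; but since the simplifying continuity hypothesis is in force here, your main argument stands.
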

Observe that the limits \eqref{eq:limit_removed} can possibly be infinite, even if we assume that there are infinitely many finite $a_n$'s.

As above, when the family $\{\xi_t\}_{t\in\T}$ is considered as a family of random variables (i.i.d.~of law $m$), the definition \eqref{eq:limit_removed} makes the $\tX_t^{\bA}$'s random variables. For any $k\in\N$, the limit
\begin{equation}\label{eq:limit_removed_meas}
\tmu_{k}^{\bA}=\lim_{\ell\to\infty}\mu_k^{\bA^{(\ell)}}
\end{equation}
is the law of the $\tX_t^{\bA}$'s with $\|t\|=k$. They satisfy the RDEs
\[
\tmu^{\bA}_k=\Phi[\tmu^{\bA}_{k+1}],
\]
though they can be trivial solutions (namely~$\tmu^{\bA}_k=\ddelta_{+\infty}$ or~$\tmu^{\bA}_k=\ddelta_{-\infty}$).

Nonetheless, by construction, for any sequence $\bA$, \emph{the process $\{\tX^{\bA}_t\}_{t\in\T}$ is endogenous}. Hence, Theorem~\ref{t:main} will be proved -- under our additional assumptions -- as soon as we establish the following:
\begin{prop}\label{p:main}
Let $\bm$ be a solution to a RDE
\begin{equation}\label{eq:RDE_simple}
\bm=\Phi[\bm],
\end{equation}
given by the assumption~\ref{a:stationary}, where the recurrence relation $R$ and the measure $m$ satisfy all the assumptions~\ref{a:incr}--\ref{a:superexp}.
Then there exists a sequence~$\bA$ such that $\tmu^{\bA}_n=\bm$ for any~$n$. 

Moreover, as a technical conclusion, for the constructed sequence~$\bA$ we have $\mu^{\bA^{(\ell)}}_n\in \cQ$ for any~$n$ and~$\ell$.
\end{prop}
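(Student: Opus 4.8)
The plan is to construct the sequence $\bA=\{a_n\}$ by a careful iterative choice of cut-off values, ensuring that the accumulated drift introduced by the cut-offs is \emph{exactly} compensated so that the limit measures $\tmu^{\bA}_n$ coincide with $\bm$. The heuristic is this: cutting off at level $n$ at value $a_n$ shifts the center of the measure downwards by roughly $\Delta(a_n - (\text{current center}))$ (using assumption~\ref{a:translation} to translate $\Delta$ to any scale); by assumption~\ref{a:superexp} this shift is superexponentially small in $a_n$. Meanwhile, removing cut-offs (passing from $\bA^{(\ell)}$ to $\bA^{(\ell+1)}$) applies iterations of $\Phi$, which by assumption~\ref{a:convergence} bring measures back towards the translates of $\bm$ at a controlled rate. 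We want the total downward drift, summed over all levels, together with the upward ``pull'' of the $\Phi$-iterations, to conspire so that $\tmu^{\bA}_n = \bm$ on the nose.

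The key steps, in order, would be: \emph{(i)} First set up the bookkeeping for centers. For a sequence $\bA$ that is eventually equal to $\bm$'s ``natural scale'', track the centers $\tc(\mu^{\bA^{(\ell)}}_n)$ and show, using the RDEs \eqref{eq:RDE-bA-l}, \eqref{eq:RDE-bA-l2} together with assumptions~\ref{a:everywhere}--\ref{a:convergence}, that $\mu^{\bA^{(\ell)}}_n$ stays in a fixed compact neighbourhood $\cQ_0$ of $\bm$ (after appropriate translation), provided the $a_n$ grow fast enough — this gives the ``technical conclusion'' $\mu^{\bA^{(\ell)}}_n\in\cQ$. \emph{(ii)} Then I would define a real-valued map that takes a candidate tail $(a_N, a_{N+1},\dots)$ to the resulting center $\tc(\tmu^{\bA}_0)$ (or the deficit from $\bm$), show it is continuous and monotone in each $a_n$ by Proposition~\ref{p:limit_cutoff} and the continuity of $\tc$, and that as all $a_n\to+\infty$ the deficit tends to $0$ while for finite $a_n$ it is strictly positive. \emph{(iii)} A diagonal/compactness argument (or an inductive construction choosing $a_1$, then $a_2$, \dots, each time adjusting to cancel the running deficit, using superexponential smallness to guarantee convergence of the correction series) then produces the desired $\bA$ with $\tc(\tmu^{\bA}_n)=\tc(\bm)$ for all $n$. \emph{(iv)} Finally, since $\tmu^{\bA}_n$ is stationary for $\Phi$, lies in $\cQ$, and has the same center as $\bm$, assumption~\ref{a:everywhere} (the basin-of-attraction/uniqueness up to translation) forces $\tmu^{\bA}_n=\bm$.

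The main obstacle I expect is step \emph{(iii)}: controlling the \emph{interaction} between removing a cut-off at a low level (which re-runs many iterations of $\Phi$ and can amplify or damp the small drifts coming from the still-present cut-offs at higher levels) and the magnitudes of those drifts. One must show the corrections do not snowball — i.e.\ that choosing $a_n$ large enough makes the effect of the $n$-th cut-off on the final center of order $o(e^{-\beta a_n})$ \emph{uniformly} over all the removal stages $\ell\le n$, not just in the limit. This is exactly where assumption~\ref{a:superexp} is calibrated to beat any fixed exponential distortion rate coming from the (at worst exponential) sensitivity of iterated $\Phi$ near $\bm$; making that quantitative — presumably via a Gronwall-type estimate on $\sum_n \Delta(a_n-c_n)e^{\beta n}$ being summable and choosing $a_n\gg \beta n/\text{(exponent in \ref{a:superexp})}$ — is the technical heart of the argument. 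A secondary subtlety is ensuring the constructed measures never escape to $\pm\infty$ during the construction, which is handled by keeping infinitely many $a_n$ finite (so Proposition~\ref{p:limit_cutoff} rules out $+\infty$) and by the lower bound coming from $\bm\preceq\mu^{\bA^{(\ell)}}_n$ being impossible to violate downward once centers are pinned.
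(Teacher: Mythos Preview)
Your plan misidentifies the core difficulty. The essential requirement for $\tmu_n^{\bA}=\bm$ is that $\mu_\ell^{\bA}$ converge to~$\bm$ (in~$\cQ$) as $\ell\to\infty$; once this holds, the relation $\mu_n^{\bA^{(\ell)}}=\Phi^{\ell-n}[\mu_\ell^{\bA}]$ together with the uniform convergence of~\ref{a:convergence} gives $\tmu_n^{\bA}=\bm$ directly, with no ``exact cancellation'' needed. But $\mu_\ell^{\bA}$ depends only on the \emph{tail} $(a_\ell,a_{\ell+1},\dots)$, so one must arrange that \emph{every} tail of~$\bA$, applied to $\ddelta_{+\infty}$, lands in a shrinking neighbourhood of~$\bm$. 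Your inductive scheme ``choose $a_1$, then $a_2,\dots$'' runs in the wrong direction for this, and the heuristic ``take $a_n$ growing fast enough'' actively defeats it: if all the $a_n$ are very large then the first effective step $\Phi_{a_{k-1}}[\ddelta_{+\infty}]$ is essentially $\ddelta_{a_{k-1}}$, whose center $\tc(\ddelta_{a_{k-1}})\approx a_{k-1}$ is huge, and the subsequent large cut-offs are far too weak to pull it back toward~$0$ (iterating $\Phi$ alone preserves the center). Your Gronwall-type sum only controls drift \emph{once you are already near~$\bm$}; it says nothing about reaching that neighbourhood from $\ddelta_{+\infty}$ for each tail, and it is exactly there that the two requirements --- cut-offs strong enough to pull the center in, yet weak enough not to overshoot --- compete.

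The paper resolves this with a \emph{block} construction absent from your plan. A $(\delta,\delta';\eps,\eps')$-block is a finite string of cut-offs that simultaneously (a)~sends $\ddelta_{+\infty}$ to within~$\eps$ of~$\bm$, and (b)~sends anything $\eps'$-close to $\bm_{c'}$ with $c'\in(-\delta',0)$ to within~$\eps$ of some $\bm_c$ with $c\in(-\delta,0)$. One then sets $\bA=\bB_1\bB_2\cdots$ with parameters $\delta_n,\eps_n\to 0$; property~(a) for the outermost block and property~(b) inductively for the inner ones force every tail to land near~$\bm$. The real work is proving such blocks exist: a single large cut-off $b_0$ (followed by many $\Phi$-iterates) brings $\ddelta_{+\infty}$ near some $\bm_{c_0}$ with $c_0>0$; then $N$ repetitions of a smaller cut-off $b_1$ (each followed by many $\Phi$-iterates) walk the center from $c_0$ down past~$0$. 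Assumption~\ref{a:superexp} enters as the ratio estimate $\Delta(b_1+\delta'')/\Delta(b_1)\to 0$, which lets one choose $b_1$ and $N$ so that this walk covers the distance~$c_0$ while its side-effect on inputs already near $\bm_{c'}$, $c'\in(-\delta',0)$, stays below $\delta-\delta'$. That comparison of displacements at two different heights is the genuine role of~\ref{a:superexp} here, not a summability condition of the kind you propose.
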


Roughly speaking, to prove Proposition~\ref{p:main}, we have to choose a sequence $\bA=\{a_n\}$ of cut-offs such that any tail $\{a_n\}_{n=\ell+1}^{\infty}$ suffices to bring initially-infinite values to the core, while the initial part $ \{a_n\}_{n=1}^{\ell}$ should not push it too far towards $-\infty$.

Before getting into the proof of Proposition~\ref{p:main}, we conclude this paragraph by explaining how to get rid of the continuity assumptions (that could be a problem when there is an infinite number of variables).  In particular the reader will see how the last conclusion of Proposition~\ref{p:main} is employed (this is the only step where we need it).
 Namely, we have the following:
\begin{prop}\label{p:removing}~Under the assumptions~\ref{a:incr}--\ref{a:superexp} the following statements hold:
\begin{itemize}
\item Assume that the sequence $\bA$ is such that all the measures $\mu_n^{\bA}$'s belong to the domain~$\cQ$. Then the relation~\eqref{eq:R-a} is satisfied almost surely.
\item Assume that the sequence $\bA$ is such that all the measures $\mu_n^{\bA^{(\ell)}}$, $\tilde{\mu}_n^{\bA}$'s  belong to the domain~$\cQ$. Then the relation~\eqref{eq:tilde-R-a} is satisfied almost surely.
\end{itemize}
\end{prop}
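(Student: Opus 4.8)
The plan is to reduce both statements to a single principle: the random tree process $\{(X_t,\xi_t)\}$ whose laws at each level lie in $\cQ$ and satisfy the appropriate (non-stationary) RDEs is, up to a null set, the process obtained by the pointwise limits \eqref{e:limit_decreas} or \eqref{eq:limit_removed}, so the recursion holds a.s.\ in the limiting process simply because it holds at every finite stage. Concretely, for the first bullet I would argue as follows. For each $k$ the finite-stage variables $X_t^{\bA,k}$ are defined by the honest recursion \eqref{RTP_cutoff}, so \eqref{eq:R-a} holds \emph{exactly} (deterministically, for every realization of $\{\xi_t\}$) whenever $\|t\|<k$. Passing to $k\to\infty$ and using Proposition~\ref{p:limit_cutoff}, the variables $X_t^\bA$ are monotone limits and satisfy \eqref{eq:R-a} \emph{in the cases where $R$ is continuous at the relevant arguments}. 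The only issue, when $d=\infty$ or $R$ is not everywhere continuous, is that $R_{a_{\|t\|}}\big(\lim_k X_{t_i}^{\bA,k}\big)$ need not equal $\lim_k R_{a_{\|t\|}}\big(X_{t_i}^{\bA,k}\big)$. So the content is: the distributional identity $\mu_n^\bA=\Phi_{a_n}[\mu_{n+1}^\bA]$ (which we \emph{do} know, as \eqref{e:RDE-bA}), together with $\mu_n^\bA,\mu_{n+1}^\bA\in\cQ$, upgrades to the a.s.\ pointwise identity \eqref{eq:R-a}.

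The mechanism for that upgrade is a standard coupling/consistency argument. Build a random tree process $\{(Y_t,\xi_t)\}$ realizing the laws $\mu_n^\bA$ level by level: start at level $n$ with variables of law $\mu_{n+1}^\bA$ (for the next level use $\mu_{n+2}^\bA$, etc.), independent fresh $\xi_t$'s of law $m$, and \emph{define} $Y_t:=R_{a_{\|t\|}}(Y_{t_1},\dots,Y_{t_d};\xi_t)$ going up. Because $\Phi_{a_n}$ preserves $\cQ$ and $\mu_n^\bA=\Phi_{a_n}[\mu_{n+1}^\bA]$, this is a consistent family of finite-dimensional distributions, so by Kolmogorov extension there is such a process, in which \eqref{eq:R-a} holds by construction for \emph{every} $t$. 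It remains to see that $\{Y_t\}$ and $\{X_t^\bA\}$ have the same law as processes (not merely the same one-dimensional marginals): this follows since both are the $k\to\infty$ limits of the same finite-stage recursions $X_t^{\bA,k}$ — in $\{Y_t\}$ one can likewise truncate and recover \eqref{RTP_cutoff}, and the monotone convergence is almost sure — or, more cleanly, both processes' finite-dimensional distributions are determined by the recursion below a suitable level together with the laws $\mu_n^\bA\in\cQ$ at that level, hence agree. Consequently \eqref{eq:R-a} transports from $\{Y_t\}$ to $\{X_t^\bA\}$, a.s. The second bullet is identical: replace $\bA$ by $\bA^{(\ell)}$ at the intermediate levels (using \eqref{eq:RDE-bA-l}, \eqref{eq:RDE-bA-l2} and the hypothesis $\mu_n^{\bA^{(\ell)}}\in\cQ$), pass $\ell\to\infty$ via \eqref{eq:limit_removed}, \eqref{eq:limit_removed_meas}, and use that $\tmu_n^\bA\in\cQ$ and $\tmu_n^\bA=\Phi[\tmu_{n+1}^\bA]$ to run the same consistency argument with $\Phi$ in place of $\Phi_{a_n}$, obtaining \eqref{eq:tilde-R-a} a.s.

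The step I expect to be the main obstacle — or at least the one requiring care — is justifying that the \emph{process-level} law of $\{X_t^\bA\}$ is indeed the one built by Kolmogorov extension from the $\mu_n^\bA$'s, i.e.\ that the limiting construction does not secretly introduce extra dependence or lose the recursion on a positive-measure set. The clean way is to note that for each fixed finite subtree $S$ and each level $n$ below $S$, the joint law of $\{X_t^{\bA,k}\}_{t\in S}$ is, for $k>n$, an explicit measurable function of $\{\xi_t\}$ and of the level-$n$ variables of law $\mu_n^{\bA,k}$; letting $k\to\infty$ the level-$n$ laws converge to $\mu_n^\bA$ (weak-$*$, hence in $d_\cQ$ is not needed here) and the finite recursion passes to the limit monotonically, giving that $\{X_t^\bA\}_{t\in S}$ has the Kolmogorov-consistent law. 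Here is precisely where the technical conclusion of Proposition~\ref{p:main} — that all these intermediate measures lie in $\cQ$ — is used: it guarantees the operators $\Phi_{a_n}$, $\Phi$ act and are well-behaved on exactly the measures that appear, so the consistency of the finite-dimensional distributions (and hence the extension) is legitimate. Monotonicity (Assumption~\ref{a:incr}) is what makes all the limits exist a.s.\ and lets us interchange limit and recursion in the direction we need.
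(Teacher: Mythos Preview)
Your Kolmogorov-extension strategy has a circularity at the crucial step. You build a process $\{Y_t\}$ in which the recursion holds by fiat, and then want to show that $\{X_t^{\bA}\}$ has the same finite-dimensional distributions. But your justification --- ``both processes' finite-dimensional distributions are determined by the recursion below a suitable level together with the laws $\mu_n^{\bA}$ at that level'' --- presupposes for $\{X_t^{\bA}\}$ exactly what you are trying to prove. Without knowing that \eqref{eq:R-a} holds for $X^{\bA}$, the joint law of $(X_t^{\bA})_{t\in S}$ is \emph{not} the push-forward of the level-$n$ marginals by the recursion map; it is the a.s.\ limit of $(X_t^{\bA,k})_{t\in S}$, and passing that limit through the (possibly discontinuous, infinite-dimensional) map built from $R$ is the whole difficulty. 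The variant ``the finite recursion passes to the limit monotonically'' only yields an inequality, not an identification of laws. (A smaller but related issue: you invoke \eqref{e:RDE-bA} as ``known'', but that identity was obtained in the paper under the extra continuity hypotheses you are meant to be removing; it must itself be re-derived here.)

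The paper's proof avoids this circle with a two-line trick you almost reach but do not use. Monotonicity of $R$ (assumption~\ref{a:incr}) applied to the decreasing sequence $X_t^{\bA,k}$ gives, for every realisation,
\[
X_t^{\bA}=\lim_k R_{a_{\|t\|}}(X_{t_1}^{\bA,k},\dots;\xi_t)\ \ge\ R_{a_{\|t\|}}(X_{t_1}^{\bA},\dots;\xi_t),
\]
an almost-sure \emph{inequality}. Taking laws, this reads $\Phi_{a_n}[\mu_{n+1}^{\bA}]\preceq \mu_n^{\bA}$. Now the hypothesis $\mu_n^{\bA}\in\cQ$ is used --- not for Kolmogorov consistency, but because assumption~\ref{a:space} makes $\Phi_{a_n}$ \emph{continuous on $\cQ$}: from $\mu_n^{\bA,k}=\Phi_{a_n}[\mu_{n+1}^{\bA,k}]$ and $\mu_{n+1}^{\bA,k}\to\mu_{n+1}^{\bA}$ one gets $\mu_n^{\bA}=\Phi_{a_n}[\mu_{n+1}^{\bA}]$. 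An almost-sure inequality between two random variables with the same law is an almost-sure equality; this is \eqref{eq:R-a}. The second bullet is identical with the direction of the inequality reversed (the sequence in $\ell$ is increasing). If you want to salvage your coupling idea, note that defining $Y_t^{(n)}$ on the \emph{same} probability space from the limiting level-$n$ variables $X_{t_j}^{\bA}$ and comparing with $X_t^{\bA,k}$ gives $X_t^{\bA}\ge Y_t^{(n)}$ a.s.; but to conclude equality you still need the marginal-law identity above, so you end up at the paper's argument anyway.
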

\begin{proof}
For any given set of values of $\{\xi_t\}$ the sequences $\{X_t^{\bA,k}\}$ are monotonous non-increasing. Hence, passing to the limit in~\eqref{RTP_cutoff} and using the monotonicity of the function $R$, we get 
\begin{multline}\label{eq:m-lim}
X_t^{\bA} = \lim_{k\to\infty} X_t^{\bA,k} = \lim_{k\to\infty} R_{a_{\|t\|}}(X^{\bA,k}_{t_1},\ldots,X^{\bA,k}_{t_d};\xi_t) \ge \\ R_{a_{\|t\|}} (\lim_{k\to \infty} X^{\bA,k}_{t_1},\ldots, \lim_{k\to \infty} X^{\bA,k}_{t_d};\xi_t)  = R_{a_{\|t\|}}( X_{t_1}^{\bA},\dots, X_{t_d}^{\bA}; \xi_t).
\end{multline}
Whence $X_t^{\bA}\ge R( X_{t_1}^{\bA},\dots, X_{t_d}^{\bA}; \xi_t)$. 

Now, recall that $\xi_t$ are in fact random variables. The law of the left hand side of~\eqref{eq:m-lim} is $\mu_{\|t\|}^{\bA}$, the law of the right hand side is by definition $\Phi[\mu_{\|t\|+1}^{\bA}]$, and \eqref{eq:m-lim} gives the stochastic comparison
\begin{equation}\label{eq:mu-stochastic}
\Phi_{a_{\|t\|}}[\mu_{\|t\|+1}^{\bA}] \preceq \mu_{\|t\|}^{\bA}.
\end{equation}

Now, showing that the inequality in~\eqref{eq:m-lim} is in fact \emph{almost surely} an equality is equivalent to prove the equality in~\eqref{eq:mu-stochastic}. Let $n=\|t\|$; by assumption $\mu_n^{\bA}\in\cQ$, and the operator $\Phi_{a_n}$ is continuous at~$\mu_{n+1}^{\bA}$. As $\mu_{n}^{\bA,k}\to \mu_n^{\bA}$ as $k\to\infty$, and we have $\mu_{n}^{\bA,k}=\Phi_{a_n}[\mu_{n+1}^{\bA,k}]$ for any $k> n$, passing to the limit we get the desired equality $\mu_n^{\bA}=\Phi_{a_n}[\mu_{n+1}^{\bA}]$. 

The proof of the second part of the proposition goes in the same way, with the only difference that the sequence $X_n^{\bA^{(\ell)}}$ is now increasing, so that the inequalities are reversed.
\end{proof}

Proposition~\ref{p:removing} shows that even without the additional assumptions, the family of random variables $\{\tX_{t}\}_{t\in\T}$, constructed using the sequence~$\bA$ from Proposition~\ref{p:main}, almost surely satisfies the recurrence relation~\eqref{eq:tilde-R-a}. This, together with the endogeny of the family $\tX_t$, implies Theorem~\ref{t:main}.

\paragraph{Blocks of cut-offs --} Recall from assumption~\ref{a:space} that we have a metric~$d_{\cQ}$ on the space~$\cQ$ of probability measures; all distances are considered with respect to this metric.
The construction of the desired sequence $\bA$ will be done by blocks:
\begin{dfn}
A \emph{block} $\bB$ is a finite sequence $b_1,\ldots,b_n\in (-\infty,+\infty]$. Associated with a block, we define the \emph{block operator} $\Phi_\bB:\cP\to\cP$ which is the composition of the cut-off operators
\[
\Phi_\bB=\Phi_{b_1}\circ\cdots \circ\Phi_{b_n}.
\]
The integer $n$ is called the \emph{size} of the block $\bB$ and is denoted by $|\bB|$.
\end{dfn}

The blocks that we choose should work together, so let us quantify their properties. Namely, 
for each block we will specify how it handles ``as an input'', on the one hand, the  Dirac measure at 
infinity (the result should be close to $\bm$), and on the other hand, a measure close to $\bm$.
To do so, we introduce the following:

\begin{dfn}\label{d:block}
Given $\delta>\delta'>0$, $\eps,\eps'>0$, we say that a block $\bB$ is a \emph{$(\delta,\delta';\eps,\eps')$-block} if
\begin{itemize}
\item the image $\Phi_{\bB}[\ddelta_{+\infty}]$ is $\eps$-close to $\bm$,
\item for any $c'\in(-\delta',0)$ and any $\mu\in\cQ$ which is $\eps'$-close to $\bm_{c'}$, there exists $c\in (-\delta,0)$ such that the image $\Phi_{\bB}[\mu]$ is $\eps$-close to the measure $\bm_c$ (see Fig.~\ref{f:block}).
\end{itemize}
\end{dfn}

\begin{figure}
\[\includegraphics[scale=1]{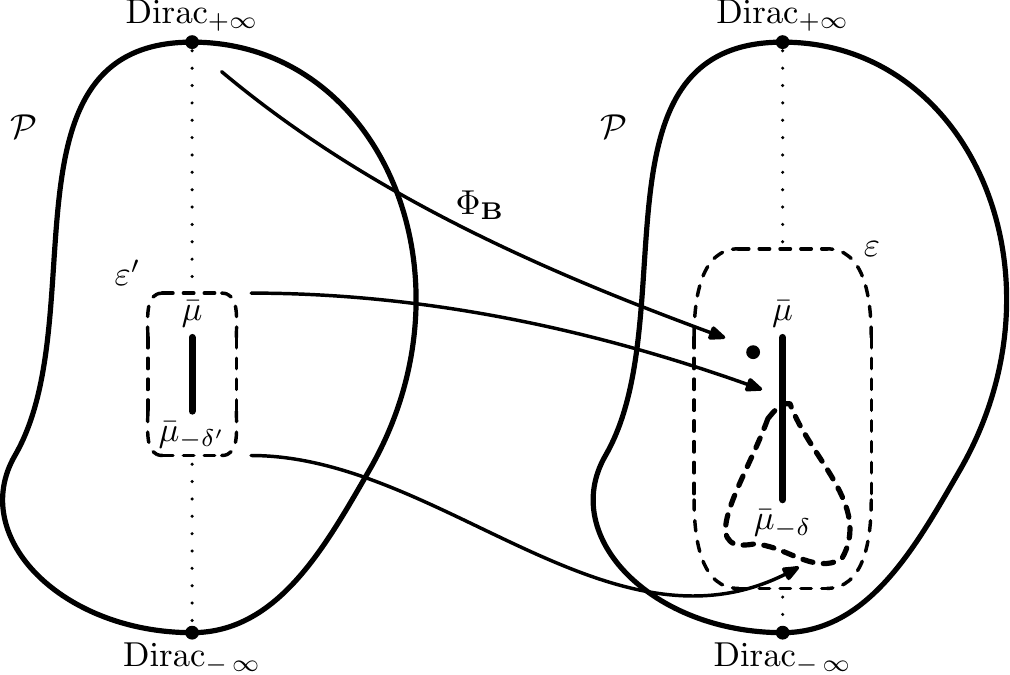}\]
\caption{Properties of a $(\delta,\delta';\eps,\eps')$-block $\bB$.}\label{f:block}
\end{figure}

Observe that for arbitrary choices of $\delta,\delta',\eps,\eps'$, $(\delta,\delta';\eps,\eps')$-blocks may not exist (for instance, one should expect non-existence when~$\eps\ll \eps'$). On the other hand, any $(\delta,\delta';\eps,\eps')$-block is automatically a $(\delta,\delta';\eps,\eps'')$-block for any $\eps''<\eps'$.

As we shall explain in the next paragraph, Proposition \ref{p:main} is a consequence of the following:
\begin{prop}\label{p:blocks}
Under the assumptions~\ref{a:incr}--\ref{a:superexp} the following statement holds: for any $\delta>\delta'>0$, $\eps>0$, there exist $\eps'>0$ and a $(\delta,\delta';\eps,\eps')$-block.
\end{prop}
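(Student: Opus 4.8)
The plan is to build the block $\bB$ by concatenating two pieces: a long ``purely iterated'' tail, and a short ``cut-off head''. Fix $\delta>\delta'>0$ and $\eps>0$. First I would choose the tail length $N$. By assumption~\ref{a:convergence}, the iterates $\Phi^n$ converge to $\bm_{\tc(\mu)}$ uniformly on the compact set $\cQ_0$, and by Remark~\ref{r:Dirac} this uniform convergence extends to compact subsets of $\cQ$; since $\ddelta_{+\infty}$ can be brought into such a compact set by a single cut-off $\Phi_a$ for any finite $a$ (the image $\Phi_a[\ddelta_{+\infty}]$ is supported in $[-\infty,a]$ and lies in $\cQ$ by assumption~\ref{a:space}), and iterating further keeps us in a compact attracting neighbourhood of the translates of $\bm$, I can pick $N$ and a large value $a_0$ so that $\Phi^{N}\!\big[\Phi_{a_0}[\ddelta_{+\infty}]\big]$ is $\tfrac{\eps}{2}$-close to $\bm_{c_\infty}$ for some $c_\infty\in\R$, and simultaneously so that $\Phi^{N}$ maps the whole $\eps'_0$-neighbourhood of any $\bm_{c'}$, $c'\in(-\delta',0)$, to within $\tfrac{\eps}{2}$ of $\bm_{\tc}$ for the corresponding center $\tc$, with good control on $\tc$ (here $\eps'_0$ is some provisional radius, shrunk later). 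The point of the long iteration is that it contracts everything toward the orbit of $\bm$ while only moving the center by a controlled amount; translation-equivariance (assumption~\ref{a:translation}) makes the center bookkeeping additive.

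Next I would prepend the cut-off head to correct the center. The residual center $c_\infty$ produced by the tail from $\ddelta_{+\infty}$ is some fixed real number; I need to choose a finite head, i.e.\ finitely many cut-off values $b_1,\dots,b_r$ placed \emph{before} the $N$ pure iterations, so that the composite image of $\ddelta_{+\infty}$ lands within $\eps$ of $\bm$ itself (center $0$) rather than $\bm_{c_\infty}$. Here assumption~\ref{a:superexp} is the crucial input: $\Delta(a)=-\tc(\Phi_a[\bm])$ is positive (cutting off pushes the center down) and tends to $0$ superexponentially, so by taking one cut-off at a suitably large value $a$ applied to (a measure close to) $\bm$ we shift the center by an amount that we can tune to be as small as we like, and by stacking a few such cut-offs at decreasing-but-still-large values we can hit a prescribed small negative target shift; combined with the freedom to also nudge $c_\infty$ by varying $a_0$ and $N$, this lets us zero out the center for the $\ddelta_{+\infty}$ input. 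One must check the head does not destroy the near-$\bm$ behaviour of the second bullet: since the head consists of cut-offs at large values and, by assumption~\ref{a:space}, the $\Phi_{b_i}$ are continuous on $\cQ$ and close to $\Phi$ there for $b_i$ large (their difference from $\Phi$ is governed by $\Delta$, again superexponentially small), applying the head to a measure $\eps'$-close to some $\bm_{c'}$ with $c'\in(-\delta',0)$ yields a measure that is still close to a translate $\bm_{c''}$ with $c''$ only slightly smaller than $c'$, hence still in $(-\delta,0)$ once $\delta>\delta'$ gives us room; this is where the strict inequality $\delta>\delta'$ is used.

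Finally I would fix $\eps'$. Having chosen $N$, $a_0$, and the finite head $b_1,\dots,b_r$, the block operator $\Phi_\bB$ is a fixed finite composition of continuous maps on $(\cQ,d_\cQ)$ (assumption~\ref{a:space}); the first bullet of Definition~\ref{d:block} holds by construction with the chosen $\eps$. For the second bullet: the map $(c',\text{input})\mapsto \Phi_\bB[\text{input}]$ is continuous, it sends $(c',\bm_{c'})$ to exactly $\bm_{c''(c')}$ with $c''(c')\in(-\delta,0)$ for all $c'\in[-\delta',0]$ (a compact set), and $c''$ depends continuously on $c'$; hence there is a uniform gap to the boundary of $(-\delta,0)$, and by uniform continuity of $\Phi_\bB$ on a compact neighbourhood of $\{\bm_{c'}:c'\in[-\delta',0]\}$ in $\cQ$, there is $\eps'>0$ such that every $\mu$ that is $\eps'$-close to some $\bm_{c'}$ has $\Phi_\bB[\mu]$ within $\eps$ of $\bm_{c''(c')}$, with $c''(c')\in(-\delta,0)$. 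That $\eps'$ completes the $(\delta,\delta';\eps,\eps')$-block. The main obstacle, I expect, is the center bookkeeping in the head: showing that finitely many large cut-offs suffice to realize an arbitrarily small prescribed negative center shift \emph{exactly} (not just approximately), which is where the superexponential decay in assumption~\ref{a:superexp} — ensuring the cut-offs' perturbative effects are summable and finely tunable — does the real work, and where one must be careful that applying a cut-off to a measure merely \emph{close} to $\bm$ (rather than to $\bm$ itself) changes the center by an amount still controlled by $\Delta$ plus a continuity error that the already-fixed closeness budget absorbs.
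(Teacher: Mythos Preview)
Your plan has the right architecture (one cut-off to tame $\ddelta_{+\infty}$, long iterations to approach the line $\{\bm_c\}$, then further cut-offs to adjust the center), but the center bookkeeping breaks down, and not in the way you anticipate. The tension is this: for the second bullet you need $\Phi_{a_0}$ to barely move the centers of the $\bm_{c'}$, $c'\in(-\delta',0)$, which forces $a_0$ to be \emph{large} (the shift is $\Delta(a_0-c')$ and $\Delta\to 0$ only at $+\infty$). But then $c_\infty=\tc(\ddelta_{a_0})=a_0+\tc(\ddelta_0)$ is \emph{large and positive}, so the head must produce a shift of order $-c_\infty$, not a ``small negative target shift''. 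A ``few cut-offs at large values'' each contribute a superexponentially small displacement and cannot sum to $c_\infty$; varying $N$ does nothing since pure $\Phi$-iterations preserve $\tc$; and choosing $a_0$ to make $c_\infty\approx 0$ destroys the control on the second bullet.

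The paper's resolution is quantitative rather than fine-tuning. One uses \emph{many} (typically $N\gg 1$) identical subblocks $\bb_1=\bb(b_1,\ell_1)$, each a single cut-off followed by enough $\Phi$-iterations to return near the line $\{\bm_c\}$, so that the one-step center map $S_{b_1}(c)=c-\Delta(b_1-c)$ governs each step. Bringing $c_0>0$ down to $0$ takes $N\lesssim c_0/\Delta(b_1)$ steps, while the same $N$ steps displace $-\delta''$ by at most $N\,\Delta(b_1+\delta'')\lesssim c_0\cdot \Delta(b_1+\delta'')/\Delta(b_1)$. The genuine role of assumption~\ref{a:superexp} is that this \emph{ratio} $\Delta(b_1+\delta'')/\Delta(b_1)$ tends to $0$ as $b_1\to\infty$; that is what lets you move the positive-side center a long way while barely touching the negative-side ones. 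Your head also lacks the interspersed long iterations, so the intermediate measures are not close to any $\bm_c$ and the function $\Delta$ does not describe their behaviour; this is why the paper works with subblocks $\bb(b,\ell)$ rather than bare $\Phi_b$'s.
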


\paragraph{Proposition \ref{p:blocks} implies Proposition \ref{p:main} --} 
Take $\eps_1$ sufficiently small so that the $2\eps_1$-neighbourhood of $\bm$ belongs to~$\cQ_0$ and
 choose $\delta_0 > 0$ so that for any $c\in(-\delta_0,0)$ we have $d_\cQ(\bm_c,\bm)<\eps_1$. 
Consider any sequence $\delta_n\in(0,\delta_0)$ that decreases to~$0$ (for instance $\delta_n=2^{-n}\delta_0$) 
and define recursively a sequence $\eps_n$ in such a way that $\eps_n\le \tfrac{\eps_1}{n}$ and such that there exists a $(\delta_{n-1},\delta_n;\eps_{n-1},\eps_n)$-block $\bB_n$. Let us check that the sequence~$\bA$, obtained by juxtaposing the blocks $\bB_n$, satisfies the conditions of Proposition \ref{p:main}. To fix notations, we write $\bA=\bB_1\bB_2\cdots$ and $s_n=|\bB_1|+\ldots+|\bB_n|$.

As both limits \eqref{e:limit_decreas} and \eqref{eq:limit_removed} exist (possibly infinite), we can safely define the $X^{\bA}_t$'s and the $\tX^{\bA}_t$'s by considering only the limits along subsequences. We use this remark to be able to apply blocks of cut-offs entirely.

\begin{lem}\label{l:blocks_implies}
For any $n<k$ there exists $c_{n,k}\in (-\delta_n,0)$ such that $\mu_{s_n}^{\bA,s_k}$ is $\eps_n$-close to $\bm_{c_{n,k}}$.
\end{lem}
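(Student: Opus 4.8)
\subsection*{Proof plan}

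The plan is to unroll the recursion \eqref{measures_cutoff} that defines $\mu_{s_n}^{\bA,s_k}$, to recognise the outcome as a composition of block operators applied to the Dirac mass at $+\infty$, and then to run a downward induction on $n$ that invokes the two defining properties of a $(\delta,\delta';\eps,\eps')$-block in turn.

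First I would record the identity
\[
\mu_{s_n}^{\bA,s_k}=\Phi_{\bB_{n+1}}\circ\Phi_{\bB_{n+2}}\circ\cdots\circ\Phi_{\bB_k}[\ddelta_{+\infty}]\qquad(n<k).
\]
Indeed, by \eqref{measures_cutoff} the law at depth $s_k$ is $\ddelta_{+\infty}$, and one climbs from depth $s_k$ up to depth $s_n$ by composing, in order, the cut-off operators attached to the intermediate depths; since $\bA=\bB_1\bB_2\cdots$ is the concatenation of the blocks, these operators are exactly the entries of $\bB_{n+1},\dots,\bB_k$, and regrouping them into block operators gives the displayed formula (one keeps track of the exact depth-to-cut-off correspondence in \eqref{measures_cutoff}, which introduces only a harmless shift by one).

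Then I would argue by downward induction on $n$, from $n=k-1$ to $n=0$, carrying along the auxiliary information that $\mu_{s_n}^{\bA,s_k}\in\cQ$ (this is precisely the ``technical conclusion'' of Proposition~\ref{p:main}, and it follows from assumption~\ref{a:space}, once one notes that $\bB_k$ must contain a finite cut-off value --- otherwise $\Phi_{\bB_k}[\ddelta_{+\infty}]=\ddelta_{+\infty}$, which is not $\eps_{k-1}$-close to $\bm$ --- after which one stays inside $\cQ$). \emph{Base case $n=k-1$:} here $\mu_{s_{k-1}}^{\bA,s_k}=\Phi_{\bB_k}[\ddelta_{+\infty}]$, which by the first defining property of the $(\delta_{k-1},\delta_k;\eps_{k-1},\eps_k)$-block $\bB_k$ is $\eps_{k-1}$-close to $\bm$. \emph{Inductive step:} assuming $\mu_{s_{n+1}}^{\bA,s_k}\in\cQ$ is $\eps_{n+1}$-close to $\bm_{c_{n+1,k}}$ for some $c_{n+1,k}\in(-\delta_{n+1},0)$, I apply the second defining property of the $(\delta_n,\delta_{n+1};\eps_n,\eps_{n+1})$-block $\bB_{n+1}$ (with $c'=c_{n+1,k}$ and $\mu=\mu_{s_{n+1}}^{\bA,s_k}$); it produces $c_{n,k}\in(-\delta_n,0)$ such that $\mu_{s_n}^{\bA,s_k}=\Phi_{\bB_{n+1}}[\mu_{s_{n+1}}^{\bA,s_k}]$ is $\eps_n$-close to $\bm_{c_{n,k}}$, which closes the induction. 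The parameters match exactly because, by the construction preceding the lemma, $\bB_m$ is a $(\delta_{m-1},\delta_m;\eps_{m-1},\eps_m)$-block with $\delta_{m-1}>\delta_m$ and $\eps_{m-1}\ge\eps_m$.

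The one genuinely delicate point --- and the step I expect to require the most care --- is the transition out of the base case: the first block property only yields closeness to $\bm=\bm_0$, whereas the hypothesis ``$c'\in(-\delta_{k-1},0)$'' of the second property of $\bB_{k-1}$ requires the center to be \emph{strictly} negative. This should be repaired by observing that the finite cut-offs present inside $\bB_k$ shift the center of the stationary measure strictly leftward --- a consequence of the stochastic comparison $\Phi_a[\bm]\preceq\bm$ together with assumption~\ref{a:superexp} --- so that in fact $\Phi_{\bB_k}[\ddelta_{+\infty}]$ is $\eps_{k-1}$-close to $\bm_{c_{k-1,k}}$ for some $c_{k-1,k}\in(-\delta_{k-1},0)$; alternatively one simply allows $c\in(-\delta,0]$ in the output clause of Definition~\ref{d:block}, which leaves everything downstream untouched. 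Beyond this, the argument is a bookkeeping exercise aligning the indices of the blocks $\bB_n$ supplied by Proposition~\ref{p:blocks} and the recursive choice of the $\delta_n,\eps_n$.
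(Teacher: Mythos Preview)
Your proof is correct and follows exactly the paper's approach: express $\mu_{s_n}^{\bA,s_k}$ as the composition $\Phi_{\bB_{n+1}}\circ\cdots\circ\Phi_{\bB_k}[\ddelta_{+\infty}]$, use the first block property for the base $n=k-1$, and use the second block property for the downward inductive step. Your flagging of the boundary issue $c'=0$ versus $c'\in(-\delta',0)$ is well spotted---the paper simply glosses over it (and later silently uses closed intervals, e.g.\ $\mK_0=\{\bm_c\mid c\in[-\delta',0]\}$ in \S\ref{p:length}); your second suggested fix, relaxing Definition~\ref{d:block} to allow $c'\in(-\delta',0]$, is the clean resolution.
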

\begin{proof}
By \eqref{measures_cutoff}, we have
\[
\mu_{s_{n-1}}^{\bA,s_k}=\Phi_{\bB_n}\cdots\Phi_{\bB_k}[\ddelta_{+\infty}].
\]
The measure $\mu_{s_k}^{\bA,s_{k-1}}=\Phi_{\bB_k}[\ddelta_{+\infty}]$ is $\eps_{k-1}$-close to $\bm$, since $\bB_k$ is a $(\delta_{k-1},\delta_k;\eps_{k-1},\eps_k)$-block.

Therefore we can prove the statement by (backwards) induction on $n$, taking $n=k-1$ as the base of the induction, and with the induction step provided by the second part of Definition~\ref{d:block}.
\end{proof}

Passing to the limit as $k\to\infty$ in the conclusion of Lemma~\ref{l:blocks_implies}, we see that $\mu_{s_n}^{\bA}$ is $\eps_n$-close to~$\bm_{c_{n}}$, for some $c_n\in [-\delta_n,0)$. Now, considering $\mu_{s_n}^{\bA^{(s_\ell)}}$ and recalling the RDEs \eqref{eq:RDE-bA-l}, we get the same conclusion for $n\ge \ell$, while for $n<\ell$ we get (see~\eqref{eq:RDE-bA-l2})
\[
\mu_{s_n}^{\bA^{(s_\ell)}}=\Phi^{s_\ell-s_n}[\mu_{s_\ell}^{\bA^{(s_\ell)}}].
\]
The measure $\mu_{s_\ell}^{\bA^{(s_\ell)}}$ is $\eps_\ell$-close to $\bm_{c_\ell}$, with $c_\ell\in(-\delta_0,0)$ and $\eps_\ell\le \eps_1$; after the choices for $\eps_1$ and $\delta_0$, the triangular inequality guarantees that the measure $\mu_{s_\ell}^{\bA^{(s_\ell)}}$ belongs to $\cQ_0$:
\[
d_{\cQ}(\mu_{s_\ell}^{\bA^{(s_\ell)}},\bm)\le d_{\cQ}(\mu_{s_\ell}^{\bA^{(s_\ell)}},\bm_{c_\ell})+d_{\cQ}(\bm_{c_\ell},\bm)< 2\eps_1.
\]
Now, recall that due to the assumption~\ref{a:convergence} the powers $\Phi^k$ converge uniformly on $\cQ_0$; also, the measures~$\mu_{s_{\ell}}^{\bA^{(s_{\ell})}}$ converge to $\bm$ as $\ell\to\infty$. For a uniformly convergent family of continuous mappings, the limit of the images of a convergent family of points can be calculated by substituting the limit point to the limit (continuous) map.
Therefore for any $n$ one has
\[
\lim_{\ell\to\infty} \mu_{s_n}^{\bA^{(s_\ell)}}= \lim_{\ell\to\infty} \Phi^{s_\ell-s_n}[\mu_{s_\ell}^{\bA^{(s_\ell)}}]=\bm.
\]
Hence, recalling~\eqref{eq:limit_removed_meas}, we have the equality $\tmu_{s_n}^{\bA}=\bm$ for any $n$, and thus $\tmu_{n}^{\bA}=\bm$ for any $n$. 

\smallskip 

Finally, let us verify the technical assumption in Proposition~\ref{p:main}. Namely, we get $\mu_{s_n}^{\bA^{(s_\ell)},s_k}\in\cQ_0$ for any $k>n>\ell$; due 
to the compactness of $\cQ_0$ we have $\mu_{s_n}^{\bA^{(s_\ell)}}=\lim_{k\to\infty} \mu_{s_n}^{\bA^{(s_\ell)},s_k}\in \cQ_0$. Finally, the invariance of $\cQ$ under all the $\Phi_a$'s implies that $\mu_n^{\bA^{(\ell)}}\in \cQ$ for every $n$ and $\ell$.

This ends the proof of Proposition~\ref{p:main}, assuming Proposition~\ref{p:blocks}.

\section{Proposition~\ref{p:blocks}: constructing the blocks}\label{s:construction}
\paragraph{Subblocks --} The most delicate part of this work is to construct the desired blocks.
The blocks we are looking for will be composed of elementary subblocks:
\begin{dfn}
A \emph{subblock} $\bb$ is a block such that only the last value $b_{|\bb|}$ is finite: for every $1\le k\le |\bb|-1$, $b_k=\infty$.
We denote by $\bb(a,\ell)$ the subblock defined by $b_{|\bb|}=a$, $|\bb|=\ell+1$. So we have that the subblock operator
\[
\Phi_{\bb(a,\ell)}=\Phi^\ell\circ \Phi_a
\]
is the composition of one cut-off operator and $\ell$ iterations of the operator $\Phi$.
\end{dfn}

\paragraph{Long subblocks and centers --}

The convergence assumption \ref{a:everywhere} ensures that for a given initial measure $\mu\in\cQ$,
 if we take a very long subblock $\bb(a,\ell)$ (that is, with $\ell$ sufficiently large), then the image $\Phi_{\bb(a,\ell)}[\mu]$ will be close to $\bm_{\tc(\Phi_a[\mu])}$, and actually the former converges to the latter as~$\ell$ goes to~$\infty$.

When sufficiently long subblocks are applied consecutively, we would like to describe the joint result of their application. The application of each of them leaves us with a measure close to some~$\bm_c$. This leads to the study the dynamics of the centers $c$:
by the continuity of the function~$\tc$, applying a sufficiently long subblock $\bb(a,\ell)$ to a measure $\mu$ close to a stationary measure~$\bm_c$, we obtain a measure 
that is close to the stationary measure $\bm_{\tc(\Phi_a[\bm_c])}$. This motivates the following:

\begin{dfn}[Center function]
For any $a\in\R$, we define the function $S_a:\R\to\R$ by
\[
S_a(c)=\tc(\Phi_a[\bm_c]),
\]
\end{dfn}
In other words, the function $S_a$ locates the center of the measure that we obtain by applying long subblock operators $\Phi_{\bb(a,\ell)}$ to $\bm_c$.

This definition naturally extends by continuity to $c=+\infty$, by putting $S_{a}(+\infty)=\tc(\ddelta_a)$.
For a given $a\in\R$, considering $S_a$ as a self-map of $\R$, we write $S_a^n$ to denote the $n$-fold composition $S_a\circ \cdots \circ S_a$.

\begin{rem}\label{r:S-increases}
As a consequence of the assumption \ref{a:incr} that the operator $\Phi$ preserves the stochastic order, $S_a(c)$ is an increasing function of the cut-off value $a$. Also by the continuity of the center function (assumption~\ref{a:convergence}) the center $S_a(c)$ tends to $c$ as $a$ goes to $+\infty$.
\end{rem}
Sometimes it will be easier to work with the \emph{displacement of centers}, rather than with their new location:
\begin{dfn}[Displacement function]
For any $a\in\R$, we define \emph{the displacement function} $\Delta_a:\R\to(0,+\infty)$ by
\begin{equation}\label{e:Delta}
\Delta_a(c)=c-S_a(c).
\end{equation}
\end{dfn}

Notice that the displacement function takes positive values only because of Remark~\ref{r:S-increases}. Moreover, the same remark implies that $\Delta_a(c)$ is a decreasing function of $a$. We can be more precise:
\begin{lem}\label{l:delta}
For any $a$, $c\in\R$, we have $\Delta_a(c)=\Delta_{a-c}(0)$. In particular $\Delta_a(c)$ is increasing with respect to $c$ and decreasing with respect to $a$.
\end{lem}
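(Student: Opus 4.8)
The plan is to derive the identity $\Delta_a(c)=\Delta_{a-c}(0)$ from the translation-equivariance of the whole construction, used at two levels: for the cut-off operators $\Phi_a$ and for the center function $\tc$. Once this identity is available, the two monotonicity statements become a formal consequence of it together with the fact, already recorded just before the lemma, that $a\mapsto\Delta_a(\cdot)$ is non-increasing.

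First I would record the elementary numerical identity $\min(u+c,a)=\min(u,a-c)+c$, valid for all reals $u,a,c$, and combine it with assumption~\ref{a:translation} to get, for all $y_1,\dots,y_d,\xi\in\R$,
\[
R_a(y_1+c,\dots,y_d+c;\xi)=\min\bigl(R(y_1,\dots,y_d;\xi)+c,\,a\bigr)=R_{a-c}(y_1,\dots,y_d;\xi)+c .
\]
Feeding in independent $y_i$ of law $\bm$ (so that the $y_i+c$ are independent of law $\bm_c$) and an independent $\xi$ of law $m$, the left-hand side has law $\Phi_a[\bm_c]$ and the right-hand side has law $(\Phi_{a-c}[\bm])_c$, the translate by $c$ of $\Phi_{a-c}[\bm]$. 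Hence $\Phi_a[\bm_c]=(\Phi_{a-c}[\bm])_c$ (all these measures lie in $\cQ$, since $\bm\in\cQ$ and $\cQ$ is preserved by translations and by the $\Phi_a$'s).

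Next I would prove that $\tc$ is translation-equivariant, i.e.\ $\tc(\mu_c)=\tc(\mu)+c$ for every $\mu\in\cQ$. Taking $a=+\infty$ above (equivalently, applying assumption~\ref{a:translation} directly) gives $\Phi[\mu_c]=(\Phi[\mu])_c$, hence $\Phi^n[\mu_c]=(\Phi^n[\mu])_c$ for all $n$. Since the translation action is continuous on $(\cQ,d_\cQ)$ by assumption~\ref{a:space}, passing to the limit in~\eqref{eq:center} yields $\Phi^n[\mu_c]\to(\bm_{\tc(\mu)})_c=\bm_{\tc(\mu)+c}$; as $\bm$ is a probability measure on $\R$ it cannot be invariant under a nonzero translation, so distinct translates of $\bm$ are distinct measures and the center of the limit is uniquely determined, giving $\tc(\mu_c)=\tc(\mu)+c$.

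Combining the two steps, $S_a(c)=\tc(\Phi_a[\bm_c])=\tc\bigl((\Phi_{a-c}[\bm])_c\bigr)=\tc(\Phi_{a-c}[\bm])+c=S_{a-c}(0)+c$, whence $\Delta_a(c)=c-S_a(c)=-S_{a-c}(0)=\Delta_{a-c}(0)$. The monotonicity statements follow at once: since $a\mapsto\Delta_a(0)$ is non-increasing, increasing $c$ decreases $a-c$ and hence increases $\Delta_a(c)=\Delta_{a-c}(0)$, while increasing $a$ increases $a-c$ and hence decreases it. I do not expect a serious obstacle; the only point requiring care is the well-definedness and translation-equivariance of $\tc$ in the second step, which rests on the non-degeneracy of $\bm$ and on the continuity of the translation action supplied by assumption~\ref{a:space}.
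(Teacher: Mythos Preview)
Your proof is correct and follows exactly the approach the paper takes: the identity $\Delta_a(c)=\Delta_{a-c}(0)$ is derived from the translation-equivariance assumption~\ref{a:translation}, and the monotonicity then follows from this identity together with Remark~\ref{r:S-increases}. The paper's own proof is two sentences; you have simply unpacked the details (the intertwining $\Phi_a[\bm_c]=(\Phi_{a-c}[\bm])_c$ and the equivariance $\tc(\mu_c)=\tc(\mu)+c$) that the paper leaves implicit.
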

\begin{proof}
The first statement follows from the translation-equivariance~\ref{a:translation}. The second one is a corollary of the first one and of Remark~\ref{r:S-increases}.
\end{proof} 
Observe that
\[
\Delta_a(0)=-S_a(0)=-\tc(\Phi_a[\bm]),
\]
therefore, as function of $a$,  $\Delta_a(0)$ is exactly the function $\Delta$ defined in assumption \ref{a:superexp}. From the previous lemma, we have $\Delta_a(c)=\Delta(a-c)$.

\paragraph{Sketch of construction --}\label{par:sketch} The desired $(\delta,\delta';\eps,\eps')$-block $\bB$ will be defined by a juxtaposition of subblocks constructed in the following way. 
First, we apply a subblock $\bb_0=\bb(b_0,\ell_0)$ with the following properties:
\begin{enumerate}
\item the cut-off value $b_0$ is chosen sufficiently large so that it almost does not affect the measures $\bm_c$'s, with $c\in [-\delta',0]$;
\item the number of iterations $\ell_0$ is chosen sufficiently large, so that, after cutting off the Dirac measure $\ddelta_{+\infty}$ at $b_0$, $\Phi_{\bb_0}$ almost sends it to $\bm_{c_0}$, with $c_0=\tc(\ddelta_{b_0})=S_{b_0}(+\infty)$. 
\end{enumerate}

After this, we choose a second subblock $\bb_1=\bb(b_1,\ell_1)$ that we repeat (identically) sufficiently many times $N$, so that the corresponding transformation of $\cP$ will bring $\bm_{c_0}$ sufficiently close to $\bm$, and its ``side effect'', when applying it to $\bm_c$, with $c\in (-\delta',0)$, stays under control (these measures will not be pushed to the left farther than~$\bm_{-\delta}$). Making all of this work will provide us with the desired block
\[
\bB=\underbrace{\bb_1\cdots \bb_1}_{N\text{ times}}\bb_0;
\]
see Fig.~\ref{f:S} for how its application should treat the centers of measures.

\paragraph{Displacement of the centers --}

We shall start by choosing the cut-off values $b_0$ and $b_1$ for the subblocks and the number $N$ of repetitions of $\bb_1$, using the approximation by the dynamics of the centers. Then, in the next paragraph, we will choose the appropriate sizes~$\ell_0$, $\ell_1$, concluding the proof of Proposition~\ref{p:blocks}.

First, choose and fix $\delta''$ such that $\delta>\delta''>\delta'$. Note that as $b\to+\infty$, we have 
\[
S_b(-\delta') \nearrow -\delta'\]
by Remark~\ref{r:S-increases}, and
\[
 \tc(\ddelta_b)\to +\infty,
\]
for $\tc(\ddelta_b)=b+\tc(\ddelta_0)$.

Thus, there exists a sufficiently large $b_0$ such that $S_{b_0}(-\delta')\in(-\delta'',-\delta')$ and $\tc(\ddelta_{b_0})>0$. We choose and fix such $b_0$, and, as above, we write 
\[c_0:=\tc(\ddelta_{b_0}).\]
As a result of these choices, once we also choose (at the very end, \S~\ref{p:length}) $\ell_0$ sufficiently large, we shall get that:
\begin{enumerate}
\item \label{cond:bb0-1} the measure $\Phi_{\bb(b_0,\ell_0)}[\ddelta_{+\infty}]=\Phi^{\ell_0}[\ddelta_{b_0}]$ is \emph{sufficiently close} to $\bm_{c_0}$;
\item \label{cond:bb0-2} for every $c\in(-\delta',0)$, the measure $\Phi_{\bb(b_0,\ell_0)}[\bm_c]$  is \emph{sufficiently close} to one of the measures $\bm_{c'}$'s, with $c'\in (-\delta'',0)$.
\end{enumerate}

Next, let us choose the second cut-off value $b_1$ and the number $N$ of repetitions for $\bb_1$. To do so, we choose and fix $\eps_0$ such that the measures $\bm_c$'s, with $|c|<\eps_0$ are $\eps/2$-close to $\bm$, with respect to our metric $d_\cQ$ on the space $\cQ$. We shall prove the following:
\begin{lem}\label{l:7}
There exist $b_1$ and $N$ such that 
\begin{enumerate}[1')]
\item \label{cond:S^N-1} $S^N_{b_1}(c_0)$ is $\eps_0$-close to zero, and
\item \label{cond:S^N-2} $S^{N}_{b_1}(-\delta'')\in (-\delta,-\delta'')$.
\end{enumerate}
\end{lem}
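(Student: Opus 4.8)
The plan is to analyze the scalar dynamical system $c \mapsto S_{b_1}(c)$ on $\R$, exploiting the structure provided by Lemma~\ref{l:delta}, namely $\Delta_{b_1}(c)=\Delta(b_1-c)$, together with the superexponential decay of $\Delta$ from assumption~\ref{a:superexp}. The key qualitative picture is: starting from $c_0>0$, the orbit $S_{b_1}^n(c_0)$ decreases (since $\Delta_{b_1}(c)>0$ always), and we must show that for a suitable choice of $b_1$ large and $N$ large it both reaches the $\eps_0$-window around $0$ (condition~\ref{cond:S^N-1}) and, simultaneously, the orbit issued from $-\delta''$ has not yet escaped below $-\delta$ (condition~\ref{cond:S^N-2}). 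The two conditions pull in opposite directions — we want many steps to bring $c_0$ down to $0$, but few enough (or small enough) steps that $-\delta''$ stays above $-\delta$ — so the whole point is that taking $b_1$ large makes each individual displacement $\Delta(b_1-c)$ tiny on the relevant range of $c$, hence the descent from $c_0$ to near $0$ happens in small increments that never overshoot $\eps_0$, while the total additional descent accumulated past $-\delta''$ can be kept below $\delta-\delta''$.

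**Key steps.** First, for fixed $b_1$, observe that on the compact interval $[-\delta'', c_0]$ the displacement is bounded: $\Delta_{b_1}(c)=\Delta(b_1-c)\le \Delta(b_1-c_0)=:\eta(b_1)$, and by assumption~\ref{a:superexp} we have $\eta(b_1)\to 0$ as $b_1\to+\infty$; in fact for the second condition we will need the stronger consequence that $\Delta(b_1-c)$ summed geometrically is small. Second, choose $b_1$ large enough that $\eta(b_1)<\eps_0$; then the orbit $S_{b_1}^n(c_0)$ decreases by steps of size at most $\eps_0$, so there is a first index $N=N(b_1)$ with $S_{b_1}^N(c_0)\in(-\eps_0,\eps_0)$ — this is condition~\ref{cond:S^N-1}, and it works because the orbit cannot jump over the window of width $2\eps_0$. (If the orbit were to decrease indefinitely without hitting $[-\eps_0,\eps_0]$ one contradicts $\tc$-continuity forcing $S_{b_1}$ to have $0$-related fixed behaviour near the stationary measure; more simply, since each step is $<\eps_0$ and $c_0>0$, once the orbit drops below $\eps_0$ it lands in the window.) Third, and this is where assumption~\ref{a:superexp} is essential, estimate the orbit from $-\delta''$: as long as the orbit stays in $[-\delta,-\delta'']$ each step is bounded by $\Delta(b_1+\delta'')$, but to control the \emph{total} descent over the $N$ steps we use monotonicity of $\Delta$ together with the superexponential bound to show $\sum_{n\ge 0}\Delta(b_1-c_0+n\eps_0)\to 0$ as $b_1\to\infty$; since the $c_0$-orbit descends by at least... — more carefully, one compares the two orbits step by step using that $S_{b_1}$ is monotone nondecreasing (Remark~\ref{r:S-increases}), so the orbit from $-\delta''$ stays below the orbit from $c_0$, and over the same number $N$ of steps accumulates at most $\sum_{k=0}^{N-1}\Delta(b_1 + \delta'')\le N\,\Delta(b_1+\delta'')$, which we bound by comparing $N$ to $c_0/\eps_0$-type quantities and invoking superexponential smallness to make $N\,\Delta(b_1+\delta'')<\delta-\delta''$. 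Finally, $b_1$ can be enlarged once more if needed so that all these inequalities hold simultaneously, fixing then $N=N(b_1)$; this yields both~\ref{cond:S^N-1} and~\ref{cond:S^N-2}.

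**Main obstacle.** The delicate point is the interplay in the third step: controlling $N$ (the number of steps needed to bring $c_0$ down to the window) against the per-step displacement near $-\delta''$, so that the product $N\cdot(\text{step size near }{-}\delta'')$ stays below the budget $\delta-\delta''$. The naive bound $N\le c_0/(\text{minimal step})$ is useless because the minimal step could be tiny; instead one must use that the steps near $c_0$ (where $b_1-c$ is smallest, $=b_1-c_0$) are the \emph{largest}, so $N$ is not too large — roughly $N=O(c_0/\Delta(b_1-c_0))$ in the worst case — and then $N\,\Delta(b_1+\delta'')=O\!\big(c_0\,\Delta(b_1+\delta'')/\Delta(b_1-c_0)\big)$. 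This ratio of two superexponentially small quantities need not be small in general, so the argument must be run more carefully: one should not bound $N$ crudely but rather compare the two orbits directly and sum $\sum_n \Delta(b_1 - S_{b_1}^n(c_0))$, a telescoping-type sum that is itself bounded by $c_0 - \lim_n S_{b_1}^n(c_0)\le c_0$ uniformly, while each term along the $-\delta''$-orbit is dominated using monotonicity — so the correct statement is that the \emph{total} descent of the $-\delta''$-orbit over exactly $N$ steps is at most the total descent of the $c_0$-orbit, which is at most $c_0+\delta''$, but that is too weak. The real fix, which the superexponential hypothesis is designed for, is that for $b_1$ large the $c_0$-orbit descends so slowly (tiny steps) that $N$ grows, yet on the range $[-\delta'',-\delta]$ the displacement $\Delta(b_1+\delta'')$ is superexponentially smaller still; making this quantitative — choosing $b_1$ so that $N(b_1)\,\Delta(b_1+\delta'')<\delta-\delta''$ — is the crux, and I expect it to require writing $N(b_1)\lesssim (c_0+\eps_0)/\Delta(b_1-c_0)$ and then invoking $\Delta(a)=o(e^{-\beta a})$ for $\beta$ large enough that $\Delta(b_1+\delta'')/\Delta(b_1-c_0)\to 0$, which holds precisely because $\delta''>-c_0$ (equivalently $\delta''+c_0>0$) gives an exponential gain $e^{-\beta(\delta''+c_0)}$ beating any fixed polynomial loss in $1/\Delta$.
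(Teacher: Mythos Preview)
Your overall plan is the right one and coincides with the paper's: bound $N$ from above, bound the total drop of the $(-\delta'')$-orbit by $N\cdot\Delta_{b_1}(-\delta'')=N\,\Delta(b_1+\delta'')$, and show the resulting ratio can be made smaller than $(\delta-\delta'')/(c_0+\eps_0)$. Your handling of condition~\ref{cond:S^N-1}') by a step-size argument is fine (the paper instead uses the monotonicity of $S_b$ in $c$ to get $S_b^{N}(c_0)>S_b(0)$, but both work).

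There is, however, a genuine gap in your bound on $N$. You write ``the steps near $c_0$ \dots\ are the largest, so $N$ is not too large --- roughly $N=O(c_0/\Delta(b_1-c_0))$''. This inequality goes the wrong way. Since $\Delta_{b_1}(c)=\Delta(b_1-c)$ is \emph{increasing} in $c$, along the $c_0$-orbit the step sizes \emph{decrease} as the orbit descends; the quantity $\Delta(b_1-c_0)$ is the \emph{maximal} step, so $c_0/\Delta(b_1-c_0)$ is a \emph{lower} bound for $N$, not an upper one. To bound $N$ from above you must use the \emph{minimal} step on the range $[0,c_0]$, which is attained at $c=0$ and equals $\Delta_{b_1}(0)=\Delta(b_1)$. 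The telescoping identity
\[
c_0 - S_{b_1}^{N}(c_0)=\sum_{j=0}^{N-1}\Delta_{b_1}\!\bigl(S_{b_1}^{j}(c_0)\bigr) > N\,\Delta_{b_1}(0)=N\,\Delta(b_1),
\]
together with $S_{b_1}^{N}(c_0)>-\eps_0$, gives the correct estimate $N<(c_0+\eps_0)/\Delta(b_1)$. The relevant ratio is therefore $\Delta(b_1+\delta'')/\Delta(b_1)$, with gap $\delta''>0$, not your $\Delta(b_1+\delta'')/\Delta(b_1-c_0)$.

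A second, smaller point: superexponential decay $\Delta(a)=o(e^{-\beta a})$ for all $\beta$ does \emph{not} by itself force $\Delta(a+h)/\Delta(a)\to 0$ for fixed $h>0$ (think of a monotone function that is nearly constant on long plateaus). What it does give --- and what suffices --- is that this ratio cannot stay $\ge 1/C$ for all large $a$: otherwise iterating yields $\Delta(a_0+mh)\ge \Delta(a_0)\,C^{-m}$, contradicting the decay with $\beta=(\log C)/h$. So you should conclude only that \emph{some} $b_1>b_1'$ makes the ratio as small as you need, not that the ratio tends to zero. With these two corrections your argument is complete and matches the paper's.
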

This is exactly what we need for constructing the block (Definition~\ref{d:block}): condition~\ref{cond:S^N-1}') ensures that the measure $\Phi_{\bB}[\ddelta_{+\infty}]=\Phi_{\bb_1}^N\Phi_{\bb_0}[\ddelta_{+\infty}]$ is close to~$\bm$, while condition~\ref{cond:S^N-2}') corresponds to the second one in Definition~\ref{d:block}.

\begin{figure}
\[\includegraphics[scale=1]{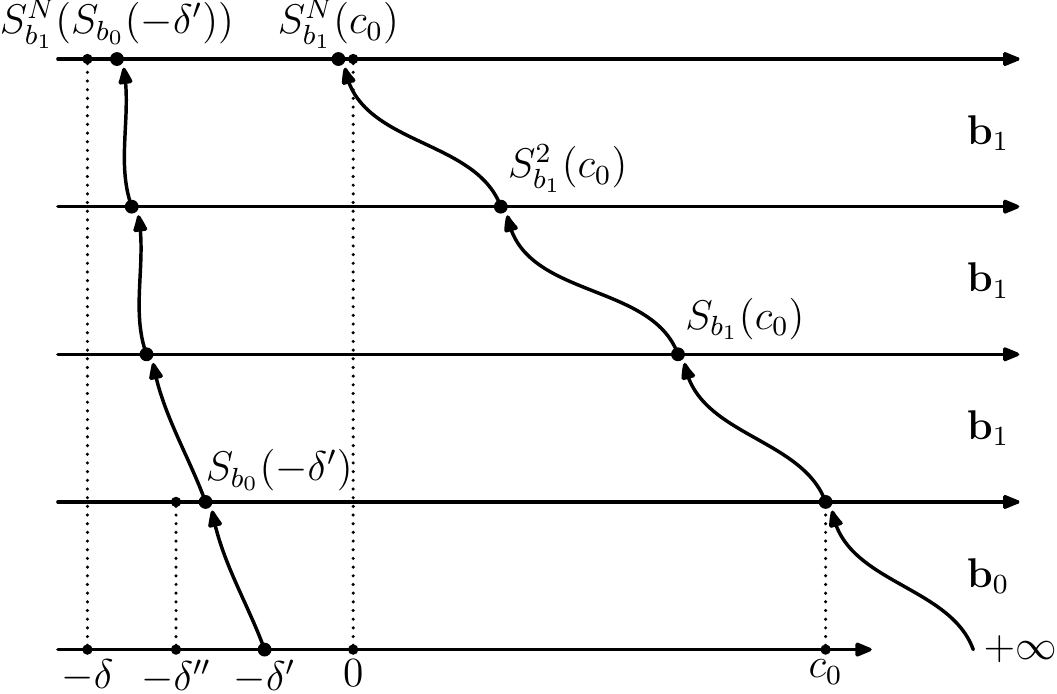}\]
\caption{Construction of the block $\bB$: choices for $b_1$ and $N$.}\label{f:S}
\end{figure}

\begin{proof}
For any $b$, the map $c\mapsto S_b(c)$ has no fixed points on the real line, whence 
\[S_b^n(c_0)\to -\infty\quad\text{as }n\to\infty.\]
In particular, for some number $n$ of iterations, we have $S_b^n(c_0)<0$. Take~$N$  to be the first such iteration (thus $N$ depends on $b$, not chosen yet!); that is, for any given $b\in \R$ define
\begin{equation}\label{e:N(b)}
N(b)=\min\{n\mid S_{b}^{n}(c_0)\le 0\}.
\end{equation}
With this choice, 
\begin{equation}\label{e:N(b)-in}
0\ge S_{b}^{N(b)}(c_0)=S_{b} (S_{b}^{N(b)-1}(c_0))>S_b(0),
\end{equation}
where in the last inequality we have used that the function $S_b$ is increasing. On the other hand, $S_b(0)\nearrow 0$ as $b\to+\infty$, hence there exists $b_1'$ such that $S_{b_1'}(0)\in(-\eps_0,0)$. Again by monotonicity of the center function, for any $b>b_1'$ we have 
\[
0\ge S_{b}^{N(b)}(c_0)> S_{b}(0) > S_{b_1'}(0) >-\eps_0,
\]
thus fulfilling  condition~\ref{cond:S^N-1}').

Now, we ought to choose $b_1>b_1'$ in such a way that  condition~\ref{cond:S^N-2}') is also fulfilled. 
To do so, note that for any $b$, $n$ and any initial center $c\in \R$, we have a telescopic sum:
\[
c-S_b^n(c)= \left(c-S_b^1(c)\right) +\dots + \left(S_b^{n-1}(c)-S_b^{n}(c)\right) = \sum_{j=0}^{n-1} \Delta_b(S_b^j (c)).
\]
After Lemma~\ref{l:delta}, the function $\Delta$ is monotone, thus we control the range of $c-S_b^n(c)$:
\begin{equation}\label{eq:sum-Delta}
n\Delta_b(c) > c-S_b^n(c) > n \Delta_b(S_b^{n-1} (c)). 
\end{equation}

The inequalities \eqref{e:N(b)-in} and \eqref{eq:sum-Delta} determine an upper bound for $N=N(b)$: by definition $S_b^N(c_0)\le 0 < S_b^{N-1}(c_0)$, and hence (provided $b\ge b_1'$)
\[
c_0+\eps_0> c_0-S_b^{N}(c_0) > N \Delta_b(S_b^{N-1} (c_0)) >  N \Delta_b(0).
\]
We then have 
\begin{equation}\label{eq:N-less}
N< \frac{c_0+\eps_0}{\Delta_b(0)}.
\end{equation}
From the first inequality in \eqref{eq:sum-Delta} we also have a first lower bound for $S^N_b(-\delta'')$: for any $b\in\R$,
\begin{equation}\label{eq:SbN}
S_b^N(-\delta'')>-\delta''-N\Delta_b(-\delta'').
\end{equation}
Joining the estimates in~\eqref{eq:N-less} and \eqref{eq:SbN}, for any $b\ge b_1'$ one has
\[
S_b^N(-\delta'') > -\delta'' - (c_0+\eps_0) \frac{\Delta_b(-\delta'')}{\Delta_b(0)}.
\]
To establish conclusion~\ref{cond:S^N-2}'), it suffices thus to find $b_1>b_1'$ such that 
\begin{equation}\label{eq:b1}
-\delta'' - (c_0+\eps_0) \frac{\Delta_{b_1}(-\delta'')}{\Delta_{b_1}(0)}>-\delta,
\end{equation}
or, in other words, such that 
\[
\frac{\Delta(b_1+\delta'')}{\Delta(b_1)}=\frac{\Delta_{b_1}(-\delta'')}{\Delta_{b_1}(0)}< \frac{\delta-\delta''}{c_0+\eps_0}.
\]
Under our assumption \ref{a:superexp}, the existence of such a ${b_1}$ is guaranteed. Indeed, set
\[
\frac{\delta-\delta''}{c_0+\eps_0}=\frac1C.
\]
Assuming the contrary, we would have that for any $a\ge b_1'$,
\begin{equation}\label{last_step}
\frac{\Delta(a+\delta'')}{\Delta(a)}\ge \frac1C;
\end{equation}
fixing any $a_0> b_1'$, we have the inequality \eqref{last_step} for every $a=a_0+k\delta''$, $k=0,1,\ldots$. Thus, for any~$m\in\N$, one has
\[
\frac{\Delta(a_0+m\delta'')}{\Delta(a_0)}=\prod_{k=0}^{m-1}\frac{\Delta(a_0+(k+1)\delta'')}{\Delta(a_0+k\delta'')}\ge \frac{1}{C^{m}}.
\]
So, for any $m\in\N$, one has $\Delta(a_0+m\delta'')\ge \Delta(a_0)\cdot C^{-m}$. However this contradicts the assumption~\ref{a:superexp}  of superexponential decrease of $\Delta$ (for $\beta=\frac{\log C}{\delta''}$).
This completes the proof of the lemma.
\end{proof}

\paragraph{Returning to the space of measures --}\label{p:length} The initial cut-off value $b_0$ has already been chosen. Now we choose and fix $b_1$ and $N$ satisfying the conclusions of Lemma~\ref{l:7}. We are ready to conclude the proof of Proposition~\ref{p:blocks}: we need to determine the lengths of the blocks $\bb_0$ and $\bb_1$ in order to have a good approximation of the dynamics with the center function.

We will need the following easy lemma, whose proof is left to the reader:
\begin{lem}\label{l:conv}
Let $U_{\mF}$, $U_{\mG}\subset \cP$ be two open sets, and let $\mF_n:U_{\mF}\to \cP$, $\mG_n:U_{\mG}\to \cP$ be two sequences of continuous maps, which converge  to functions $\mF$ and $\mG$ (resp.), uniformly on $U_\mF$ and~$U_\mG$ respectively. 
Let $\mK\subset U_\mG$ be a compact set such that $\mG(\mK)\subset U_{\mF}$. Then there exist $n_0$ and an open neighbourhood~$U$ of $\mK$ in $U_{\mG}$ such that, for all $n\ge n_0$, the composition $\mF_n\circ \mG_n$ is defined on $U$ and converges to $\mF\circ \mG$ uniformly on $U$, as $n\to\infty$.
\end{lem}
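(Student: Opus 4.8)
The plan is to exploit the fact that it is standard that the space $\cP$ of Radon probability measures on $[-\infty,+\infty]$ is a compact metric space (with the metric $d_\cP$ fixed at the beginning), so that all the relevant images can be confined to one fixed compact set on which the limit map $\mF$ is uniformly continuous. First I would note that $\mF$ and $\mG$, being uniform limits of continuous maps, are continuous, so that $\mG(\mK)$ is a compact subset of the open set $U_\mF$. Since $\cP$ is compact metric, there is a radius $\rho>0$ (any $\rho>0$ if $U_\mF=\cP$, and otherwise $\rho$ equal to half the positive distance from $\mG(\mK)$ to the closed set $\cP\setminus U_\mF$) such that the closed $\rho$-neighbourhood $V$ of $\mG(\mK)$ is contained in $U_\mF$; being a closed subset of the compact space $\cP$, the set $V$ is compact, hence $\mF$ restricted to $V$ is uniformly continuous.

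Next I would build $U$ and $n_0$. By uniform convergence $\mG_n\to\mG$ on $U_\mG$ there is $n_0$ with $\sup_{x\in U_\mG}d_\cP(\mG_n(x),\mG(x))<\rho/2$ for all $n\ge n_0$. By continuity of $\mG$, the set $U:=\mG^{-1}(W)\cap U_\mG$, where $W$ is the open $\rho/2$-neighbourhood of $\mG(\mK)$, is an open neighbourhood of $\mK$ in $U_\mG$, and $\mG(U)\subset W\subset V$. For $n\ge n_0$ and $x\in U$ the triangle inequality gives $d_\cP(\mG_n(x),\mG(\mK))\le d_\cP(\mG_n(x),\mG(x))+d_\cP(\mG(x),\mG(\mK))<\rho$, so $\mG_n(x)\in V\subset U_\mF$; hence $\mF_n\circ\mG_n$ (for $n\ge n_0$) and $\mF\circ\mG$ are well-defined on $U$.

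For the uniform convergence, for $x\in U$ and $n\ge n_0$ I would write
\[
d_\cP\big(\mF_n(\mG_n(x)),\mF(\mG(x))\big)\le d_\cP\big(\mF_n(\mG_n(x)),\mF(\mG_n(x))\big)+d_\cP\big(\mF(\mG_n(x)),\mF(\mG(x))\big).
\]
The first term is bounded by $\sup_{y\in V}d_\cP(\mF_n(y),\mF(y))$, which tends to $0$ since $\mF_n\to\mF$ uniformly on $U_\mF\supset V$. For the second, given $\eta>0$, the uniform continuity of $\mF|_V$ furnishes $\sigma>0$ with $d_\cP(\mF(y),\mF(y'))<\eta$ whenever $y,y'\in V$ and $d_\cP(y,y')<\sigma$; then enlarging $n_0$ so that $\sup_{x\in U_\mG}d_\cP(\mG_n(x),\mG(x))<\min(\sigma,\rho/2)$ and using $\mG_n(x),\mG(x)\in V$ makes the second term $<\eta$. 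Taking $\sup_{x\in U}$ gives $\mF_n\circ\mG_n\to\mF\circ\mG$ uniformly on $U$, as desired. There is no real obstacle here; the only point requiring care is the bookkeeping that keeps all of $\mG(U)$ and all the $\mG_n(U)$, $n\ge n_0$, inside one fixed compact set $V$ — which is exactly why the buffer radius $\rho$ is split so that one half is absorbed by the uniform convergence $\mG_n\to\mG$ and the other by the continuity of $\mG$.
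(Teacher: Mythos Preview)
Your argument is correct and complete: you correctly use the compactness of $\cP$ to trap $\mG(U)$ and all the $\mG_n(U)$ inside a single compact set $V\subset U_\mF$, after which uniform continuity of $\mF$ on $V$ and a standard triangle-inequality splitting finish the job. The paper itself does not give a proof of this lemma (it is stated as an ``easy lemma, whose proof is left to the reader''), so there is no alternative approach to compare against; your write-up is exactly the kind of verification the authors had in mind.
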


Fix the space $\cQ$ and the compact neighbourhood $\cQ_0$ satisfying the assumptions~\ref{a:everywhere} and~\ref{a:convergence}. Consider the compact segment
\[
\mK_0:=\{\bm_c \mid c\in [-\delta',0] \}.
\]
Up to taking a larger $\cQ_0$, we can suppose that it contains $\mK_0$ and the cut-off measures
$\Phi_{b_0}[\mK_0]$.
Due to the assumptions~\ref{a:everywhere} and~\ref{a:convergence}, given $\mu\in \cQ$, one has the convergence
\[\lim_{\ell\to\infty}\Phi^\ell[\mu]= \bm_{\tilde{c}(\mu)},\]
hence the operators $\{\Phi_{\bb(b_0,\ell)}\}_{\ell\in\N}$ pointwise converge on $\cQ$ to the operator $\mF:\mu\mapsto \bm_{\tilde{c}(\Phi_{b_0}[\mu])}$ as~$\ell\to\infty$, and the convergence is uniform on $\cQ_0$. 

Note that on the line $\{\bm_c\}_{c\in\R}$ the limit map $\mF$ acts by $\bm_c \mapsto \bm_{S_{b_0}(c)}$. Hence, considering our choice for $b_0$, the image $\mF [\mK_0]$ is contained in the segment
\[
\mK_1:=\{\bm_c \mid c\in [-\delta'',0] \}.
\]
Up to extending the neighbourhood $\cQ_0$, we can suppose that it contains $\mK_1$. Using again the assumptions~\ref{a:everywhere} and~\ref{a:convergence}, given $\mu\in \cQ$, one has the convergence
\[\lim_{\ell\to\infty}\Phi_{\bb(b_1,\ell)}[\mu]= \bm_{\tilde{c}(\Phi_{b_1}[\mu])},\]
hence the operators $\Phi_{\bb(b_1,\ell)}$ converge to the operator $\mG:\mu\mapsto \bm_{\tilde{c}(\Phi_{b_1}[\mu])}$ as $\ell\to\infty$, and the convergence is uniform on $\cQ_0$. 

Applying Lemma~\ref{l:conv} to every composition in
\[
\mG_{\ell}:=\underbrace{\Phi_{\bb(b_1,\ell)}\circ \dots \circ \Phi_{\bb(b_1,\ell)}}_{N\text{ times}} \circ \Phi_{\bb(b_0,\ell)},
\]
we see that there is a neighbourhood $U$ of $\mK_0$ on which $\mG_{\ell}$ converges uniformly to $\mG^N \circ \mF$. Finally, by construction 
\[
\mG^N  \mF [\ddelta_{+\infty}] = \bm_{S_{b_1}^N(c_0)},
\]
and this measure is $\eps/2$-close to $\bm$, because of the inequality $|S_{b_1}^N(c_0)|<\eps_0$ and the choice of $\eps_0$. Hence, 
for every  sufficiently large $\ell$, the measure $\mG_{\ell}[\ddelta_{+\infty}]$ is $\eps$-close to $\bm$.

On the other hand, for any $c'\in [-\delta',0]$ we have
\[
\mG^N  \mF [\bm_{c'}] = \bm_{c},
\]
where $c= S_{b_1}^N(S_{b_0}(c')) \in [-\delta,0]$. Due to the uniform convergence (and hence uniform continuity), there exists $\eps'>0$ such that for every sufficiently large $\ell$, for every $c'\in[-\delta, 0]$ and every $\mu'\in\cQ_0$ that is~$\eps'$-close to $\bm_{c'}$, we have that $\mG_\ell[\mu']$ is $\eps$-close to $\bm_c$, for some $c\in [-\delta,0]$. 

Taking $\ell_0=\ell_1=\ell$ sufficiently large so that both conclusions above hold, we see that the block~$\bB$, obtained by juxtaposition of $N$ copies of $\bb(b_1,\ell)$ and one $\bb(b_0,\ell)$, satisfies the conclusions of Proposition~\ref{p:blocks}.

With this, the proof of Theorem~\ref{t:main} is accomplished.

\section{Random metrics on hierarchical graphs}\label{s:metrics}
\paragraph{Background --}\label{s:background}

Let us briefly recall the setting of the problem (see~\cite[\S\,8.1]{KKT}). Assume that we are given a graph~$\Gamma$ with a chosen orientation on the edges and with two marked vertices,~$I$ and~$O$, and a law~$m$ supported on~$\R_+=(0,+\infty)$. One can construct a sequence of marked weighted graphs $(\Gamma_n, I,O)$ in the following way:
\begin{itemize}
\item $\Gamma_0$ is the edge between and $I$ and $O$, whose length is $1$.
\item For each $n\ge 1$, we replace each edge $e$ of $\Gamma_{n-1}$ by a rescaled copy of the basic graph $\Gamma$; here~$I$ is glued to the beginning of the edge, and $O$ to the end. The length of every edge of the new copy is taken to be $\xi_e \cdot |e|$, where $|e|$ is the length of the replaced edge $e$, and all random variables $\xi_e$ are i.i.d.~with the law~$m$. 
\end{itemize}
This process is illustrated on Fig.~\ref{f:diamond} for the diamond graph. Each graph $\Gamma_n$ is equipped with its graph-distance $d_n$, and the vertices of $\Gamma_n$ are naturally included in the set of vertices of~$\Gamma_{n+1}$. The question is whether there exists a normalization constant $\lambda$ such that the sequence of distances $\lambda^n d_n$ converges to a distance (for instance, on the set which is the union of the vertices of all the $\Gamma_n$'s). At a combinatorial level, the limit space is the \emph{hierarchical graph} associated with $(\Gamma,I,O)$.

\begin{figure}
\[\includegraphics[scale=1]{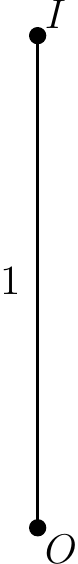} \qquad \includegraphics[scale=1]{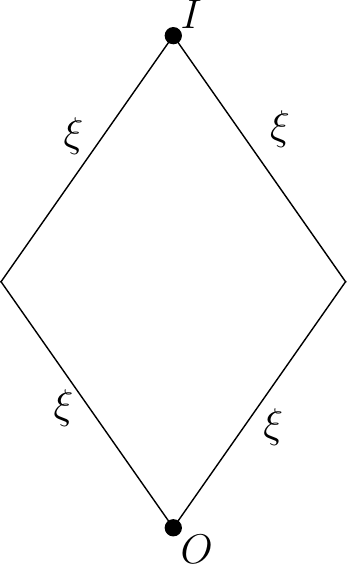}\qquad \includegraphics[scale=1]{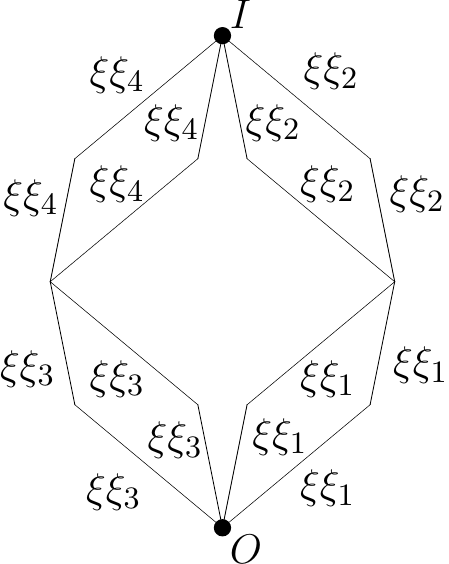}\qquad\includegraphics[scale=1]{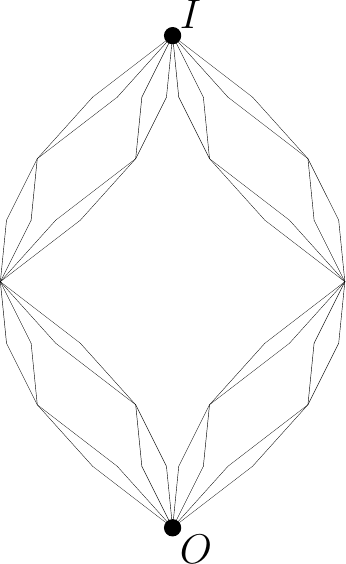}\]
\caption{Multiplicative cascade on a hierarchical diamond graph.}\label{f:diamond}
\end{figure}

\begin{rem}
An important remark is that for a generic law for the random variable~$\xi$, with no normalization by $\lambda$, the distances obtained by 
this process do not converge. Indeed, if we replace the law of $\xi$ by the law of $(1+\eps) \xi$, the distances $d_n$ obtained on the $n$th step 
will be multiplied by~$(1+\eps)^n$. Thus, if we have convergence for the initial law without a normalization by $\lambda$, the metrics 
obtained with any $\eps>0$ will explode, while with any $\eps<0$ will collapse.
\end{rem}

One can also consider the process in the other direction, saying that we are replacing each of the edges of $\Gamma$ by an independent copy of $\Gamma_{n-1}$, and then rescale the result by a random common factor~$\xi$. This process leads us to a RDE, which relates the $IO$-distance $\hat{X}$ (that is, length of the shortest path between these two points) to the analogous distances $\hat{X}_e$'s in the children graphs:
\begin{equation}\label{eq:Gamma}
\hat{X}\stackrel{d}{=}\lambda\hat{\xi} \cdot \min_{\pi} \sum_{e\in\pi} \hat{X}_e,
\end{equation}
where the minimum is taken over all the paths $\pi$ joining $I$ to $O$ in~$\Gamma$.
For instance, for the diamond graph one has
\[
\hat R_{\lambda}(\hat{X}_1,\dots,\hat{X}_4; \hat{\xi})= \lambda \hat{\xi} \cdot \min(\hat{X}_1+\hat{X}_2,\hat{X}_3+\hat{X}_4).
\]
Passing to the logarithmic coordinates $X_i=\log \hat{X}_i$, $\xi=\log \lambda \hat{\xi}$, the RDE~\eqref{eq:Gamma} turns into
\begin{equation}\label{eq:Gamma-log}
R(X_i;\xi) = \xi+ \log \min_{\pi} \sum_{e\in \pi} \exp(X_e).
\end{equation}
These are the RDEs of Theorem~\ref{t:applying}. Note that we have incorporated the (unknown) critical drift $\log \lambda$ in 
the law of $\xi$; its existence for non-pivotal graphs is guaranteed by~\cite[Thm.~1]{KKT}. In particular, in the statement of Theorem~\ref{t:applying} we are taking $\xi=\mN(a,\sigma^2)$, where $a$ is chosen for a given 
$\sigma^2$ in such a way that~\eqref{eq:Gamma-log} admits a solution that is almost surely finite.

However, to use~\cite[Thm.~1]{KKT} (and other results \emph{ibid.}), we need to add a restriction on the geometry of the graph:
we say that a graph is \emph{non-pivotal} if there is no straight edge between $I$ and $O$ (a \emph{shortcut} edge), nor edges whose removal disconnect the graph (\emph{bridge} edges). For instance, the diamond graph is non-pivotal and, with the figure-eight graph (which was the leading example in~\cite{KKT}), it is the simplest graph which is non-pivotal. An example of pivotal graph is exhibited by what we call the \emph{racket} graph, which has bridge edges (see Fig.~\ref{f:racket}).
\begin{figure}
\[\includegraphics[scale=1]{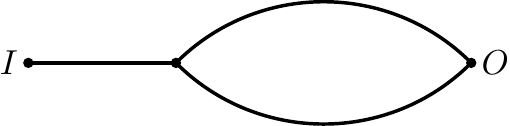}\]
\caption{The racket graph.}\label{f:racket}
\end{figure}

The different behaviour of the model on pivotal and non-pivotal graphs is partly detected by the \emph{percolation function} $\theta_\Gamma(p)$ that is defined as the probability that $I$ and $O$ are connected in $\Gamma$ when considering a Bernoulli percolation of parameter $p$ (see \cite[Def.~12]{KKT}). For non-pivotal graphs, $p=0$ and $p=1$ are super-attracting fixed points, while for pivotal graphs there is one which is (topologically) repelling.

\begin{rem}
This problem of finding a limit random metric was motivated by the so-called (mathematical) 2D Liouville field theory (see \cite[\S~3.3]{KKT}). Hierarchical graph have the advantage that the problem can be formulated in the language of RDEs. An interesting recent work, closer to the original problem, has been done recently by Ding and Dunlap \cite{DD}: in some special cases (high temperatures) they show the existence of a non-trivial random metric on the square, making approximations with random metrics on discrete square grids.
\end{rem}

\paragraph{Establishing assumptions --}

For any non-pivotal hierarchical graph, \ref{a:incr} is evident, as well as \ref{a:translation}: changing $X_i$ to $X_i+c$ multiplies the 
distances $\hat{X}_i$ by $e^c$, that multiplies the resulting distance $\hat{X}$ by the same constant (being a simple rescaling), and finally changes the value of $R$ by $\log e^c=c$.

For the remaining conditions, from~\cite[Lemma~2 and Prop.~12]{KKT} we can choose any sufficiently small $\alpha>0$ such that $\alpha$ and $1-\alpha$ belong respectively to the basins of attraction of $0$ and $1$ for the percolation function~$\theta_\Gamma$. Possibly choosing a smaller value, we suppose that for such an~$\alpha$, Proposition~\ref{l:shift} below holds. We set
\[\cQ=\{\mu\in \cP \mid \mu(\{\pm\infty\})\le \alpha\}.\]
and we equip it with a metric $d_\cQ$ defining the weak-$*$ topology. It is straightforward to verify that~$(\cQ,d_\cQ)$ satisfy the assumption~\ref{a:space}.

Condition~\ref{a:stationary} is given by~\cite[Thm.~1]{KKT} (as well as by the choice of the normalization constant~$\lambda$).
Pointwise convergence in condition~\ref{a:everywhere} is implied by~\cite[Thm.~2]{KKT}: if the initial measures belong to~$\cP_0\cap \cQ$, then it is exactly the statement of that theorem, otherwise, it is enough to choose~$\alpha$ from~\cite[Prop.~5]{KKT} (which must satisfy~\ref{a:everywhere}). For assumption~\ref{a:infinity}, uniform convergence on a neighbourhood of~$\bm$ can be proved with arguments similar to those for~\cite[Thm.~2]{KKT}, as we will explain with Lemma~\ref{l:uniform} below. Finally, condition \ref{a:superexp}  will be verified with Lemma~\ref{l:decrease_superexp}.

\paragraph{Reminder: upper and lower bounds --}\label{s:upper-lower}
Before establishing uniform convergence and superexponential decrease, 
let us remind some of the conclusions and arguments from~\cite[\S\,6]{KKT}, that we will be using and extending here. 
First, we have the following convergence result, that is given by a part of the conclusions of~\cite[Thm.~2]{KKT}, rewritten in the logarithmic scale:

\begin{prop}\label{p:t2}
Assume that the graph $\Gamma$ is non-pivotal and that the law $m$ of $\xi$ is given by $m=\rho(x) dx$, where $\rho:\R\to (0,+\infty)$ is continuous. 
Let $\bm\in\cP_0$ be a solution to the corresponding RDE. Then, for any initial measure $\mu\in \cP_0$ there exists $c=:\tc(\mu)$ such that $\Phi^n[\mu]\to \bm_c$ as $n\to\infty$.
\end{prop}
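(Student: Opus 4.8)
\textbf{Plan for the proof of Proposition~\ref{p:t2}.}
The statement is essentially a restatement, in logarithmic coordinates, of~\cite[Thm.~2]{KKT}, so my plan is to explain precisely which portion of that theorem is being invoked and why it transfers verbatim to the present setting. First I would recall that~\cite[Thm.~2]{KKT} is stated for the multiplicative model on the line: for a non-pivotal graph $\Gamma$ and a rescaling law $\hat m$ that is absolutely continuous with continuous positive density on $\R_+$, the iterates of the multiplicative operator $\hat\Phi$ applied to any starting law $\hat\mu$ supported on $\R_+$ converge weakly to a scaling of the stationary law $\hat{\bm}$. The scaling factor is exactly the multiplicative analogue of the additive center: there exists $\kappa=\kappa(\hat\mu)>0$ with $\hat\Phi^n[\hat\mu]\to(\hat{\bm})_{\times\kappa}$. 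Passing to logarithmic coordinates $X=\log\hat X$, $\xi=\log(\lambda\hat\xi)$ as in~\eqref{eq:Gamma-log} turns the multiplicative scaling into an additive translation, so $\kappa=e^{c}$ and the statement becomes $\Phi^n[\mu]\to\bm_c$ with $c=\tc(\mu):=\log\kappa(\hat\mu)$. Since $\mu\in\cP_0$ corresponds precisely to a law $\hat\mu$ supported on $\R_+$ (no atom at $0$ or $+\infty$), and the Gaussian (or normal-tailed, absolutely continuous, positive-density) hypothesis on $m$ is exactly the density hypothesis on $\hat m$, the translation is immediate.

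The one point that deserves a word is the \emph{existence} of the limit center, i.e.\ that the convergence genuinely takes place (not merely along subsequences) and that the limit is a \emph{single} translate of $\bm$ rather than, a priori, possibly $\ddelta_{\pm\infty}$. Here I would invoke the relevant part of~\cite[Thm.~2]{KKT}: for non-pivotal graphs the stationary law $\hat{\bm}$ is a global attractor on the space of laws on $\R_+$ modulo scaling, and the attraction is genuine (the value $p=0$ and $p=1$ are super-attracting fixed points of the percolation function $\theta_\Gamma$, which is what rules out escape to $0$ or $\infty$ for non-pivotal graphs). In particular, for $\mu\in\cP_0$ one never falls into a trivial limit, and the sequence $\Phi^n[\mu]$ does not oscillate between different translates because any two subsequential limits would both be stationary translates of $\bm$ with the same ``total mass distribution'' forced by monotone stochastic comparison, hence equal. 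This pins down $c=\tc(\mu)$ uniquely.

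\textbf{Main obstacle.} There is no real obstacle at the level of this proposition: it is a transcription, and the substantive work is entirely contained in~\cite[Thm.~2]{KKT}. The only care needed is bookkeeping — making sure that the hypotheses ``$\Gamma$ non-pivotal'' and ``$m$ absolutely continuous with continuous positive density'' line up exactly with the hypotheses of the cited theorem after the change of variables, and that the center $\tc$ defined here by~\eqref{eq:center} coincides with the log of the scaling constant produced there. The genuinely hard analytic statements that the rest of Section~\ref{s:metrics} still owes — uniform convergence on a neighbourhood of $\bm$ (Lemma~\ref{l:uniform}, needed for~\ref{a:infinity}) and the superexponential decay of $\Delta$ (Lemma~\ref{l:decrease_superexp}, needed for~\ref{a:superexp}) — are \emph{not} consequences of this proposition and will require the extra arguments announced in the text; I would flag that explicitly so the reader does not expect them here.
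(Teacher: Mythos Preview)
Your proposal is correct and matches the paper's own treatment: the paper does not give an independent proof of this proposition but simply introduces it as ``a part of the conclusions of~\cite[Thm.~2]{KKT}, rewritten in the logarithmic scale,'' which is exactly the transcription you describe. Your additional remarks on the bookkeeping (the dictionary between multiplicative scalings and additive translations, and the flag that Lemmas~\ref{l:uniform} and~\ref{l:decrease_superexp} are separate) are accurate and helpful, though the paper itself is terser and relegates those points to the surrounding text.
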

This proposition gives us the pointwise convergence needed in~\ref{a:everywhere}, although for measures from~$\cP_0$ only. However, the same arguments that were used in~\cite{KKT} imply the convergence in $\cQ$,
as well as its uniformity on some neighbourhood~$\cQ_0$ of~$\bm$. We recall here the main arguments. From now on, to fix notation, we shall write~$F_\mu$ for the distribution function of a measure~$\mu$.

\begin{dfn}\label{def:class}
A measure $\mu$ is of \emph{(upper) class $(\alpha,\delta)$} if the partition functions $F_{\mu}$ and $F_{\bm}$ satisfy the inequalities
\begin{itemize}
\item $F_{\mu}(x)\le \Fbm(x)$ on $[\kal,\kone]$,
\item $F_{\mu}(x)\le \Fbm(x)+\delta$ on $[-\infty,\kal)\cup (\kone,+\infty]$,
\end{itemize}
where $\kappa_{\alpha}$ and $\kappa_{1-\alpha}$ are the $\alpha$- and $(1-\alpha)$-quantiles of the measure $\bm$ respectively (see Fig.~\ref{f:c-a-d}).
We denote the set of such functions by $\mC_{\alpha,\delta}$.
\end{dfn}
\begin{figure}
\[
\includegraphics[width=.7\textwidth]{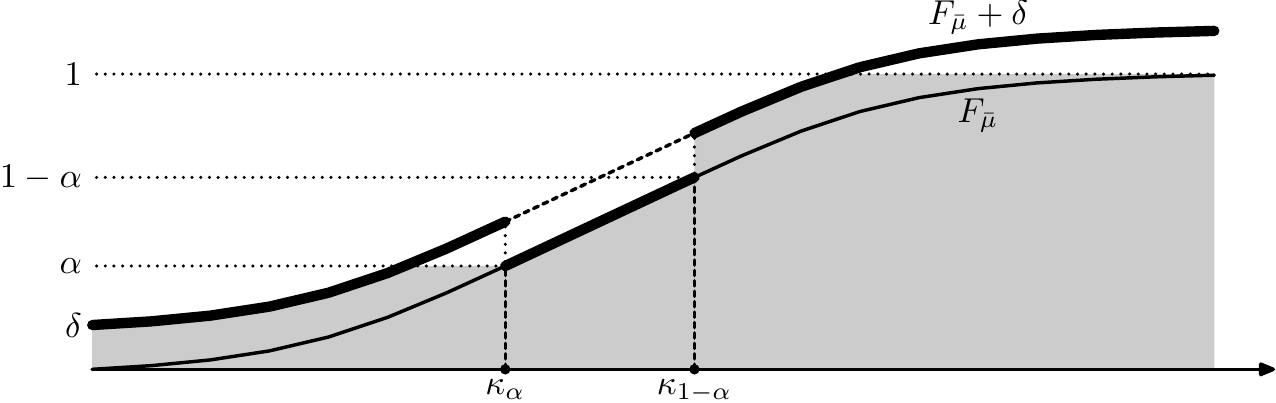}
\]
\caption{For a measure $\mu$ of class $(\alpha,\delta)$, the graph of its distribution function stays below the bold curves,
and by monotonicity it is contained in the grey-filled area.}\label{f:c-a-d}
\end{figure}
Note that there is no reason to consider 
$\delta>\alpha$: the class $\mC_{\alpha,\delta}$ for $\delta>\alpha$ coincides with~$\mC_{\alpha,\alpha}$. 
We also consider the family of translation operators $T_r$: 
\begin{dfn}
For any $r\in\R$, the operator $T_r$ is defined by sending any measure $\mu\in\cP$ to the  shifted measure $T_r[\mu]:=\mu(\cdot-r)$.
\end{dfn}

Two propositions from~\cite{KKT}  describe the iterations of measures, when starting from measures of some class $(\alpha,\delta)$. 
The first of them states that the $\Phi$-image of a measure of class $(\alpha,\delta)$, after a slight translation, belongs to an even ``better''
class:
\begin{prop}[\mbox{\cite[Prop. 5]{KKT}}]\label{l:shift}
For every  sufficiently small $\alpha$, there exists a constant $L=L(\alpha)>0$ such that for every $\delta\in (0,\alpha]$ the operator 
$T_{L\delta}\Phi$ sends $\mC_{\alpha,\delta}$ in $\mC_{\alpha,\delta/2}$.
\end{prop}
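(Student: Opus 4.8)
\textbf{Proof plan for Proposition~\ref{l:shift}.}
The statement to be proved (``\cite[Prop.~5]{KKT}'') asserts that for sufficiently small $\alpha$ there is $L=L(\alpha)$ such that $T_{L\delta}\Phi(\mC_{\alpha,\delta})\subset \mC_{\alpha,\delta/2}$ for all $\delta\in(0,\alpha]$. The plan is to work entirely with distribution functions and exploit the order-preserving and translation-equivariant structure of $\Phi$ coming from assumptions~\ref{a:incr} and~\ref{a:translation}. The guiding idea is that applying $\Phi$ to a measure that is stochastically below $\bm$ on the ``bulk'' $[\kal,\kone]$ strictly contracts the defect there (because in the recursion $R(X_1,\dots,X_d;\xi)=\xi+\log\min_\pi\sum_{e\in\pi}e^{X_e}$ the value $R$ depends on several independent copies, so a small per-copy improvement compounds), while the mass $\delta$ sitting in the tails can only migrate inward in a controlled way, costing at most a translation of size proportional to $\delta$ to reabsorb.

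First I would set up notation: write $\mu\in\mC_{\alpha,\delta}$, $\nu=\Phi[\mu]$, and express $F_\nu$ in terms of $F_\mu$ via the path structure of the (non-pivotal) graph $\Gamma$, i.e. $F_\nu(x)=\bP\big(\xi+\log\min_\pi\sum_{e\in\pi}e^{X_e}\le x\big)$ with the $X_e$ i.i.d.~$\sim\mu$ and $\xi\sim m$; the analogous formula holds for $F_{\bm}$ with $\bm$ in place of $\mu$. Because $R$ is non-decreasing in each $X_e$ and $F_\mu\le F_{\bm}$ on $[\kal,\kone]$, a coupling argument gives $F_\nu\le F_{\Phi[\bm]}=F_{\bm}$ on a slightly shrunk bulk; the point is to get a \emph{quantitative} improvement. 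Here I would use that $m=\rho\,dx$ has continuous everywhere-positive density, so that convolution with $m$ (the ``$\xi+$'' step) smooths and strictly pushes the defect down by a factor bounded away from $1$ on any fixed compact set of $x$-values; combined with the compounding over the $d$ independent children in $\min_\pi\sum_{e\in\pi}e^{X_e}$, one extracts a definite contraction constant depending only on $\alpha$ and $\Gamma$ (using non-pivotality to ensure every edge genuinely participates in some path and no single edge is a bridge or shortcut, which is exactly what prevents the defect from being transported rigidly). This yields $F_\nu\le F_{\bm}$ on $[\kal,\kone]$ with room to spare, so that after a small leftward translation $T_{L\delta}$ the inequality $F_{T_{L\delta}\nu}\le F_{\bm}$ persists on the bulk.

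Next I would handle the tails. On $[-\infty,\kal)\cup(\kone,+\infty]$ one only knows $F_\mu\le F_{\bm}+\delta$; the worst case is that a full $\delta$ of mass is concentrated near the tails of $\mu$. Pushing this through $\Phi$, the extra tail mass can contribute at most $O(\delta)$ to $F_\nu-F_{\bm}$ uniformly, and more importantly the continuity/positivity of $\rho$ and the strict monotonicity of $F_{\bm}$ near $\kal$ and $\kone$ let me convert the bulk ``slack'' into a reduction of the tail defect after translating by $L\delta$: translating left by $cL\delta$ decreases $F$ near $\kone$ by an amount $\gtrsim L\delta$ (since $F_{\bm}$ has positive derivative there), and choosing $L$ large enough makes this beat the $O(\delta)$ tail growth, bringing the tail defect below $\delta/2$; simultaneously $L$ must be small enough (or rather the bulk contraction strong enough) that the left translation does not violate the bulk inequality. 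The balance of these two requirements is what fixes $L=L(\alpha)$, and linearity in $\delta$ of every estimate is what makes a \emph{single} $L$ work for all $\delta\in(0,\alpha]$.

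The main obstacle I anticipate is making the bulk contraction genuinely quantitative and uniform in $\delta$: one must show that the per-step improvement of $F_\bm-F_\mu$ on $[\kal,\kone]$ is by a multiplicative factor $<1$ (not merely $\le 1$), and that this factor, as well as the tail-leakage constant, depend only on $\alpha$ and the fixed data $(\Gamma,m,\bm)$ and not on $\mu$ or $\delta$. This is where non-pivotality is essential — it guarantees that no edge is a shortcut (which would let the defect pass undamped) or a bridge (which would decouple the compounding) — and where the explicit min-plus form of $R$ together with positivity of $\rho$ must be used honestly rather than through soft monotonicity alone. Once that uniform contraction estimate is in hand, the choice of $L$ and the verification that $T_{L\delta}\nu\in\mC_{\alpha,\delta/2}$ are routine bookkeeping with distribution functions.
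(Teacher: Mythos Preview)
Your plan reverses the two mechanisms that actually drive the argument in~\cite{KKT}, and the reversal creates a genuine gap. In the paper's sketch (see the paragraph following the statement of Proposition~\ref{l:shift}), the coupling argument yields only the uniform bound $F_{\Phi[\mu]}(x)\le F_{\bm}(x)+\const\cdot\delta$ \emph{everywhere}; it does \emph{not} give $F_\nu\le F_{\bm}$ on the bulk ``with room to spare''. The reason is exactly the leakage you try to treat later: the children $X_e$ sit in the tail region with positive probability, and there one only knows $F_\mu\le F_{\bm}+\delta$, so the bulk inequality for $F_\nu$ is polluted by an $O(\delta)$ term. Neither the convolution with $m$ nor the ``compounding over $d$ children'' produces a multiplicative contraction of $F_{\bm}-F_\mu$ on the bulk: convolution is linear and order-preserving, and on the bulk the relevant probabilities are bounded away from $0$ and $1$, so there is no squaring effect. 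The translation $T_{L\delta}$ is therefore needed precisely to absorb the $\const\cdot\delta$ defect \emph{in the bulk}, where the density of $\bm$ is bounded below by some $1/L$.

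Conversely, your plan to repair the tails by translation cannot work. The class condition on $(\kone,+\infty]$ (and symmetrically on $[-\infty,\kal)$) must hold at \emph{every} point of the tail, not just near $\kone$; far out, the density of $\bm$ tends to zero, so shifting by $L\delta$ changes $F_{\bm}$ by an amount that is $o(\delta)$ and cannot bring a defect of size $C\delta$ below $\delta/2$. The actual contraction in the tails comes from non-pivotality in a different way than you describe: absence of shortcut edges means that a large value of $R$ requires at least two independently large children (``parallel'' edges), and absence of bridges means a small value requires at least two independently small children (``consecutive'' edges); hence the tail probabilities of $\Phi[\mu]$ are essentially squares of those of $\mu$, turning $\delta$ into $O(\delta^2)\le\delta/2$ for small $\delta$. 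You should restructure the argument accordingly: coupling gives $F_{\Phi[\mu]}\le F_{\bm}+C\delta$ globally; non-pivotality gives $F_{\Phi[\mu]}\le F_{\bm}+\delta/2$ on the tails; and $T_{L\delta}$ with $L$ determined by the infimum of the density of $\bm$ on a fixed compact restores $F\le F_{\bm}$ on the bulk without spoiling the tail bound.
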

In short, a coupling argument allows to get a uniform estimate for the partition functions $F_{\Phi[\mu]}(x)\le F_{\bm}(x)+\const \cdot \delta$; a translation by $L\delta$ then removes the correction $\const \cdot \delta$ in the center (as the density of $\bm$ is positive, in a compact domain it is bounded away from zero by some constant~$\frac{1}{L}$). On the other hand, the tail of the image depends on the tail of the starting measure in a way resembling a contracting operator. Roughly speaking, the reason is that the graph is non-pivotal, so it takes at least two large (``parallel'') edges to make a large distance in the glued graph, and it takes at least two short (``consecutive'') edges to make a short distance in the glued graph: this leads to a nearly squaring (at least) of the small probabilities of these events. We refer the reader to~\cite[\S\,6.1]{KKT} for details.

From now on, let us fix any $\alpha$ and $L$ such as in Proposition~\ref{l:shift}. Applying this proposition inductively provides us with the following proposition:
\begin{prop}[\mbox{\cite[Prop.~7]{KKT}}]\label{c:upper}
Given a measure $\mu\in\cP$  of class $(\alpha,\delta)$, for any $n$ one has
that for any $x\in\R$,
\[
F_{\Phi^n[\mu]}(x)\le F_{\bm_{-2L\delta}}(x)+\delta/2^n.
\]
\end{prop}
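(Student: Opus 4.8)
The plan is to prove this by induction on $n$, iterating the one-step estimate of Proposition~\ref{l:shift} and keeping careful track of how the translations accumulate. The base case $n=0$ is immediate: if $\mu\in\mC_{\alpha,\delta}$, then by Definition~\ref{def:class} we have $F_\mu(x)\le F_{\bm}(x)+\delta\le F_{\bm_{-2L\delta}}(x)+\delta$ on the tails (using that $\bm_{-2L\delta}\preceq\bm$, so $F_{\bm}\le F_{\bm_{-2L\delta}}$), and on the central interval $[\kal,\kone]$ we have $F_\mu(x)\le F_{\bm}(x)\le F_{\bm_{-2L\delta}}(x)$, so the claimed inequality holds with room to spare.

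For the inductive step, the key observation is that $T_{L\delta}\Phi$ sends $\mC_{\alpha,\delta}$ into $\mC_{\alpha,\delta/2}$ by Proposition~\ref{l:shift}. So, starting from $\mu\in\mC_{\alpha,\delta}$, after applying $\Phi$ once and translating by $L\delta$ we land in $\mC_{\alpha,\delta/2}$; applying $\Phi$ again and translating by $L\delta/2$ we land in $\mC_{\alpha,\delta/4}$; and in general, after $n$ steps, the measure
\[
\nu_n:=T_{L\delta/2^{n-1}}\Phi\cdots T_{L\delta/2}\Phi\, T_{L\delta}\Phi[\mu]
\]
lies in $\mC_{\alpha,\delta/2^n}$. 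Since $\Phi$ commutes with translations (assumption~\ref{a:translation}, which in the logarithmic scale means $\Phi\circ T_r=T_r\circ\Phi$), we can collect all the translations and write $\nu_n=T_{r_n}\Phi^n[\mu]$ with $r_n=L\delta(1+\tfrac12+\cdots+\tfrac1{2^{n-1}})=2L\delta(1-2^{-n})<2L\delta$. Because $\nu_n\in\mC_{\alpha,\delta/2^n}$, Definition~\ref{def:class} gives $F_{\nu_n}(x)\le F_{\bm}(x)+\delta/2^n$ everywhere (this weaker two-sided bound suffices for the conclusion we want). Undoing the translation, $F_{\Phi^n[\mu]}(x)=F_{\nu_n}(x+r_n)\le F_{\bm}(x+r_n)+\delta/2^n=F_{\bm_{-r_n}}(x)+\delta/2^n$. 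Finally, since $r_n<2L\delta$ we have $\bm_{-2L\delta}\preceq\bm_{-r_n}$, hence $F_{\bm_{-r_n}}(x)\le F_{\bm_{-2L\delta}}(x)$, which yields the desired bound $F_{\Phi^n[\mu]}(x)\le F_{\bm_{-2L\delta}}(x)+\delta/2^n$ for all $x\in\R$.

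The only point requiring slight care is the bookkeeping of the translation directions and the sign conventions in the logarithmic scale: one must check that the translations produced by Proposition~\ref{l:shift} all push in the same direction (to the right, i.e.\ by positive amounts $L\delta/2^k$), so that their sum is controlled by the geometric series $2L\delta$, and that $T_r$ acting on the distribution function corresponds to the claimed shift of $\bm$. There is no genuine analytic obstacle here — the content is entirely in Proposition~\ref{l:shift}, whose proof is recalled (and attributed to \cite{KKT}) in the paragraph preceding the statement; the present proposition is purely its telescoped consequence.
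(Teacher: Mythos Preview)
Your proof is correct and follows exactly the approach the paper indicates: iterate Proposition~\ref{l:shift}, use translation-equivariance of~$\Phi$ to collect the shifts $L\delta/2^k$ into a single translation by $r_n=2L\delta(1-2^{-n})<2L\delta$, and then read off the conclusion from membership in $\mC_{\alpha,\delta/2^n}$. The paper itself does not spell this out here (it cites \cite[Prop.~7]{KKT} and merely says ``applying this proposition inductively''), so your write-up is precisely the intended telescoped argument.
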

We recall the following definition, motivated by Proposition~\ref{c:upper} (cf. \cite[\S\,6.2, Def.~8]{KKT})
\begin{dfn}
A measure $\mu$ is \emph{asymptotically upper bounded} (resp. \emph{lower bounded}) by a translation~$\bm_c$ of the stationary measure~$\bm$, if for any $\eps>0$ there exists $n_0$ such that for any $n>n_0$ and~$x\in\R$,
\[
F_{\Phi^n[\mu]}(x)\le F_{\bm_c}(x)+\eps \quad \text{(resp. } \, F_{\Phi^n[\mu]}(x)\ge F_{\bm_c}(x)-\eps \text{).}
\]
\end{dfn}

\paragraph{Uniform convergence and superexponential decrease estimate --}\label{s:uniform}
We can now show that the assumption \ref{a:convergence} holds:

\begin{lem}\label{l:uniform}
For any compact set $\mK$ in the space $\cQ$ the convergence 
\[
\Phi^n[\mu] \to \bm_{\tc(\mu)} 
\]
is uniform on $\mK$ and the function $\tc$ is continuous on~$\mK$.
\end{lem}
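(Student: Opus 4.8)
The plan is to upgrade the pointwise convergence of Proposition~\ref{p:t2} (and its extension to $\cQ$) to uniform convergence on compacta, using the ``better-class'' estimates from \cite{KKT} recalled above, together with a compactness/continuity bootstrap. First I would reduce to the case of a compact set $\mK\subset\cQ$. Each $\mu\in\mK$ has mass at most $\alpha$ at $\{\pm\infty\}$, so after finitely many iterations of $\Phi$ the tails improve; more precisely, I want to show that there exist $n_1\in\N$ and $\delta_0\in(0,\alpha]$, uniform over $\mu\in\mK$, such that $\Phi^{n_1}[\mu]$ is, up to a uniformly bounded translation $T_{r(\mu)}$ with $r(\mu)$ ranging in a compact interval, a measure of class $(\alpha,\delta_0)$. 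This step relies on the ``contracting tail'' mechanism behind Proposition~\ref{l:shift}: the non-pivotality forces near-squaring of small tail probabilities, so the mass near $\pm\infty$ is driven below any prescribed $\delta_0$ in a number of steps that depends only on $\alpha$ (hence uniform on $\mK$), while the translations needed to re-center stay in a fixed compact set by an a priori bound on how far $\Phi$ can move quantiles of measures in $\mK$ (itself a consequence of compactness of $\mK$ and continuity of $\Phi$ on $\cQ$).

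Next I would invoke Proposition~\ref{c:upper} to get, for every $\mu\in\mK$ and every $n\ge n_1$,
\[
F_{\Phi^{n}[\mu]}(x)\le F_{\bm_{\,r(\mu)-2L\delta_0}}(x)+\delta_0/2^{\,n-n_1}\qquad(x\in\R),
\]
and the symmetric lower bound $F_{\Phi^{n}[\mu]}(x)\ge F_{\bm_{\,r(\mu)+2L\delta_0}}(x)-\delta_0/2^{\,n-n_1}$ obtained by running the same argument with the lower cut-off / reversed inequalities (the lower-bound analogue of Proposition~\ref{c:upper}, which is the content of the companion statement in \cite[\S\,6]{KKT}). These bounds are uniform in $\mu\in\mK$ because $n_1$, $\delta_0$ and the compact range of $r(\mu)$ are. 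Sandwiching, $\Phi^{n}[\mu]$ lies within a $(2L\delta_0+\delta_0/2^{n-n_1})$-neighbourhood — say in the Lévy/Kolmogorov sense, hence in $d_{\cQ}$ up to a modulus of continuity — of the segment $\{\bm_c : |c-r(\mu)|\le 2L\delta_0\}$. Letting $\delta_0\to 0$ (i.e.\ increasing $n_1$) shows that the orbit of every $\mu\in\mK$ is eventually trapped, uniformly, in arbitrarily thin neighbourhoods of the line $\{\bm_c\}_{c\in\R}$; combined with pointwise convergence $\Phi^n[\mu]\to\bm_{\tc(\mu)}$ from Proposition~\ref{p:t2} (extended to $\cQ$), this pins down that the center $c$ in these bounds must converge to $\tc(\mu)$.

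To turn trapping near the line into genuine uniform convergence, I would run a second, finite-horizon argument on the line itself: once $\Phi^{n_1}[\mu]$ is $\eta$-close to some $\bm_{c(\mu)}$ with $c(\mu)$ in a fixed compact interval $J$, continuity of $\Phi$ on $\cQ$ and the fact that $\bm_c$ is a fixed point of $T_{-c}\Phi T_c$ give, for each fixed $k$, that $\Phi^{n_1+k}[\mu]$ is $\eta_k$-close to $\bm_{c(\mu)}$ with $\eta_k\to 0$ as $\eta\to 0$ uniformly in $c(\mu)\in J$ (here one uses that $\{\bm_c:c\in J\}$ is compact, so the modulus of continuity of $\Phi^k$ near it is uniform), and then Proposition~\ref{c:upper} with an even smaller class parameter controls the tail for all larger $n$ simultaneously. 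A standard $\eps/3$ splitting — choose $\delta_0$ small so $2L\delta_0<\eps/3$, choose the horizon so the geometric term is $<\eps/3$, and absorb the ``near the line'' error $<\eps/3$ — yields $d_{\cQ}(\Phi^n[\mu],\bm_{\tc(\mu)})<\eps$ for all $\mu\in\mK$ and all $n$ past a bound depending only on $\mK$ and $\eps$. Continuity of $\tc$ on $\mK$ then follows for free: $\tc(\mu)$ is the uniform limit on $\mK$ of the continuous functions $\mu\mapsto$ (barycenter, or median, of $\Phi^n[\mu]$) minus the corresponding quantity for $\bm$, and a uniform limit of continuous functions is continuous.

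The main obstacle I anticipate is the first step — obtaining the \emph{uniformity over $\mK$} of both the number of iterations $n_1$ needed to enter a good class $(\alpha,\delta_0)$ and the compactness of the range of re-centering translations $r(\mu)$. Pointwise, each $\mu\in\cQ$ enters a good class after finitely many steps by the \cite{KKT} arguments, but making the step count locally bounded requires either a quantitative version of the tail-contraction estimate that is uniform on weak-$*$ compact subsets of $\cQ$, or a compactness argument showing the ``entry time'' is upper semicontinuous in $\mu$; controlling the drift $r(\mu)$ similarly needs an a priori bound on quantile displacement along the orbit that does not blow up over $\mK$. Everything downstream (Propositions~\ref{l:shift} and~\ref{c:upper}, the $\eps/3$ argument, continuity of $\tc$) is then routine.
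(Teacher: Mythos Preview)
Your overall architecture is reasonable, and the second half (once you are in a good class, apply Proposition~\ref{c:upper} and its lower analogue, then sandwich) matches the paper. The gap is exactly where you flag it: your first step asks for a single $n_1$, a single $\delta_0$, and a \emph{single} bounded translation $r(\mu)$ so that $T_{r(\mu)}\Phi^{n_1}[\mu]$ lies simultaneously in the upper and the lower class $(\alpha,\delta_0)$, uniformly over $\mu\in\mK$. Tail contraction alone does not give this. Belonging to the upper class requires $F_{\Phi^{n_1}[\mu]}\le F_{\bm}$ on the core $[\kappa_\alpha,\kappa_{1-\alpha}]$ after translation, and the lower class requires the reverse inequality; the only way both hold with the \emph{same} $r(\mu)$ (up to $O(\delta_0)$) is if $\Phi^{n_1}[\mu]$ is already uniformly close to some $\bm_c$ on the core. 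That is essentially the statement you want to prove, so the argument as written is circular, and neither ``quantile displacement bounds'' nor ``upper semicontinuity of the entry time'' resolves it.

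The paper avoids this by working \emph{locally}. Fix $\mu\in\mK$; by the pointwise convergence of Proposition~\ref{p:t2} there is $n_0=n_0(\mu)$ with $F_{\Phi^{n_0}[\mu]}$ within $\delta/2$ of $F_{\bm_{\tc(\mu)}}$. A single translation by $L\delta$ then puts $\Phi^{n_0}[\mu]$ in the class $\mC_{\alpha,\delta}$ (relative to the center $\tc(\mu)$). Now continuity of the finite iterate $\Phi^{n_0}$ on $\cQ$ propagates this to an open neighbourhood $U$ of~$\mu$: for every $\mu'\in U$ the same $n_0$ and the same translation work. Proposition~\ref{c:upper} and its lower counterpart then give, for all $n>n_0$ and all $\mu'\in U$, the sandwich
\[
F_{\bm_{\tc(\mu)+3L\delta}}(x)-\delta/2^{n-n_0}\ \le\ F_{\Phi^n[\mu']}(x)\ \le\ F_{\bm_{\tc(\mu)-3L\delta}}(x)+\delta/2^{n-n_0},
\]
so $|\tc(\mu')-\tc(\mu)|\le 3L\delta$ (continuity of $\tc$) and the convergence is uniform on~$U$. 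Finally, cover the compact set $\mK$ by finitely many such neighbourhoods to upgrade local uniformity to uniformity on~$\mK$. The point is that you never need a globally uniform entry time: pointwise convergence supplies $n_0(\mu)$, continuity spreads it to a neighbourhood, and compactness makes finitely many neighbourhoods suffice. That local-to-global step is the idea your outline is missing.
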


\begin{proof}
Take any measure $\mu\in\mK$. By Proposition~\ref{p:t2} (or \cite[Thm.~2]{KKT}) and the definition of $\tc$, 
the measures $\Phi^n[\mu]$ converge to the measure $\bm_{\tc(\mu)}$ and hence the 
distribution functions of $\Phi^n[\mu]$ converge uniformly to the (continuous) distribution function of $\bm_{\tc(\mu)}$. In 
particular, for an arbitrarily small~$\delta>0$ there exists $n_0$ such that the partition function of $\Phi^{n_0}[\mu]$
is $\delta/2$-close to the one of $\bm_{\tc(\mu)}$. 

For what follows, let us assume~$\tc(\mu)=0$ (otherwise we translate everything by~$-\tc(\mu)$ and use the translation-equivariance).
Use this choice and apply again (as in Proposition~\ref{l:shift}) a translation by~$L\delta$ to handle the ``core'' part. The distribution function of the obtained measure $T_{L\delta} \Phi^{n_0}[\mu]$ then belongs to the class $\mC_{(\alpha,\delta)}$. Moreover, 
due to the continuity of $\Phi^{n_0}$, the same holds for any measure~$\mu'$ belonging to some neighbourhood~$U$ 
of the initial measure~$\mu$. 

The application of Proposition~\ref{c:upper} gives that for any $n>n_0$ and for any $\mu'\in U$ the image 
\[T_{L\delta}\Phi^n[\mu']=\Phi^{n-n_0}[T_{L\delta}\Phi^{n_0}[\mu']]\]
is asymptotically upper bounded by $\bm_{-2L\delta}$, and thus for any $n>n_0$ and $x\in\R$,
\[
F_{\Phi^n[\mu']}(x)\le T_{-3L\delta}[F_{\bm}](x)+\delta/2^{n-n_0}.
\]

In fact, as $\Phi^{n_0}[\mu]$ is $\delta/2$-close to $\bm$, working with measures in \emph{lower classes $(\alpha,\delta)$}, we can repeat the same arguments to find asymptotically lower bounds (possibly decreasing the choice of $\alpha$): for any $n>n_0$ and $\mu'\in U$, for any $x\in\R$,
\[
F_{\Phi^n[\mu']}(x)\ge T_{3L\delta}[F_{\bm}](x)-\delta/2^{n-n_0}.
\]
Then passing to the limit in these upper and lower bounds, since we know that $\Phi^n[\mu']$ converges to~$\bm_{\tc(\mu')}$, we have
\[
T_{3L\delta}[F_{\bm}](x)\le F_{\bm_{\tc(\mu')}}(x) \le T_{-3L\delta}[F_{\bm}](x)\quad\text{for any }x\in\R
\]
and thus $|\tc(\mu')|\le 3L\delta$,
which proves the continuity part of the lemma (cf.~\cite[Prop.~8]{KKT}). Finally, we see that for every measure $\mu'\in U$, the images $\Phi^{n}[\mu']$, for $n\ge n_0$, stay uniformly close to the corresponding translates of~$\bm$. Together with the compactness of $\mK$, this ensures the desired uniformity.
\end{proof}

The next lemma guarantees the last condition~\ref{a:superexp}:

\begin{lem}\label{l:decrease_superexp}
The function $\Delta$ decreases superexponentially: for every $\beta>0$,
\[
\Delta(a)=o\left (e^{-\beta a}\right )\quad \text{as }a\to+\infty.
\]
\end{lem}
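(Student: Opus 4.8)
The goal is to estimate $\Delta(a) = -\tilde c(\Phi_a[\bm])$, which measures how far the center of $\bm$ is pushed to the left after applying a single cut-off at level $a$, and to show this decay is faster than any exponential.

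My plan is as follows. First I would quantify the difference between $\Phi_a[\bm]$ and $\bm = \Phi[\bm]$ in terms of distribution functions. Since $R_a = \min(R, a)$, we have $\Phi_a[\bm] \succeq \bm$ in stochastic order is false in the wrong direction: actually $R_a \le R$, so $\Phi_a[\bm] \preceq \Phi[\bm] = \bm$, i.e. the cut-off measure is stochastically below $\bm$. More precisely, $F_{\Phi_a[\bm]}(x) = F_{\bm}(x)$ for $x < a$ and $F_{\Phi_a[\bm]}(x) = 1$ for $x \ge a$. So $\Phi_a[\bm]$ differs from $\bm$ only by the amount of mass that $\Phi[\bm]$ puts above level $a$, which is $1 - F_{\bm}(a) = \bm((a,+\infty])$. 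The key input here is the Gaussian (normal) tail of $\bm$: from the log-normal rescaling law, the stationary solution $\bm$ has normal tails at $+\infty$, so $1 - F_{\bm}(a) \le e^{-\gamma a^2}$ for large $a$ and some $\gamma > 0$ (I would cite the relevant tail estimate from \cite{KKT}, or rather re-derive it from the structure of the RDE — a single edge of the graph contributes a Gaussian factor $\xi$, so $\bm$ cannot have heavier-than-Gaussian upper tail).

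Second, I would convert this tail bound into a bound on the center displacement. Consider the measure $\mu = \Phi_a[\bm]$: by the first bullet, $F_\mu(x) \le F_{\bm}(x) + \delta$ everywhere with $\delta = 1 - F_{\bm}(a)$, and in fact $F_\mu = F_{\bm}$ on $[-\infty, a)$, so for $a$ large enough that $[\kal, \kone] \subset [-\infty, a)$, the measure $\mu$ is of class $(\alpha, \delta)$ in the sense of Definition~\ref{def:class} (with $\delta = 1 - F_{\bm}(a)$, provided $\delta \le \alpha$, which holds for large $a$). Then Proposition~\ref{c:upper} applies: $F_{\Phi^n[\mu]}(x) \le F_{\bm_{-2L\delta}}(x) + \delta/2^n$ for all $n$. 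Passing to the limit $n \to \infty$ and using that $\Phi^n[\mu] \to \bm_{\tc(\mu)}$, we get $F_{\bm_{\tc(\mu)}}(x) \le F_{\bm_{-2L\delta}}(x)$ for all $x$, i.e. $\tc(\mu) \ge -2L\delta$, i.e. $-\tc(\Phi_a[\bm]) \le 2L\delta$. Therefore
\[
\Delta(a) \le 2L\,(1 - F_{\bm}(a)) \le 2L\, e^{-\gamma a^2}
\]
for all sufficiently large $a$, and since $e^{-\gamma a^2} = o(e^{-\beta a})$ for every $\beta > 0$, the superexponential decay follows.

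The main obstacle I anticipate is pinning down the Gaussian upper tail of the stationary measure $\bm$ rigorously — this is where the specific hypothesis (log-normal $m$) is essential, and one must make sure the fixed point $\bm$ inherits the normal tail from $m$ rather than developing a fatter tail through the $\min$-of-sums operation. The direction of the inequality is favorable: the shortest-path structure (minimum over paths) can only make the upper tail lighter, not heavier, since a long $IO$-distance requires every path to be long, hence (the graph being non-pivotal) at least two parallel edges to be long; this multiplies small tail probabilities and keeps the tail sub-Gaussian. I would either quote this from the tail analysis in \cite[\S\,6]{KKT} or include a short self-contained argument along these lines. The remaining steps — reducing to the class $(\alpha,\delta)$ and invoking Proposition~\ref{c:upper} — are routine given the machinery already recalled in the excerpt.
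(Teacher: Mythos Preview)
Your proposal is correct and follows essentially the same route as the paper: show that $\Phi_a[\bm]$ is of class $(\alpha,\delta)$ with $\delta = 1 - F_{\bm}(a)$, invoke Proposition~\ref{c:upper} to bound $|\tc(\Phi_a[\bm])|\le 2L\delta$, and then feed in the tail decay of~$\bm$. The only cosmetic difference is that the paper cites the superexponential tail bound $1-F_{\bm}(a)=o(e^{-\beta a})$ directly from \cite[Lemma~19]{KKT} rather than arguing for a Gaussian tail; either suffices, and your heuristic for why the upper tail is light is precisely the mechanism behind that lemma.
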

\begin{proof}
According to \cite[Lemma~19]{KKT}, the tail distribution function $f_{\bm}(x)=1-F_{\bm}(x)$ of $\bm$ has a superexponential decrease: for any $\beta>0$,
\[
f_{\bm}(s)=o\left (e^{-\beta s}\right )\quad\text{as }s\to+\infty.
\]
Consider a cut-off measure $\Phi_a[\bm]$ for any $a>\kone$. This measure is of class $(\alpha,f_{\bm}(a))$.
Applying Proposition~\ref{c:upper} in the same way as in the proof of Lemma~\ref{l:uniform}, we see that $|\tc(\Phi_a[\bm])|\le 2Lf_{\bm}(a),$
while by definition $\Delta(a)=-\tc(\Phi_a[\bm])$. Thus, we get the desired upper bound
\[
\Delta(a)\le 2L f_{\bm}(a) = o(e^{-\beta a}) \quad\text{as } a\to+\infty,
\]
for any $\beta>0$.
\end{proof}

\section{Mean-field optimization problems}\label{s:general}

We shall only describe briefly the combinatorial models: the references mentioned within this section provide neat expositions.

\paragraph{Mean-field minimal matching in pseudo-dimension~$q$ --} 
Let $K_{n,n}$ be the complete $n\times n$ bipartite graph, with i.i.d.~edge-lengths. As the name suggests, the mean-field minimal matching problem consists in describing the law of the minimal total length (or cost) for a perfect matching. M\'ezard and Parisi, with methods borrowed from statistical physics \cite{MP0,MP2}, guessed the asymptotic behaviour of this random total cost.
After Aldous \cite{A-assign}, the problem can be formulated involving RDEs.

We resume notation from~\cite[\S\,7.4]{RDE} (see also \cite{zeta2}): let a pseudo-dimension $q>0$ be chosen. Take $\xi_1<\xi_2<\dots$ to be jump points of a Poisson
process with intensity $x^{q-1}dx$; namely, the expectation of the number of $i$'s such that $\xi_i<x$, is equal to $x^q/q$ for any $x>0$.

The mean-field minimal matching problem is associated with the RDE~\cite[Eq.~(94)]{RDE}:
\begin{equation}\label{eq:minimal}
X\stackrel{d}{=} \min_{1\le i<\infty} (\xi_i-X_i).
\end{equation}

\paragraph{Mean-field TSP and minimum weight $k$-factor --}
Let $k\ge 1$ be an integer. The previous mean-field optimization problem can be generalized (as done in \cite{K}) looking at $k$-factors instead of perfect matchings: a $k$-factor is a spanning $k$-regular subgraph. A $1$-factor is exactly a perfect matching. As above, the problem of describing (asymptotically) the law of the minimal $k$-factor can be formulated using RDEs: with the same notations as above the equation is
\begin{equation}\label{eq:TSPk}
X\stackrel{d}{=} {\min_{1\le i<\infty}}^{[k]} (\xi_i-X_i),
\end{equation}
where $\min^{[k]}$ stays for the $k$th smallest element of the set.

When $k=2$, this RDE is also associated with the solution to the mean-field approximation to the travelling salesman problem (see \cite[Eq.~(95)]{RDE} and \cite{frieze}): this is rather intuitive, for a $2$-regular spanning subgraph is union of closed non-self-intersecting loops.

\paragraph{Tail distribution functions: cavity equations --}
From statistical physics, the equations~\eqref{eq:minimal} and \eqref{eq:TSPk} take the name of \emph{cavity equations}, after M\'ezard and Parisi (see for instance \cite{MP1,MP2}). They are sometimes expressed in terms of \emph{tail distribution functions} and it is worth to present them in this form: it will prepare the reader for the proof of Theorem~\ref{t:TSP}.

First, let $\{\xi_i\}$ be a Poisson point process with intensity $\rho( x )dx$, and $\{X_i\}$ i.i.d.~random variables of law $\mu$, independent of $\{\xi_i\}$. Then the point process $\{\xi_i-X_i\}$ is also given by a Poisson process, this time, with intensity $\tilde\rho(x)dx$, given by a (subtractive) convolution
\[
\tilde\rho(x)=\int^{+\infty}_{-\infty}\rho(y+x)d\mu(y)
\]
(this is the so-called ``displacement theorem'' \cite[\S~5.5]{kingman}, see also \cite[Lemma~5]{zeta2}).

In particular, the number of points of this new process smaller than any given $x$ is a Poisson random variable with parameter $\tilde\lambda(x)=\int_{-\infty}^x\tilde\rho(y)dy$.
Thus, if $f(x)=\bP(X_i>x)$ is the tail distribution function of some $X_i$, then the parameter of the new Poisson random variable is given by
\[
\tilde\lambda(x)=I[f](x):=\int_{-\infty}^{+\infty} \rho(x+y)f(y)dy .
\]
Next, a Poisson random variable with parameter $\lambda$ is less than $k$ with probability 
\begin{equation}\label{eq:P}
\mP_k(\lambda):=e^{-\lambda} \sum_{j=0}^{k-1} \frac{\lambda^j}{j!}, 
\end{equation}
and thus the tail distribution function of ${\min_{1\le i<\infty}}^{[k]} (\xi_i-X_i)$ is equal to $\mP_k(I[f](x))$.

A tail distribution function $f$ solves the cavity equation (that is, $f$ is the tail distribution function of a random variable solution to the RDE \eqref{eq:TSPk}), if and only if
\[
f=P_k\circ I[f].
\]
When the pseudo-dimension $q$ is equal to $1$ in \eqref{eq:minimal}, Aldous proved in \cite{zeta2} that the tail distribution function of the logistic distribution
\[
f(x)=\frac{1}{1+e^{x}}
\]
solves the corresponding cavity equation. For other values of parameters no such explicit solution is known (and, most probably, it never admits any 
reasonable analytic expression).

\paragraph{Remarks --}
These distributional equations differ from those that we have studied in the previous section, and in several aspects. First, $\xi$ is no longer a real-valued random variable, as it takes values in the space of sequences~$\{\xi_i\}$. Second, there is an infinite number of $X_i$'s that are used. Third, the relation function $R$ is monotone non-increasing instead of monotone non-decreasing. 

The first issue is not so important: we have never used that $\xi$ was taking real values  in the proof of Theorem~\ref{t:main}; the arguments from the proof of Theorem~\ref{t:applying} will require some slight modifications, but there is nothing substantial to be changed.
Also, as we explained in Section~\ref{s:cut-off}, dealing with an infinite number of variables requires some extra care, but it can be managed. 

Lastly, the third issue can be easily solved by passing to the square of the map~$\Phi$, doing two iterations at once. For example, in \eqref{eq:minimal}, this leads to the RTP
\begin{equation}\label{eq:two}
X\stackrel{d}{=} \min_{1\le i<\infty} (\xi_i- \min_{1\le j<\infty} (\xi_{ij}-X_{ij})) =\min_{1\le i<\infty} (\xi_i+ \max_{1\le j<\infty} (X_{ij}-\xi_{ij})).
\end{equation}
This RTP is translation-equivariant, and it looks natural to try to apply the methods of~\cite{KKT}.
However, one should note that its features look more like those of the RDE corresponding to a \emph{pivotal} graph. Indeed, if a measure $\mu$ has an atom at $+\infty$ with an arbitrarily small weight, its image by the operator associated with~\eqref{eq:minimal} is automatically $\ddelta_{-\infty}$, and its image by the one associated with~\eqref{eq:two} is~$\ddelta_{+\infty}$.
Moreover, this dramatic collapse occurs even for measures $\mu$ whose tails at~$+\infty$ do not have sufficiently good decay.

To overpass this problematic issue, the solution is to restrict to a space $\cQ$ of measures whose tails have a sufficiently good integrability condition, as done first in~\cite{BP} and then in~\cite{K,salez}. Indeed, note that from the expression~\eqref{eq:P} we have the asymptotic equivalences
\begin{equation}\label{eq:poisson1}
\mP_k(\lambda)=O(\lambda^{k-1} e^{-\lambda})\quad \text{as } \lambda\to +\infty
\end{equation}
and
\begin{equation}\label{eq:poisson2}
1-\mP_k(\lambda)=e^{-\lambda}\sum_{j=k}^{+\infty} \frac{\lambda^j}{j!} = O(\lambda^{k} )\quad  \text{as } \lambda\to 0.
\end{equation}
Thus one can obtain good tail decays on the distribution function $\mP_k(I[f](x))$, which corresponds to the image under the operator associated with the RDE. 
The tail decays also allow to define an adapted metric $d_{\cQ}$ on $\cQ$, leading to an appropriate variation of the classes $\mC_{\alpha,\delta}$, so that the methods of \cite{KKT} will work.
We shall come back to this in \S\,\ref{par:last}.

\paragraph{Solutions to the equations --}
Actually, a cut-off method has already been used in \cite{W-match,salez,larsson} and \cite{K} to solve the RDEs \eqref{eq:minimal} and \eqref{eq:TSPk} respectively. Combining these works one can state:

\begin{thm}[Shah--Salez, W\"astlund, Khandwawala, Salez]\label{t:k}
Let $q\ge 1$ and let $\{\xi_i\}$ be a Poisson point process with intensity $x^{q-1}$. Let $k\ge 1$ be an integer. 
The equation~\eqref{eq:TSPk}
\[
X\stackrel{d}{=} {\min_{1\le i<\infty}}^{[k]} (\xi_i-X_i),
\]
admits a unique solution $\bm$. The Dirac measures $\ddelta_{\pm\infty}$ and the translates~$\bm_c$'s, $c\neq 0$, of the stationary measure $\bm$, are periodic points of order $2$ for $\Phi$, and any measure $\mu\in\cP$ converges to one of these periodic orbits.
Moreover, the measures $\bm^{a}$ that are solutions to 
the cut-off equations
\[
X\stackrel{d}{=} \min\left ({\min_{1\le i<\infty}}^{[k]} (\xi_i-X_i),a\right ),
\]
converge to $\bm$ as $a\to+\infty$.
\end{thm}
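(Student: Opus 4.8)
The plan is to reduce the statement of Theorem~\ref{t:k} to the existing literature: the existence and uniqueness of the solution, the description of the periodic orbits, and the global convergence are precisely the content of \cite{W-match,salez,larsson,K}, so the only genuinely new point to argue is the convergence of the cut-off solutions $\bm^a$ to $\bm$ as $a\to+\infty$. I would therefore first recall, with precise pointers, how the cited works set up the space of tail distribution functions with the integrability condition discussed in the ``Remarks'' paragraph above, and how within that space the squared operator $\Phi^2$ (needed because $R$ is non-increasing, as in~\eqref{eq:two}) becomes monotone for the stochastic order and admits $\bm$ as its unique non-trivial fixed point, with all other measures in $\cQ$ converging to it. This gives the first three assertions essentially verbatim; the periodicity of order $2$ for the translates $\bm_c$ with $c\neq 0$ follows because the relation function is translation-equivariant only after the square (the first iteration introduces a sign change via the $\min$/$\max$ alternation), and the translates are genuinely moved, never fixed, since $S_a$ has no fixed point on the line.

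For the cut-off convergence I would argue as follows. First, since $\Phi_a$ is monotone for $\preceq$ and sends every measure below $\ddelta_a$, and since the cut-off equation is again of non-increasing type, it is $\Phi_a^2$ that is the relevant monotone operator; its maximal fixed point is obtained as the decreasing limit $\bm^a=\lim_k \Phi_a^{2k}[\ddelta_{+\infty}]$ exactly as in \cite[Lemma~4]{RDE}, and this is the solution $\bm^a$ in the statement. Monotonicity of $\Phi_a$ in $a$ shows that $\{\bm^a\}_{a}$ is itself non-decreasing in $a$, so $\bm^{\infty}:=\lim_{a\to+\infty}\bm^a$ exists; one then checks $\bm^\infty\preceq\bm$ (since $R_a\le R$, hence $\Phi_a^{2k}[\ddelta_{+\infty}]\preceq \Phi^{2k}[\ddelta_{+\infty}]\to\bm$ by the global attraction statement) and that $\bm^\infty$ is a fixed point of $\Phi^2$, by passing to the limit in $\bm^a=\Phi_a^2[\bm^a]$ using the continuity of the operators on the space $\cQ$ equipped with its adapted metric $d_\cQ$. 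Since $\bm$ is the unique non-trivial fixed point, it remains only to exclude $\bm^\infty=\ddelta_{-\infty}$; this non-triviality is where one invokes the quantitative tail estimates~\eqref{eq:poisson1}--\eqref{eq:poisson2} already recorded above, which show that a single cut-off at a large value $a$ perturbs the tail distribution function of $\bm$ only by a quantity controlled by $f_{\bm}(a)$, hence keeps $\Phi_a[\bm]$ well inside $\cQ$ and in particular far from the degenerate measures, so that the increasing family $\bm^a$ cannot have collapsed. Combined with $\bm^\infty\preceq\bm$ this forces $\bm^\infty=\bm$.

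The main obstacle I expect is bookkeeping rather than conceptual: one must make sure the weak-$*$ limit arguments are carried out in the \emph{right} space, namely in $\cQ$ with its stronger metric $d_\cQ$ adapted to the tail integrability condition, because on the bare space $\cP$ of Radon measures on $[-\infty,\infty]$ the operator $\Phi$ associated with~\eqref{eq:TSPk} is badly discontinuous at measures with any atom at $+\infty$ (as emphasised in the ``Remarks'' paragraph). So the delicate step is to verify that the family $\{\bm^a\}$ stays in a $d_\cQ$-compact subset and that the equality $\bm^a=\Phi_a^2[\bm^a]$ survives the limit $a\to+\infty$ in that topology; once this is in place, everything else is a direct citation of \cite{W-match,salez,larsson,K} together with the monotonicity and uniqueness already available there. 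I would also remark that, since we only use Theorem~\ref{t:k} as an input for verifying assumptions~\ref{a:incr}--\ref{a:superexp} in the proof of Theorem~\ref{t:TSP}, the precise rate of convergence $\bm^a\to\bm$ is not needed here: plain convergence suffices, and the superexponential rate required by~\ref{a:superexp} will instead be extracted, exactly as in Lemma~\ref{l:decrease_superexp}, from the superexponential tail decay of $\bm$ that these works also establish.
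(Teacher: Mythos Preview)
The paper does not prove Theorem~\ref{t:k} at all: it is stated as a compilation of results from \cite{W-match,salez,larsson,K}, introduced by the sentence ``Combining these works one can state''. In particular, the convergence $\bm^a\to\bm$ of the cut-off solutions is part of what is being cited, not something the paper establishes. So your plan to ``reduce to the literature'' is exactly what the paper does, but the paper stops there, whereas you go on to sketch an independent argument for the cut-off convergence that the paper neither needs nor provides.

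That extra argument, as written, has a genuine gap coming from the fact that here $\Phi$ is order-\emph{reversing}. Your proof that $\bm^{\infty}\preceq\bm$ relies on $\Phi_a^{2k}[\ddelta_{+\infty}]\preceq \Phi^{2k}[\ddelta_{+\infty}]\to\bm$; but $\{\ddelta_{+\infty},\ddelta_{-\infty}\}$ is precisely one of the period-$2$ orbits in the statement, so $\Phi^{2k}[\ddelta_{+\infty}]=\ddelta_{+\infty}$ for every $k$ and does not converge to $\bm$. The same order-reversal undermines your claim that $a\mapsto\bm^a$ is monotone: from $R_a\le R_{a'}$ you get $\Phi_a[\mu]\preceq\Phi_{a'}[\mu]$, but composing once more with the order-reversing $\Phi_a$ flips the inequality, and one cannot conclude $\Phi_a^2\preceq\Phi_{a'}^2$ from these two steps alone. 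The cited papers handle this by working directly with the tail-distribution operator $f\mapsto P_k\circ I[f]$ and exploiting its specific analytic structure, rather than by the abstract monotonicity bookkeeping you propose; if you want a self-contained argument, that is the route to follow.
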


\begin{rem}
For pseudo-dimension $0<q<1$, as a first partial result, Larsson \cite{larsson} proved the existence of unique, globally-attractive solutions to the cut-off equations. It would be interesting to see if our methods apply to these cases.
\end{rem}

In the remaining part, we inspect the proofs in \cite{BP,K,salez}  and verify the assumptions \ref{a:incr}--\ref{a:superexp}.

\paragraph{Verifying assumptions --}\label{par:last}

Let $q> 1$ and $k\in\N$ be fixed, let $\Phi$ be the operator associated to the RDE \eqref{eq:TSPk}. Given any measure $\mu\in\cP$, we denote by $f_\mu$ its tail distribution function.

As already noticed, if we want to stay within our setting of monotone RDEs, we have to iterate the recursion twice as in \eqref{eq:two}, and actually it is better to consider both even and odd iterations: the former are non-decreasing, the latter non-increasing.
The assumptions \ref{a:incr}--\ref{a:superexp} will be verified for~$\Phi^2$, namely the squared RDE \eqref{eq:two}.

\smallskip
 
We have already mentioned that for a given measure $\mu$ its tail distribution function $f_\mu$ does not verify the integrability condition 
\begin{equation}\label{eq:tail_cond}
\int_0^{+\infty}y^{q-1}f_\mu(y)dy<+\infty,
\end{equation}
then applying $\Phi$ collapses the measure: $\Phi[\mu]=\ddelta_{-\infty}$ and $\Phi^2[\mu]=\ddelta_{+\infty}$.

We define $\widehat\cQ$ to be the space of measures
\[
\widehat\cQ=\{\mu\in \cP_0\mid f_\mu\text{ verifies }\eqref{eq:tail_cond}\}
\]
and we equip it with the distance
\begin{equation}\label{eq:distance}
d_{\cQ}(\mu,\nu)=\sup_{x\in \R}|f_\mu(x)-f_\nu(x)|e^{C_{q}|x|},
\end{equation}
where $C_{q}$ is some (sufficiently large) constant, depending on  $q$, that will be fixed by Proposition~\ref{l:last}.

With respect to the distance $d_\cQ$, there are measures in $\widehat\cQ$ that are at infinite distance, whence we define
\[
\cQ:=\{
\mu\in \widehat\cQ\mid d_{\cQ}(\mu,\bm)<+\infty
\}.
\]

The study of convergence to the solution in \cite[Thm.~5.1]{BP} is done for the case $k=q=1$ for which the cut-off is not needed, but it can be adapted to the general case:

\begin{thm}[Shah--Salez]\label{t:BP}
Let $q\ge 1$ and let $\{\xi_i\}$ be a Poisson point process with intensity $x^{q-1}$. Let $k\ge 1$ be an integer. Let $\bm$ be the solution to the RDE \eqref{eq:TSPk} and 
$\mu\in\cQ$ any measure. Then there exists a constant $c=\tc(\mu)\in\R$ such that
\[
\Phi^{2n}[\mu]\to\bm_c\quad\text{and}\quad\Phi^{2n+1}[\mu]\to\bm_{-c}\quad\text{as }n\to\infty.
\]
Moreover, with respect to the distance $d_\cQ$, the function $\tc$ is continuous and the convergences are uniform.
\end{thm}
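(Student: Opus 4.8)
The plan is to transport the upper/lower class machinery of \cite[\S\,6]{KKT} and the convergence analysis of \cite[Thm.~5.1]{BP} from the non-pivotal graph setting (and from the case $k=q=1$) to the present cavity equation, working with the non-decreasing operator $\Phi^2$ of \eqref{eq:two} and with \emph{tail distribution functions} $f_\mu$. The basic dictionary is the displacement theorem recalled before \eqref{eq:P}, which turns one step of $\Phi$ into the substitution $f\mapsto \mP_k\circ I[f]$, combined with the Poisson asymptotics \eqref{eq:poisson1}--\eqref{eq:poisson2}. Everything below is to be carried out for both even and odd iterates, using that $R$ is monotone \emph{non-increasing}, so that $\Phi$ swaps ``upper'' and ``lower'' control and $\Phi^2$ preserves each.

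First I would establish the a priori decay that makes the weighted metric \eqref{eq:distance} meaningful: the tail $f_{\bm}$ of the stationary solution decays superexponentially at $+\infty$ (the analogue of \cite[Lemma~19]{KKT}). This follows by bootstrapping the fixed-point identity $f_{\bm}=\mP_k\circ I[f_{\bm}]$. Since $f_{\bm}\in\cQ$ obeys the integrability condition \eqref{eq:tail_cond}, $I[f_{\bm}](x)=\int\rho(x+y)f_{\bm}(y)\,dy$ grows (polynomially, to start with, from the $x^{q-1}$ kernel), and then \eqref{eq:poisson1} gives $f_{\bm}=\mP_k(I[f_{\bm}])=O\big((I[f_{\bm}])^{k-1}e^{-I[f_{\bm}]}\big)$; feeding this improved tail back into $I$ improves the growth of $I[f_{\bm}]$, and iterating yields the superexponential bound. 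It is precisely this decay of $f_{\bm}$ that turns $d_\cQ(\mu,\bm)<\infty$ into a non-degenerate condition and that lets the convolution $I$ act continuously on $e^{-C_q|\cdot|}$-weighted perturbations once $C_q$ is chosen large enough; this is where Proposition~\ref{l:last} fixes $C_q$.

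The core of the proof is the adapted improvement step, the analogue of Proposition~\ref{l:shift}. Fix a small $\alpha$, let $\kal,\kone$ be the $\alpha$- and $(1-\alpha)$-quantiles of $\bm$, and say $\mu$ is of \emph{upper class $(\alpha,\delta)$} if it obeys the two-sided bound of Definition~\ref{def:class} with $F$ replaced by $f$ and the flat tolerance $\delta$ replaced by the weighted tolerance $\delta\,e^{-C_q|x|}$ off the core $[\kal,\kone]$; define lower classes symmetrically. These classes cover $\cQ$ (each $\mu$ with $d_\cQ(\mu,\bm)<\infty$ lies in some class once $\delta$ is large enough). I would then prove: there is a constant $L$ such that $T_{L\delta}\Phi^2$ maps the upper class $(\alpha,\delta)$ into the upper class $(\alpha,\delta/2)$, and likewise for lower classes. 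To do so one controls $I[f_\mu]-I[f_{\bm}]$ in the weighted norm — the kernel grows only polynomially while the perturbation carries the factor $e^{-C_q|\cdot|}$, so the convolution converges and is dominated by a mildly-weighted quantity (again forcing $C_q$ large) — and then applies $\mP_k$, which is $1$-Lipschitz but, by \eqref{eq:poisson1}--\eqref{eq:poisson2}, vanishes to order $k\ge 1$ at $\lambda=0$ and superexponentially at $\lambda=+\infty$; this flattening genuinely \emph{contracts} the tail perturbation, so that composing two steps of $\Phi$ yields a net gain rather than the naive expansion of $I$. Finally, as in \cite{KKT}, a translation by $O(\delta)$ removes the additive error on the core, where the density of $\bm$ is bounded below. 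This is the analogue, in the mean-field setting, of the ``it takes two parallel long edges or two consecutive short edges'' mechanism of \cite[\S\,6.1]{KKT}.

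Iterating this step exactly as in Proposition~\ref{c:upper} shows that any $\mu$ of class $(\alpha,\delta)$ has $f_{\Phi^{2n}[\mu]}$ asymptotically squeezed between $f_{\bm_{2L\delta}}$ and $f_{\bm_{-2L\delta}}$. For a general $\mu\in\cQ$: Theorem~\ref{t:k} already gives that $\Phi^n[\mu]$ approaches one of the periodic orbits $\{\ddelta_{\pm\infty}\}$ or $\{\bm_c,\bm_{-c}\}$, and $\Phi^2$-invariance of $\cQ$ (a byproduct of the class argument, together with the preservation of the tail condition \eqref{eq:tail_cond}) rules out the collapse to $\ddelta_{\pm\infty}$; hence $\Phi^{2n}[\mu]\to\bm_c$ for some $c=:\tc(\mu)$, and applying $\Phi$ once more gives $\Phi^{2n+1}[\mu]\to\bm_{-c}$. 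For the continuity of $\tc$ and the uniformity on $d_\cQ$-compact subsets of $\cQ$ I would mimic the proof of Lemma~\ref{l:uniform}: given $\mu$ with (WLOG) $\tc(\mu)=0$, pick $n_0$ with $\Phi^{2n_0}[\mu]$ close to $\bm$; since $\Phi$ is $d_\cQ$-continuous (the metric \eqref{eq:distance} is built for that), a whole $d_\cQ$-neighbourhood $U$ of $\mu$ lands after $\Phi^{2n_0}$ in a common upper and lower class $(\alpha,\delta)$, so the iteration step gives $|\tc(\mu')|\le 2L\delta$ and a uniform convergence rate on $U$, and a finite subcover handles a compact $\mK$. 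The step I expect to be the real obstacle is the improvement lemma: one must calibrate $C_q$ so that $I$ sends $e^{-C_q|\cdot|}$-bounded perturbations to controlled ones, while the quadratic/superexponential flattening of $\mP_k$ at $0$ and $\infty$ still beats that expansion after two steps — this is the mean-field counterpart of the non-pivotality estimates of \cite{KKT} and of the computations of \cite{BP,salez}, here extended from $k=q=1$ to $k\ge 1$, $q>1$.
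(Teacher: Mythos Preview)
Your proposal is sound in spirit but takes a different route from the paper. The paper does \emph{not} prove Theorem~\ref{t:BP}: it is stated as a result of Shah--Salez, with the remark that \cite[Thm.~5.1]{BP} treats the case $k=q=1$ and ``can be adapted to the general case''. The only argument the paper supplies is the sentence after the statement: \cite{BP} proves convergence for the \emph{horizontal} displacement, and combining this with the superexponential tail decay of Lemma~\ref{l:td} upgrades it to $d_\cQ$-convergence.

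What you propose instead is a self-contained proof via an adapted-class improvement step, i.e.\ exactly the machinery the paper builds in Definition~\ref{d:adapted} and Proposition~\ref{l:last} (your quantile-based classes are a cosmetic variant of the $M$-based ones there). The paper develops that machinery \emph{not} to prove Theorem~\ref{t:BP} but to verify assumptions~\ref{a:infinity} and~\ref{a:superexp}; indeed, in the concluding remarks the authors write that ``it seems very plausible that with the employment of the adapted classes (Definition~\ref{d:adapted}) the arguments of \cite{BP} can be carried out to give an alternative proof of Theorem~\ref{t:BP}.'' So your plan is precisely that alternative route, and it is consistent with the paper's own tools. Two caveats worth flagging: (i) Proposition~\ref{l:last} only applies for $\delta<\delta_0$, so ``these classes cover $\cQ$'' is not enough by itself --- you must first use Lemma~\ref{l:td} (uniform tail decay after four iterations) together with the weak-$*$ convergence from Theorem~\ref{t:k} to bring $\Phi^{2n_0}[\mu]$ into a small-$\delta$ class before the improvement step bites; (ii) the paper's improvement step is stated for a \emph{single} application of $\Phi$, swapping upper and lower classes (Proposition~\ref{l:last}), rather than for $\Phi^2$ preserving each --- your formulation is the composition of the two cases there.
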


Therefore, from now on, we shall only consider measures satisfying \eqref{eq:tail_cond}
and actually after four iterations we can restrict even more the space on which we study the behaviour of $\Phi$. Indeed, using the asymptotic equivalences  \eqref{eq:poisson1} and \eqref{eq:poisson2}, proceeding as in \cite[Lemma~3]{K} and \cite[Prop.~3.1]{salez}, we have:
\begin{lem}\label{l:td}
Let $n\ge 4$. For any measure $\mu\in\cQ$, the image $\Phi^n[\mu]$ has the tail decays
\begin{align}
f_{\Phi^n[\mu]}(x)&=O\left(x^{q(k-1)}e^{- x^q} \right)\quad\text{as }x\to+\infty.  \label{eq:tail_cond1} \\
1-  f_{\Phi^n[\mu]}(x)&=O\left(|x|^{qk(k-1)}e^{ -k |x|^q} \right)\quad\text{as }x\to-\infty. \label{eq:tail_cond2}
\end{align}
In particular, when $q>1$, the stationary measure has a superexponential tail decay.
\end{lem}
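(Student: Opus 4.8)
The plan is to work entirely with the cavity recursion for tail distribution functions. As recalled in the discussion of cavity equations (just before \eqref{eq:poisson1}), one application of $\Phi$ sends the tail distribution function $f_\mu$ of a measure $\mu\in\cP_0$ to $f_{\Phi[\mu]}=\mP_k\circ I[f_\mu]$, where $I[f](x)=\int_{-\infty}^{+\infty}\rho(x+y)f(y)\,dy$ and $\rho$ is supported on the positive half-line, so that $I[f](x)=\int_{-x}^{+\infty}(x+y)^{q-1}f(y)\,dy$. It therefore suffices to follow, through the iteration $g\mapsto \mP_k\circ I[g]$, the two tails of $g$ separately. I would use two elementary facts about $I$: (i) at $+\infty$ the growth of $I[g]$ is insensitive to the right tail of $g$ — for any tail distribution function with $g(-\infty)=1$ one has $c'x^q\le I[g](x)\le c''x^q$ for all large $x$ (indeed $I[g](x)=\tfrac{x^q}{q}+O(x^{q-1})$ when $q>1$, the correction being controlled via $\int_0^{+\infty}g<+\infty$, which holds since $y^{q-1}\ge 1$ on $[1,\infty)$); and (ii) at $-\infty$, $I[g](x)=\int_{|x|}^{+\infty}(y-|x|)^{q-1}g(y)\,dy$ is governed entirely by the right tail of $g$, and in particular tends to $0$. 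These combine with the Poisson asymptotics \eqref{eq:poisson1}, $\mP_k(\lambda)=O(\lambda^{k-1}e^{-\lambda})$ as $\lambda\to+\infty$, and \eqref{eq:poisson2}, $1-\mP_k(\lambda)=O(\lambda^{k})$ as $\lambda\to0$.

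\emph{Right tail.} For $\mu\in\cQ\subset\cP_0$ one has $f_\mu(-\infty)=1$ and $f_\mu$ satisfies \eqref{eq:tail_cond}, so $I[f_\mu]$ is finite and, by (i), grows like $cx^q$ at $+\infty$; applying \eqref{eq:poisson1} gives $f_{\Phi[\mu]}(x)=\mP_k(I[f_\mu](x))=O\big(x^{q(k-1)}e^{-cx^q}\big)$ as $x\to+\infty$, which is the bound \eqref{eq:tail_cond1}. Since $f_{\Phi[\mu]}(-\infty)=1$ as well, fact (i) applies again, and the same shape is reproduced by every further application of $\Phi$; thus \eqref{eq:tail_cond1} holds for all $\Phi^n[\mu]$ with $n\ge1$. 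The routine computation matching the precise constants to \eqref{eq:tail_cond1} is carried out in \cite[Lemma~3]{K} and \cite[Prop.~3.1]{salez}, and I would simply quote it.

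\emph{Left tail.} Once $f_{\Phi[\mu]}$ has a right tail of the form \eqref{eq:tail_cond1}, I would substitute it into (ii): $I[f_{\Phi[\mu]}](x)\le\int_{|x|}^{+\infty}y^{q-1}\cdot Cy^{q(k-1)}e^{-cy^q}\,dy$, and the incomplete-Gamma asymptotics give $I[f_{\Phi[\mu]}](x)=O\big(|x|^{q(k-1)}e^{-c|x|^q}\big)\to0$ as $x\to-\infty$. Feeding this into \eqref{eq:poisson2} yields $1-f_{\Phi^2[\mu]}(x)=1-\mP_k(I[f_{\Phi[\mu]}](x))=O\big((I[f_{\Phi[\mu]}](x))^{k}\big)=O\big(|x|^{qk(k-1)}e^{-ck|x|^q}\big)$ as $x\to-\infty$, which is \eqref{eq:tail_cond2}. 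This shape is again stable under further iterations (the left tail of $\Phi[\nu]$ only uses the right tail of $\nu$, already controlled). Hence both \eqref{eq:tail_cond1} and \eqref{eq:tail_cond2} hold for $\Phi^n[\mu]$ once $n\ge2$; stating the lemma for $n\ge4$ merely leaves room to run the bookkeeping in terms of the squared (monotone) operator $\Phi^2$ of \eqref{eq:two}. Finally, since $\bm=\Phi^n[\bm]$ for every $n$, the bound \eqref{eq:tail_cond1} applies to $\bm$ itself, and for $q>1$ the function $x\mapsto x^{q(k-1)}e^{-cx^q}$ is $o(e^{-\beta x})$ for every $\beta>0$; this is the asserted superexponential decay.

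\emph{Main obstacle.} The delicate point is fact (i): one needs the $x^q$-behaviour of $I[\,\cdot\,](x)$ at $+\infty$ as a genuine \emph{two-sided} estimate, since the lower bound enters the right-tail estimate through the monotonicity of $\mP_k$; and one must track the two tails in the correct order, because the left-tail estimate of an image is only as good as the right-tail estimate of its preimage, so the first iteration must be used to install the $e^{-cx^q}$ right tail before the left tail can be improved. All of this is parallel to \cite[Lemma~3]{K} and \cite[Prop.~3.1]{salez}; the only genuinely new feature is that we start merely from $\mu\in\cQ$, i.e.\ assuming only the integrability \eqref{eq:tail_cond}, which is why the universal bounds are claimed after a fixed number of iterations rather than immediately.
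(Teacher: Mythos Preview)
Your proposal is correct and follows precisely the route the paper indicates: the paper itself gives no proof at all for this lemma, merely writing ``proceeding as in \cite[Lemma~3]{K} and \cite[Prop.~3.1]{salez}, we have'' and stating the result. Your sketch expands exactly that argument---controlling $I[f](x)$ at $+\infty$ via the left tail of $f$ (fact~(i)) and at $-\infty$ via the right tail (fact~(ii)), then plugging into the Poisson asymptotics \eqref{eq:poisson1}--\eqref{eq:poisson2}---so the two are in complete agreement.

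One small remark on bookkeeping: your fact~(i) as stated yields $I[f_\mu](x)\ge c'x^q$ with a constant $c'$ that a priori depends on the left tail of $\mu$ (only known to satisfy $1-f_\mu(y)\to 0$), so after one iteration you obtain $f_{\Phi[\mu]}(x)=O(x^{q(k-1)}e^{-c'x^q})$ with a $\mu$-dependent $c'$, not yet the constant displayed in~\eqref{eq:tail_cond1}. It takes a further couple of iterations for the left-tail bound to feed back and sharpen $c'$ to its universal value; this is presumably why the lemma is stated for $n\ge 4$ rather than $n\ge 2$, and why deferring the exact constants to \cite{K,salez} is the right move. For the only application in the paper (the proof of Lemma~\ref{l:last1}, where one merely needs $f_{\bm}(x)=o(e^{-C_q x})$ for $q>1$), any positive constant in the exponent of $e^{-cx^q}$ would suffice, so your version already delivers what is actually used.
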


\smallskip

The conditions \ref{a:incr} and \ref{a:translation} are immediately verified for the square $\Phi^2$.
The space $(\cQ,d_\cQ)$ for condition~\ref{a:space} has been chosen and continuity of operators can be checked using the explicit expression at the level of tail distribution functions, namely
\[
f_{\Phi[\mu]}=P_k\circ I[f_\mu]
\]
(this avoids dealing with the continuity of the function $R$ defined on the infinite product $[-\infty,\infty]^\N$). 
The assumption~\ref{a:stationary} follows from Theorem~\ref{t:k} and Lemma~\ref{l:td}: the solutions $\bm_c$ are supported on~$\R$ and clearly have integrable tails. 
The assumption \ref{a:everywhere} is verified after Theorem~\ref{t:BP}.

\begin{rem}
In fact, in \cite{BP} Shah and Salez prove the convergence for the ``horizontal'' displacement which, together with the superexponetial tail decay, guarantees the convergence with respect to our metric $d_\cQ$.
\end{rem}

We need now to establish the assumptions~\ref{a:infinity} and~\ref{a:superexp}. 
Proceeding in the same way as in \S\S\,\ref{s:upper-lower}--\ref{s:uniform}, we would like to use a series of upper and lower bounds for the distribution functions 
that behave nicely under the iterations (compare with Definition~\ref{def:class}, Propositions~\ref{l:shift} and~\ref{c:upper}), and to combine them with the decay bounds on the stationary measure $\bm$ to establish the superexponential decay of the function~$\Delta$ (cf.~Lemma~\ref{l:decrease_superexp}).

However, the definition for the lower class $(\alpha,\delta)$ requires some adaptation for this case: trying to copy it directly, we stumble upon measures having atoms at $+\infty$, and as we have already discussed, such measures are sent to $\ddelta_{\pm\infty}$ in one or two iterations of the map $\Phi$. Thus, instead of changing the partition function $F_{\bm}$ by a constant~$\delta$, the allowed difference between the partition functions will depend on the point. 

That is,
given $M>0$ (that plays the role of $\alpha$ and that will be fixed by Proposition~\ref{l:last}) we introduce the perturbation function
\[
f_{pert}(x)=\begin{cases}
e^{-C_{q}|x|}, & |x|\ge M \\
0, & |x|<M,
\end{cases}
\]
where $C_{q}$ is the constant defining the distance $d_{\cQ}$ in \eqref{eq:distance} (that will be also fixed by Proposition~\ref{l:last}). We then define:

\begin{dfn}\label{d:adapted}
A measure $\mu\in\cQ$ is of \emph{adapted upper class $(M,\delta)$} if the tail distribution function~$f_{\mu}$ satisfies the inequality
\begin{equation}\label{eq:upper_class}
f_{\mu}(x)\le f_{\bm}(x)+\delta f_{pert}(x)\quad\text{for every }x\in\R,
\end{equation}
where~$f_{\bm}$ is the tail distribution function of the stationary measure~$\bm$ (see Fig.~\ref{fig:adapted}).

Analogously, a measure $\mu\in\cQ$ is of \emph{adapted lower class} $(M,\delta)$ if
\begin{equation}\label{eq:lower_class}
f_{\mu}(x)\ge f_{\bm}(x)-\delta f_{pert}(x)\quad\text{for every }x\in\R.
\end{equation}
\end{dfn}
\begin{figure}
\[
\includegraphics[width=.8\textwidth]{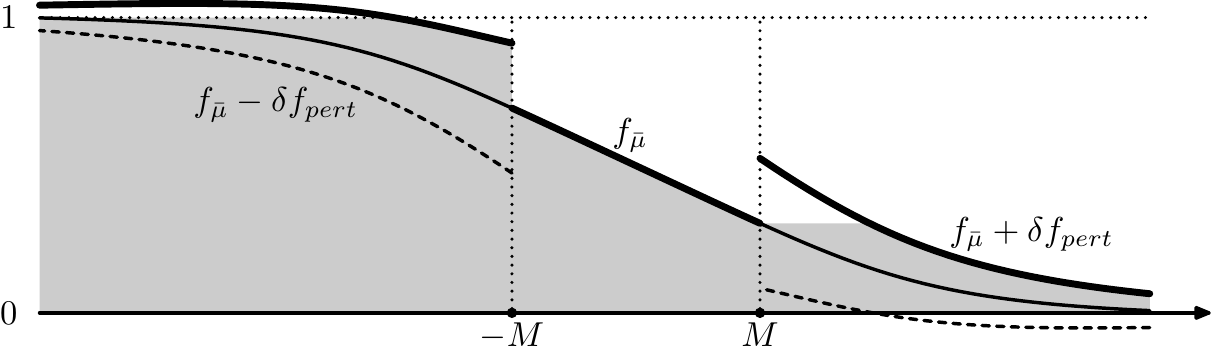}
\]
\caption{The adapted classes $(M,\delta)$. The grey zone is where any tail distribution function $f$ 
belonging to the adapted upper class $(M,\delta)$ is forced to stay by its monotonicity.}\label{fig:adapted}
\end{figure}
With this definition we can follow the strategy of \cite{KKT}.
We have a result analogue to Proposition~\ref{l:shift} (from which we
choose the constant $C_{q}$ and $M$):
\begin{prop}\label{l:last}
There exist positive constants $C_{q}$, $M$, $L>0$ and $\delta_0\in (0,1)$ such that the following holds: \begin{enumerate}
\item for any $\delta\in(0,\delta_0)$, if the measure $\mu\in\cQ$ is of adapted upper class $(M,\delta)$, then $T_{L\delta}\Phi[\mu]$ is of adapted lower class $(M,\delta/2)$;
\item for any $\delta\in(0,\delta_0)$, if the measure $\mu\in\cQ$ is of adapted lower class $(M,\delta)$, then $T_{-L\delta}\Phi[\mu]$ is of adapted upper class $(M,\delta/2)$.
\end{enumerate}
\end{prop}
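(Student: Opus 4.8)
The plan is to run the argument of Proposition~\ref{l:shift} (\cite[Prop.~5]{KKT}), replacing the coupling step used there by the explicit action of $\Phi$ on tail distribution functions, $f_{\Phi[\mu]}=P_k\circ I[f_\mu]$. Recall that $I$ is linear and monotone (its kernel $\rho$ is nonnegative), that $P_k$ is strictly decreasing on $[0,+\infty)$ with $P_k'(\lambda)=-e^{-\lambda}\lambda^{k-1}/(k-1)!$ and $\sup_{\lambda\ge0}|P_k'(\lambda)|\le1$, and that $\bm$ is the \emph{fixed} point of $\Phi$ (Theorem~\ref{t:k}), so $f_{\bm}=P_k\circ I[f_{\bm}]$, while $\Phi$ \emph{reverses} the pointwise order on tail distribution functions. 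For the first assertion, take $\mu$ of adapted upper class $(M,\delta)$, i.e.\ $f_\mu\le f_{\bm}+\delta f_{pert}$. Monotonicity of $I$ and of $-P_k$, together with a one-term Taylor bound on $P_k$, give
\[
f_{\Phi[\mu]}(x)\ \ge\ P_k\bigl(I[f_{\bm}](x)+\delta\,I[f_{pert}](x)\bigr)\ \ge\ f_{\bm}(x)-\delta\,G(x),
\]
where the \emph{defect kernel} $G$ is
\[
G(x):=I[f_{pert}](x)\cdot\sup\bigl\{\,|P_k'(\lambda)|\,:\,\lambda\in[\,I[f_{\bm}](x),\ I[f_{\bm}](x)+\delta I[f_{pert}](x)\,]\,\bigr\}.
\]
Everything then reduces to showing that, after a translation by $L\delta$, the defect $\delta G$ is absorbed into $\tfrac12\delta f_{pert}$.

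The next step is to estimate $G$ in three regimes. For $I[f_{pert}]$ one has the clean identity $I[f_{pert}](x)=\Gamma(q)C_q^{-q}e^{C_q x}$ for all $x\le -M$; on $[-M,M]$ it is bounded (by $O(M^{q-1}e^{-C_q M})$), and it grows only polynomially, $I[f_{pert}](x)=O(x^{q-1})$, as $x\to+\infty$; in all cases it is continuous. For the factor $\sup|P_k'|$: it is $\le1$ always; as $x\to+\infty$ we have $I[f_{\bm}](x)\to\infty$ (of order $x^q$), so the relevant interval lies in the decreasing branch of $|P_k'|$ and $\sup|P_k'|\le|P_k'(I[f_{\bm}](x))|$, which decays superexponentially by Lemma~\ref{l:td} — \emph{this is precisely where the hypothesis $q>1$ enters}; as $x\to-\infty$, $I[f_{\bm}](x)$ is superexponentially small, so $\sup|P_k'|\le1$ (for $k=1$), and $\sup|P_k'|=o(1)$ (for $k\ge2$). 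Combining: $G$ is small and bounded on $[-M,M]$; $G(x)=o(e^{-C_q x})$ as $x\to+\infty$, with a rate independent of $\delta\le\delta_0$; and $G(x)\le\Gamma(q)C_q^{-q}e^{C_q x}$ for all $x\le -M$ (again uniformly in $\delta\le\delta_0$).

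To conclude, bring in the translation: $f_{T_{L\delta}\Phi[\mu]}(x)=f_{\Phi[\mu]}(x-L\delta)\ge f_{\bm}(x-L\delta)-\delta\,G(x-L\delta)$. On the core $|x|\le M$, where $f_{pert}=0$, one uses that $\bm$ has a continuous positive density (as follows from the fixed-point equation), hence on a fixed compact neighbourhood of $[-M,M]$ its density is bounded below by some $\rho_{\min}>0$; thus $f_{\bm}(x-L\delta)-f_{\bm}(x)\ge\rho_{\min}L\delta$, and the choice $L\ge(\sup_{[-M,M]}G)/\rho_{\min}$ forces $f_{T_{L\delta}\Phi[\mu]}(x)\ge f_{\bm}(x)$. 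On the tails, $f_{\bm}(x-L\delta)\ge f_{\bm}(x)$ and the estimates above give $\delta\,G(x-L\delta)\le\tfrac12\delta\,f_{pert}(x)$ (the bounded shift $L\delta\le L\delta_0$ being absorbed into the constants), so $f_{T_{L\delta}\Phi[\mu]}\ge f_{\bm}-\tfrac12\delta f_{pert}$, i.e.\ adapted lower class $(M,\delta/2)$; that $T_{L\delta}\Phi[\mu]\in\cQ$ follows from $\mu\in\cQ$ together with the same type of two-sided bound on $|f_{\Phi[\mu]}-f_{\bm}|$, so the conclusion is meaningful. The second assertion is the mirror image: $f_\mu\ge f_{\bm}-\delta f_{pert}$ yields $f_{\Phi[\mu]}\le f_{\bm}+\delta G$ for the analogous kernel (with the sup-interval on the other side of $I[f_{\bm}](x)$, handled identically, and with a crude bound $f_{\Phi[\mu]}\le1$ in the far left tail where $I[f_{\bm}]-\delta I[f_{pert}]$ would become negative), and one translates by $-L\delta$, which now lowers $f_{\bm}$ on the core. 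The constants must be fixed in the order: first $C_q$ large (so that $\Gamma(q)C_q^{-q}<\tfrac12$, the binding case being $k=1$); then $M$ large (so that $G(x)e^{C_q|x|}<\tfrac12$ for all $|x|\ge M$ and the identity for $I[f_{pert}]$ is in force); then $\rho_{\min}$ and $L$ on the resulting core; then $\delta_0\in(0,1)$ small enough that $L\delta_0$ is small, the Taylor intervals lie in monotone branches of $|P_k'|$, and the shifted core stays where the density of $\bm$ exceeds $\rho_{\min}$.

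The main obstacle I expect is the constant bookkeeping near $|x|=M$: there $f_{pert}$ already vanishes, so one cannot afford any slack beyond what the translation produces, and that slack, $\rho_{\min}L\delta$, involves the density of $\bm$ near $\pm M$, which is itself tiny (superexponentially small in $M$), while $\delta\,G$ there is only of order $\delta\,e^{-C_q M}$; this pushes $L$ to be very large relative to $M$ and $C_q$, which in turn forces $\delta_0$ to be very small, and one must check that this whole cascade of choices closes up — in particular that $M$ can simultaneously be large enough for the far-tail estimates and compatible with the chosen $C_q$. A secondary technical point is the uniformity (in $\delta\le\delta_0$) of the Taylor remainder for $P_k$, which needs $\delta_0$ small so that the perturbation $\delta\,I[f_{pert}](x)$ never moves the evaluation interval across the critical value $\lambda=k-1$ where $|P_k'|$ changes monotonicity.
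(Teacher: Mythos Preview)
Your strategy is correct and matches the paper's: split into core $|x|<M$ and tails $|x|\ge M$, use decay estimates on the tails and a translation against the positive density of $\bm$ on the core. The paper organises this as Lemmas~\ref{l:last1} and~\ref{l:last2}, exactly as you outline. The two differences are precisely the sources of the two obstacles you flag at the end, and the paper's choices dissolve both.

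First, instead of the mean-value bound $P_k(\lambda+h)\ge P_k(\lambda)-h\sup|P_k'|$, the paper uses the multiplicative inequality (Lemma~\ref{l:ineq_P})
\[
P_k(\lambda+h)\ge e^{-h}P_k(\lambda),\qquad P_k\bigl((\lambda-h)^+\bigr)\le e^{h}P_k(\lambda),
\]
which holds for \emph{all} $\lambda,h\ge0$ with no restriction on the location of the interval relative to the critical point $k-1$. This gives the defect term $\lambda_{pert}(x)f_{\bm}(x)$ (respectively $(e^{\delta\lambda_{pert}(x)}-1)f_{\bm}(x)$) directly, and no $\delta_0$-smallness is needed to control the Taylor interval: your ``secondary technical point'' simply does not arise. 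Note that since $|P_k'|\le P_k$, your $G(x)$ is in fact bounded above by $\lambda_{pert}(x)f_{\bm}(x)$, so the two defects are comparable; the multiplicative form is just cleaner to manipulate, especially in the second assertion where $(\cdot)^+$ appears.

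Second, on the tails you shift the argument and then have to compare $G(x-L\delta)$ with $f_{pert}(x)$, which is what generates your ``main obstacle'' near $|x|=M$. The paper avoids this: once Lemma~\ref{l:last1} gives $f_{\Phi[\mu]}(x)\ge f_{\bm}(x)-\tfrac{\delta}{2}f_{pert}(x)$ on $|x|\ge M$, one simply observes that tail functions are non-increasing, so $f_{T_{L\delta}\Phi[\mu]}(x)=f_{\Phi[\mu]}(x-L\delta)\ge f_{\Phi[\mu]}(x)$, and the tail inequality transfers for free (Lemma~\ref{l:last2}). No comparison of shifted versus unshifted quantities is needed, and the cascade $C_q\to M\to L\to\delta_0$ closes with only the constraint $\Gamma(q)C_q^{-q}\le\tfrac14$ and $M$ large enough for the far-tail $o$-bounds.
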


Once this proposition is proved, we proceed as in Lemma~\ref{l:uniform} to establish~\ref{a:convergence} (where $\cQ_0$ can be 
taken, for instance,  to be the closure of the $\delta_0/2$-neighbourhood of $\bm$), and as in Lemma~\ref{l:decrease_superexp} in order to ensure \ref{a:superexp}.
The rest of this section is devoted to the proof of this proposition.

As in \cite{KKT}, we first find the good bounds for $f_{\Phi[\mu]}$ outside a compact set (Lemma~\ref{l:last1}), with linear (in~$\delta$) bounds on this compact set. Then we perform linear (in $\delta$) translations to guarantee that the functions $f_{T_{\pm L\delta}\phi[\mu]}$ verify the needed inequalities everywhere (Lemma~\ref{l:last2}). Notice that the inequalities in Lemma~\ref{l:last1} are in the opposite directions: an upper bound for $f_{\mu}$ becomes a lower bound for its $\Phi$-image, and \emph{vice versa}.

\begin{lem}\label{l:last1}
There exist positive constants $C_{q}$, $M$ and $K>0$ such that for every $\delta\in(0,1)$ the following holds:
\begin{enumerate}
\item \label{l:last1a} if the measure $\mu\in\cQ$ is of adapted upper class $(M,\delta)$, then $f_{\Phi[\mu]}$ satisfies
\[
f_{\Phi[\mu]}(x)\ge\begin{cases}
f_{\bm}(x)-K\,\delta&\text{for every }|x|< M,\\
f_{\bm}(x)-\frac \delta 2 f_{pert}(x)&\text{for every }|x|\ge M;
\end{cases}
\]
\item \label{l:last1b} if the measure $\mu\in\cQ$ is of adapted lower class $(M,\delta)$, then $f_{\Phi[\mu]}$ satisfies
\[
f_{\Phi[\mu]}(x)\le \begin{cases}
f_{\bm}(x)+K\,\delta&\text{for every }|x|< M,\\
f_{\bm}(x)+\frac \delta 2 f_{pert}(x)&\text{for every }|x|\ge M.
\end{cases}
\]
\end{enumerate}
\end{lem}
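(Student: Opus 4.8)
The plan is to work entirely at the level of tail distribution functions, using the explicit formula $f_{\Phi[\mu]}=P_k\circ I[f_\mu]$, and to split the real line into the compact ``core'' $\{|x|<M\}$ and the two tails $\{|x|\ge M\}$, exactly as in \cite[\S\,6.1]{KKT}. The two items of the lemma are symmetric (an upper bound on $f_\mu$ propagates to a lower bound on $f_{\Phi[\mu]}$ because $R$, hence $P_k\circ I$, is \emph{monotone non-increasing} in the input tail function); so I will prove item~\ref{l:last1a} and indicate that item~\ref{l:last1b} follows by reversing all inequalities. Write $f=f_\mu$, $g=f_{\Phi[\mu]}=P_k(I[f])$ and $g_\infty=f_{\bm}=P_k(I[f_{\bm}])$. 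The hypothesis is $f\le f_{\bm}+\delta f_{pert}$ pointwise.

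First I would treat the core $|x|<M$. Since $P_k$ is Lipschitz on $[0,\infty)$ (indeed $|P_k'(\lambda)|=e^{-\lambda}\lambda^{k-1}/(k-1)!$ is bounded) and $I$ is a positive linear operator, one gets
\[
|g(x)-g_\infty(x)|\le \mathrm{Lip}(P_k)\cdot\big|I[f](x)-I[f_{\bm}](x)\big|\le \mathrm{Lip}(P_k)\cdot\delta\, I[f_{pert}](x).
\]
Now $I[f_{pert}](x)=\int_{\R}\rho(x+y)f_{pert}(y)\,dy$ with $\rho(u)=u^{q-1}\mathbf 1_{u>0}$; because $f_{pert}(y)=e^{-C_q|y|}\mathbf 1_{|y|\ge M}$ decays exponentially, this integral is finite and, being continuous, is bounded by some constant on the compact set $|x|\le M$. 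This yields $g(x)\ge g_\infty(x)-K\delta$ on $|x|<M$ for a suitable $K=K(C_q,M,q,k)$. Note $K$ does not depend on $\delta$, which is the point.

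The substantial part is the tail estimate $g(x)\ge f_{\bm}(x)-\tfrac\delta2 f_{pert}(x)$ for $|x|\ge M$, and here the choice of $C_q$ and $M$ is forced. Consider first $x\to+\infty$. By \eqref{eq:poisson1}, $g(x)=P_k(I[f](x))=O\big(I[f](x)^{k-1}e^{-I[f](x)}\big)$, and $I[f](x)=\int\rho(x+y)f(y)dy$ has a lower bound controlled by the integrability condition \eqref{eq:tail_cond} on $f_{\bm}$ together with the fact that $f\ge f_{\bm}-\text{(small)}$ is already known after sufficiently many iterations — but since the lemma is stated for a single application of $\Phi$ starting from an adapted class, I must instead bound $I[f]$ from below using only $f\ge 0$ and, say, $f\ge f_{\bm}/2$ on a fixed compact set (which holds once $\delta_0$ is small enough and is preserved). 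The key mechanism, exactly as in \cite{KKT}, is that $I[f](x)\to\infty$ as $x\to+\infty$ at a rate that makes $g(x)=o(e^{-C_q x})$ \emph{with room to spare}: choosing $C_q$ smaller than the intrinsic decay rate of $g_\infty=f_{\bm}$ (which by Lemma~\ref{l:td} is genuinely superexponential, like $e^{-x^q}$ with $q>1$) guarantees that $g(x)$ itself is $\le \tfrac\delta 2 e^{-C_q x}$ once $x\ge M$ with $M$ large, and $f_{\bm}(x)\ge 0$ gives the claimed inequality trivially. For $x\to-\infty$, use \eqref{eq:poisson2}: $1-g(x)=1-P_k(I[f](x))=O(I[f](x)^{k})$ as $I[f](x)\to0$, and $I[f](x)=\int_{y>-x}(x+y)^{q-1}f(y)\,dy$ which is small because $f$ has the good decay at $+\infty$ inherited (after the preliminary iterations, via Lemma~\ref{l:td}) — one shows $1-g(x)\le \tfrac\delta2 e^{-C_q|x|}$ for $|x|\ge M$, hence $g(x)\ge 1-\tfrac\delta2 f_{pert}(x)\ge f_{\bm}(x)-\tfrac\delta 2 f_{pert}(x)$. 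The subtlety I expect to be the real obstacle is making these tail estimates \emph{uniform in the adapted class}: the perturbation $\delta f_{pert}(y)$ enters $I[f](x)$ as $\delta\int\rho(x+y)e^{-C_q|y|}dy$, and one must check that for $|x|\ge M$ this extra term does not spoil the needed lower bound on $I[f]$ at $+\infty$ nor the needed upper bound at $-\infty$; this is precisely where the hypothesis $C_q$ \emph{large} (so that $f_{pert}$ is a genuinely small, fast-decaying bump) is used, in tension with $C_q$ \emph{small enough} to sit below the decay rate of $f_{\bm}$ — the two requirements are compatible only because $f_{\bm}$ decays superexponentially (this is where $q>1$ is essential), and reconciling them is the crux of fixing the constants $C_q$, $M$, $K$.

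Once Lemma~\ref{l:last1} is in place, Proposition~\ref{l:last} follows as announced: a translation by $\pm L\delta$ (with $L$ chosen so that $1/L$ bounds the density of $\bm$ from below on the core, as in Proposition~\ref{l:shift}) converts the linear-in-$\delta$ discrepancy $K\delta$ on $|x|<M$ into the required inequality with $f_{pert}$, while the tail inequalities, already of the form $\pm\tfrac\delta2 f_{pert}$, are only improved (for $\delta_0$ small the translation is so tiny it cannot reverse them on $|x|\ge M$); this is the content of the (omitted) Lemma~\ref{l:last2}, and it is routine. Thus the only real work is Lemma~\ref{l:last1}, and within it the simultaneous calibration of $C_q$ and $M$ for the two tails.
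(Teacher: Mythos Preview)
Your core argument ($|x|<M$) via the global Lipschitz bound on $P_k$ is correct and essentially what the paper does there. The genuine gap is in the right tail $x\ge M$.

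Your plan for that region is to show \emph{directly} that $g(x)\le\tfrac{\delta}{2}e^{-C_q x}$ (and analogously $1-g(x)\le\tfrac{\delta}{2}e^{-C_q|x|}$ on the left), so that the desired inequality becomes trivial. But $g(x)$ and $f_{\bm}(x)$ are fixed positive functions of $x$ (up to a $\delta$-perturbation), so asking for $f_{\bm}(x)\le\tfrac{\delta}{2}e^{-C_q x}$ forces the threshold $M$ to go to $+\infty$ as $\delta\to 0$. The lemma requires one $M$ valid for every $\delta\in(0,1)$, so this route cannot close. Falling back on the Lipschitz bound does not help here either: it gives
\[
|g(x)-f_{\bm}(x)|\le \mathrm{Lip}(P_k)\cdot\delta\,I[f_{pert}](x),
\]
and by Lemma~\ref{l:pert} one has $I[f_{pert}](x)=O(x^{q-1})$ as $x\to+\infty$, which \emph{grows}, whereas the target $\tfrac{\delta}{2}f_{pert}(x)=\tfrac{\delta}{2}e^{-C_q x}$ decays. (On the left tail $x\le -M$ the Lipschitz route does work, since there $I[f_{pert}](x)=\frac{\Gamma(q)}{C_q^q}f_{pert}(x)$ exactly.)

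The paper's missing ingredient is the multiplicative inequality of Lemma~\ref{l:ineq_P},
\[
P_k(\lambda+\delta\lambda_{pert})\ge e^{-\delta\lambda_{pert}}P_k(\lambda)\ge (1-\delta\lambda_{pert})P_k(\lambda),
\]
which, with $\lambda=I[f_{\bm}]$ and $\lambda_{pert}=I[f_{pert}]$, gives $f_{\bm}(x)-g(x)\le\delta\,\lambda_{pert}(x)\,f_{\bm}(x)$. The task then reduces to the $\delta$-free inequality $\lambda_{pert}(x)f_{\bm}(x)\le\tfrac{1}{2}f_{pert}(x)$ for $|x|\ge M$. For $x\ge M$ the extra factor $f_{\bm}(x)=O(x^{q(k-1)}e^{-x^q})$ (superexponential since $q>1$, by Lemma~\ref{l:td}) annihilates the polynomial growth of $\lambda_{pert}(x)$; for $x\le -M$ one bounds $f_{\bm}\le 1$ and chooses $C_q$ large enough that $\Gamma(q)/C_q^q\le\tfrac{1}{4}$. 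There is no competing upper constraint on $C_q$: the ``tension'' you anticipated does not arise. Part~\ref{l:last1b} is handled symmetrically via the companion inequality $P_k\bigl((\lambda-\delta\lambda_{pert})^+\bigr)\le e^{\delta\lambda_{pert}}P_k(\lambda)$.
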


\begin{lem}\label{l:last2}
Let $C_q$, $M$ and $K>0$ be given by Lemma~\ref{l:last1}. There exist $L>0$ and $\delta_0\in (0,1)$ such that for every $\delta\in(0,\delta_0)$ the following holds:
\begin{enumerate}
\item if $\mu\in\cQ$ satisfies
\[
f_{\Phi[\mu]}(x)\ge\begin{cases}
f_{\bm}(x)-K\,\delta&\text{for every }|x|< M,\\
f_{\bm}(x)-\frac \delta 2 f_{pert}(x)&\text{for every }|x|\ge M;
\end{cases}
\]
then $T_{L\delta}\Phi[\mu]$ is of adapted lower class $(M,\delta/2)$;
\item if $\mu\in\cQ$ satisfies
\[
f_{\Phi[\mu]}(x)\le \begin{cases}
f_{\bm}(x)+K\,\delta&\text{for every }|x|< M,\\
f_{\bm}(x)+\frac \delta 2 f_{pert}(x)&\text{for every }|x|\ge M.
\end{cases}
\]
then $T_{-L\delta}\Phi[\mu]$ is of adapted upper class $(M,\delta/2)$.
\end{enumerate}
\end{lem}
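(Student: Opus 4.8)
The plan is to regard Lemma~\ref{l:last2} as a one-dimensional estimate on tail distribution functions, whose only non-elementary ingredients are that $\bm$ has a continuous everywhere-positive density $\rho_{\bm}$ (immediate from the cavity equation $f_{\bm}=P_k\circ I[f_{\bm}]$, so $\rho_{\bm}$ is bounded away from $0$ on each compact interval) and that the tails of $\bm$ decay superexponentially, so that the \emph{hazard rates} $\rho_{\bm}/f_{\bm}$ at $+\infty$ and $\rho_{\bm}/(1-f_{\bm})$ at $-\infty$ diverge --- this is Lemma~\ref{l:td} together with the cavity equation. I would prove item~(1) and obtain item~(2) by the mirror argument $x\mapsto -x$, which swaps the two tails and exchanges the roles of $f_{\bm}$ and $1-f_{\bm}$. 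Since $f_{T_{L\delta}\Phi[\mu]}(x)=f_{\Phi[\mu]}(x-L\delta)$, the goal of item~(1) is to produce $C_{q},M,L>0$ and $\delta_0\in(0,1)$ such that, for $\delta\in(0,\delta_0)$, the hypothesis forces $f_{\Phi[\mu]}(x-L\delta)\ge f_{\bm}(x)-\tfrac{\delta}{2}f_{pert}(x)$ for all $x$.

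The constants would be fixed in the following order, with no circularity: first $C_{q}$; then $M$ large enough that $\rho_{\bm}$ is monotone on $[M,+\infty)$ and on $(-\infty,-M]$ and that $\rho_{\bm}(x)\ge 2C_{q}\min\{f_{\bm}(x),1-f_{\bm}(x)\}$ for $|x|\ge M$ (possible since the hazard rates diverge); then $L_1>0$ with $\rho_{\bm}\ge 1/L_1$ on $[-M-1,M+1]$ and $L:=\max\{1,K\}L_1$; finally $\delta_0$ small so that $L\delta_0\le 1$ and $e^{C_{q}L\delta_0}\le 2$.

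The verification then proceeds region by region, always via $f_{\bm}(u)-f_{\bm}(v)=\int_u^v\rho_{\bm}$ for $u<v$ and the monotonicity of $f_{\bm}$. On the core $|x|<M$ (where $f_{pert}=0$) translating to the right produces a gain $f_{\bm}(x-L\delta)-f_{\bm}(x)\ge L\delta/L_1\ge K\delta$ that absorbs the $-K\delta$ term of the hypothesis, with a small extra check near the left endpoint comparing $\tfrac{\delta}{2}f_{pert}(x-L\delta)\le\tfrac{\delta}{2}e^{-C_{q}M}$ against the same gain. On the left tail $x\le -M$ the estimate is essentially free, because $f_{pert}(x-L\delta)=e^{-C_{q}(|x|+L\delta)}\le f_{pert}(x)$, so translating right improves both terms at once. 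The delicate region --- where I expect the real work to sit --- is the right tail $x\ge M$: there the shift \emph{inflates} the perturbation, $f_{pert}(x-L\delta)=e^{C_{q}L\delta}f_{pert}(x)$, and for $x\ge M+L\delta$ one must show $f_{\bm}(x-L\delta)-f_{\bm}(x)\ge\tfrac{\delta}{2}e^{-C_{q}x}(e^{C_{q}L\delta}-1)$; bounding the right side by $C_{q}L\delta^2 e^{-C_{q}x}$ and the left side below by $L\delta\,\rho_{\bm}(x)$ reduces this to $\rho_{\bm}(x)\ge C_{q}\delta e^{-C_{q}x}$.

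The crux is to dispose of this last inequality by a case split. If $f_{\bm}(x)<\tfrac{\delta}{2}e^{-C_{q}x}$, then the target lower bound $f_{\bm}(x)-\tfrac{\delta}{2}f_{pert}(x)$ is already nonpositive and there is nothing to prove; if $f_{\bm}(x)\ge\tfrac{\delta}{2}e^{-C_{q}x}$, then $\rho_{\bm}(x)e^{C_{q}x}=\bigl(\rho_{\bm}(x)/f_{\bm}(x)\bigr)f_{\bm}(x)e^{C_{q}x}\ge 2C_{q}\cdot\tfrac{\delta}{2}=C_{q}\delta$ by the choice of $M$, which is exactly what is needed. This is precisely the step that uses the superexponential decay of Lemma~\ref{l:td} (the divergence of the hazard rate), and it is what forces $M$ to be taken large in terms of $C_{q}$. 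Item~(2) follows verbatim after the reflection $x\mapsto-x$, $f\mapsto 1-f$, with the two tails exchanged; and since $L$ and $\delta_0$ depend only on $C_{q},M,K$, the conclusion has the claimed uniform form.
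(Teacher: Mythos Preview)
Your argument is essentially sound, but you have manufactured a difficulty that the paper sidesteps entirely. The key difference is \emph{where} the hypothesis is evaluated on the tail region $|x|\ge M$.

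The paper observes that since tail distribution functions are non-increasing, $f_{T_{L\delta}\Phi[\mu]}(x)=f_{\Phi[\mu]}(x-L\delta)\ge f_{\Phi[\mu]}(x)$; and the hypothesis, read at $x$ itself, already gives $f_{\Phi[\mu]}(x)\ge f_{\bm}(x)-\tfrac{\delta}{2}f_{pert}(x)$. So the tail inequality is automatic, with no analysis whatsoever. Only the core $|x|<M$ needs work, and there the positive lower bound on $\rho_{\bm}$ over a compact interval absorbs the $K\delta$ defect after a translation by $L\delta$ --- exactly as you do. Item~(2) is symmetric.

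You instead evaluate the hypothesis at $x-L\delta$ throughout, which on the right tail inflates the perturbation by $e^{C_qL\delta}$ and forces you into the hazard-rate bound and the case-split on the size of $f_{\bm}(x)$. This buys nothing and costs two things. First, you need extra largeness conditions on $M$ (eventual monotonicity of $\rho_{\bm}$ and $\rho_{\bm}\ge 2C_q\min\{f_{\bm},1-f_{\bm}\}$ outside $[-M,M]$), but the lemma takes $C_q,M,K$ as \emph{given} by Lemma~\ref{l:last1}; you are not free to enlarge $M$ here without going back and merging the two constructions. Second, the eventual monotonicity of $\rho_{\bm}$ is plausible from the tail asymptotics but is nowhere established in the paper, so that step would require its own justification. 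You also leave the strip $M\le x<M+L\delta$ implicit. All of these issues evaporate once you apply the hypothesis at $x$ on the tails; then only $L$ and $\delta_0$ need to be chosen, and no property of $\bm$ beyond positivity and continuity of its density on a fixed compact set is used.
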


The proof of Lemma~\ref{l:last2} is almost immediate, following the lines of \cite[Lemma~17]{KKT}. 
Namely, in the compact region $|x|<M$ the problem is handled by a translation, as the measure $\bm$ is absolutely 
continuous with positive continuous density. At the same time we have 
\[
f_{T_{L\delta}\Phi[\mu]}(x)\ge f_{\Phi[\mu]}(x)\ge f_{-T_{L\delta}\Phi[\mu]}(x)
\] 
for any $x$, and hence the inequality for $|x|\ge M$ holds automatically.

Lemmas~\ref{l:last1} and~\ref{l:last2} together imply Proposition~\ref{l:last}, and Lemma~\ref{l:last2} is already proven. 
It only remains to prove Lemma~\ref{l:last1}. We start with two auxiliary estimates:

\begin{lem}\label{l:pert}
For every $M>0$, the function
$
I[f_{pert}](x)=\int_{-x}^{+\infty}(x+y)^{q-1}f_{pert}(y)dy
$
satisfies:
\begin{equation}\label{eq:mu1}
I[f_{pert}](x)=\frac{\Gamma(q)}{C_{q}^q}f_{pert}(x)\quad\text{for every }x\le -M;
\end{equation}
and
\begin{equation}\label{eq:mu2}
I[f_{pert}](x)= O(x^{q-1})\quad\text{as }x\to+\infty, 
\end{equation}
with the $O(x^{q-1})$ that is uniform on $M>0$.
\end{lem}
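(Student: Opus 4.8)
The plan is to compute $I[f_{pert}]$ essentially explicitly, by splitting the integration variable according to the support of $f_{pert}$ — which is the union of the two half-lines $\{y\le -M\}$ and $\{y\ge M\}$ — and distinguishing the two regimes $x\le -M$ and $x\to+\infty$. Both computations will reduce to the elementary Gamma-type integral $\int_0^{+\infty}u^{q-1}e^{-C_{q}u}\,du=\Gamma(q)/C_{q}^{q}$.

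For~\eqref{eq:mu1}, I would first note that when $x\le -M$ the lower endpoint $-x$ of the integral satisfies $-x\ge M$, so throughout the domain of integration one has $y\ge M$, hence $f_{pert}(y)=e^{-C_{q}y}$. The substitution $u=x+y$ then turns the integral into $e^{C_{q}x}\int_{0}^{+\infty}u^{q-1}e^{-C_{q}u}\,du=\frac{\Gamma(q)}{C_{q}^{q}}e^{C_{q}x}$, and since $x\le -M<0$ one has $e^{C_{q}x}=e^{-C_{q}|x|}=f_{pert}(x)$, which is precisely~\eqref{eq:mu1}.

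For~\eqref{eq:mu2} I would take $x>M$ and split the integral over $[-x,+\infty)$ into the two pieces $[-x,-M]$ and $[M,+\infty)$. On $[M,+\infty)$ one enlarges the domain to $[0,+\infty)$ and uses $(x+y)^{q-1}\le\const(q)(x^{q-1}+y^{q-1})$ (valid since $q>1$) to bound the contribution by $\const(q)\bigl(x^{q-1}/C_{q}+\Gamma(q)/C_{q}^{q}\bigr)=O(x^{q-1})$, with a constant not involving $M$. On $[-x,-M]$ one has $f_{pert}(y)=e^{C_{q}y}$ and $0\le x+y\le x$, so $(x+y)^{q-1}\le x^{q-1}$ and the contribution is at most $x^{q-1}\int_{-\infty}^{-M}e^{C_{q}y}\,dy=x^{q-1}e^{-C_{q}M}/C_{q}\le x^{q-1}/C_{q}$. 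Adding the two gives $I[f_{pert}](x)=O(x^{q-1})$ as $x\to+\infty$, uniformly in $M>0$.

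Since the statement is so explicit, the only point that requires care is the uniformity in $M$ in~\eqref{eq:mu2}; this is obtained for free because every estimate is produced either by enlarging the integration domain so as to erase the cut-off at $M$, or by bounding a factor $e^{-C_{q}M}$ by $1$. The elementary inequality $(a+b)^{q-1}\le\const(q)(a^{q-1}+b^{q-1})$ for $a,b\ge 0$ is the only place where $q\ge 1$ is used, and it holds here because $q>1$.
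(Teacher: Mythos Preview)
Your proof is correct and follows essentially the same route as the paper: the computation for $x\le -M$ is identical, and for $x>M$ both you and the paper split the integral at $\pm M$ and bound the $[-x,-M]$ piece by $x^{q-1}e^{-C_q M}/C_q$. The only minor difference is on the $[M,+\infty)$ piece: the paper treats it as a constant (which is not quite accurate for $x>M$), whereas your use of the inequality $(x+y)^{q-1}\le\const(q)(x^{q-1}+y^{q-1})$ gives a clean and correct $O(x^{q-1})$ bound there, so your argument is in fact slightly tighter on that point.
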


\begin{proof}
For $y\ge M$ we have $f_{pert}(y)=e^{-C_{q}y}$. Then when $x\le -M$, we can easily compute the convolution integral after shifting the variable:
\begin{align*}
\int_{-x}^{+\infty}(x+y)^{q-1}e^{-C_{q}y}dy&=
e^{C_{q}x}\int_0^{+\infty}y^{q-1}e^{-C_{q}y}dy\\
&=\frac{\Gamma(q)}{C_{q}^q}e^{C_{q}x}=\frac{\Gamma(q)}{C_{q}^q}f_{pert}(x),
\end{align*}
whence the first equality is proved.
Observe that the same calculation gives that for any $|x|\le M$,
\[
\int_{-x}^{+\infty}(x+y)^{q-1}f_{pert}(y)dy=
\frac{\Gamma(q)}{C_{q}^q}e^{-C_{q}M}.
\]
When $x>M$, we split the convolution integral at $-M$ and find
\[
\int_{-x}^{+\infty}(x+y)^{q-1}f_{pert}(y)dy=
\frac{\Gamma(q)}{C_{q}^q}e^{-C_{q}M}+\int_{-x}^{-M}(x+y)^{q-1}e^{C_{q}y}dy.
\]
The first term is a constant (and goes to zero as $M\to+\infty$). For the second term, by monotonicity we have 
\begin{align*}
\int_{-x}^{-M}(x+y)^{q-1}e^{C_{q}y}dy &\le (x-M)^{q-1}\int_{-x}^{-M}e^{C_{q}y}dy\\
&\le x^{q-1}\frac{1}{C_{q}}\left(e^{-C_{q}M}-e^{-C_{q}x}\right)\le \frac{e^{-C_{q}M}}{C_{q}}x^{q-1},
\end{align*}
concluding the proof of the lemma.
\end{proof}

\begin{lem}\label{l:ineq_P}
For any positive $\lambda$, $\lambda_{pert}$ and $\delta>0$, the function $P_k$ verifies
\begin{equation}\label{eq:Pk_sum}
 P_k(\lambda+\delta\,\lambda_{pert})\ge e^{-\delta\,\lambda_{pert}} P_k(\lambda)
\end{equation}
and
\begin{equation}\label{eq:Pk_sum2}
 P_k\left((\lambda-\delta\,\lambda_{pert})^+\right)\le e^{\delta\,\lambda_{pert}} P_k(\lambda),
\end{equation}
where $(\lambda-\delta\,\lambda_{pert})^+=\max\left(\lambda-\delta\,\lambda_{pert},0\right)$.
\end{lem}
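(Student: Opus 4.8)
The plan is to factor $P_k$ through its polynomial part. Write $P_k(\lambda)=e^{-\lambda}Q_k(\lambda)$ with $Q_k(\lambda):=\sum_{j=0}^{k-1}\lambda^j/j!$. The two elementary facts about $Q_k$ that I will use are: since all its coefficients are nonnegative, $Q_k$ is nondecreasing on $[0,+\infty)$; and $Q_k(\lambda)\ge Q_k(0)=1$ for every $\lambda\ge 0$ (here $k\ge 1$ is used).

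For inequality~\eqref{eq:Pk_sum}, set $\mu:=\lambda+\delta\,\lambda_{pert}\ge\lambda\ge 0$. Multiplying both sides of the asserted inequality by $e^{\mu}$ and using $e^{-\delta\lambda_{pert}}P_k(\lambda)=e^{-\mu}Q_k(\lambda)$, the claim becomes $Q_k(\mu)\ge Q_k(\lambda)$, which holds by monotonicity of $Q_k$ since $\mu\ge\lambda$.

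For inequality~\eqref{eq:Pk_sum2}, I would distinguish two cases according to the sign of $\lambda-\delta\,\lambda_{pert}$. If $\lambda\ge\delta\,\lambda_{pert}$, then $(\lambda-\delta\,\lambda_{pert})^+=\lambda-\delta\,\lambda_{pert}\ge 0$ and, multiplying through by $e^{\lambda-\delta\lambda_{pert}}$, the inequality reduces to $Q_k(\lambda-\delta\,\lambda_{pert})\le Q_k(\lambda)$, again true by monotonicity. If instead $\lambda<\delta\,\lambda_{pert}$, then the left-hand side is $P_k(0)=1$, while the right-hand side equals $e^{\delta\lambda_{pert}-\lambda}Q_k(\lambda)$, a product of two quantities each $\ge 1$ (the exponential because $\delta\lambda_{pert}>\lambda$, and $Q_k(\lambda)\ge 1$); hence the right-hand side is $\ge 1$ and the inequality holds.

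There is no genuine obstacle here; the only point requiring a moment's care is the truncation $(\,\cdot\,)^+$ in the second inequality, which forces the case split, together with the observation that $Q_k\ge 1$ on $[0,+\infty)$, so that the estimate survives when the argument is clipped to $0$. Everything else is the trivial monotonicity of a polynomial with nonnegative coefficients.
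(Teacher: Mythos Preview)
Your proof is correct. The paper itself omits the argument, calling it ``a straightforward estimate''; your factorisation $P_k(\lambda)=e^{-\lambda}Q_k(\lambda)$ with $Q_k$ nondecreasing and $Q_k\ge 1$ on $[0,\infty)$ is exactly the natural way to carry out that estimate, and the case split for the truncation is handled cleanly.
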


The proof of Lemma~\ref{l:ineq_P} is a straightforward estimate. We are now ready to conclude with the proof of Lemma~\ref{l:last1}:

\begin{proof}[Proof of Lemma~\ref{l:last1}]
We claim that it is enough to consider $C_{q}$ so large that
\begin{equation}\label{eq:constant_C}
\frac{\Gamma(q)}{C_{q}^q}\le \frac{1}{4}.
\end{equation}

We first prove \ref{l:last1a}).
At the level of tail distribution functions, the operator $\Phi$ acts by $f\mapsto P_k\circ I[f]$ and reverses the order relations. We remark that the operator $f\mapsto P_k\circ I[f]$ is well-defined and order-reversing even if $f$ is not a tail distribution function. Therefore from~\eqref{eq:upper_class} one has the inequality
\begin{equation}
\label{eq:in-phi}
f_{\Phi[\mu]}(x)\ge P_k\left (I\left [f_{\bm}+\delta f_{pert}\right ](x)\right )\quad\text{for every }x\in\R. 
\end{equation}
Observe that the function $I$ is defined by a convolution and is therefore linear. 

Recall that $f_{\bm}$ satisfies $f_{\bm}=P_k\circ I[f_{\bm}]$. Then, considering $\lambda=I[f_{\bm}]$ and $\lambda_{pert}=I[f_{pert}]$ in Lemma~\ref{l:ineq_P} as functions, using~\eqref{eq:Pk_sum} in~\eqref{eq:in-phi}, one has
\begin{align*}
f_{\Phi[\mu]}(x)&\ge e^{-\delta\,\lambda_{pert}(x)}f_{\bm}(x)\\
&\ge \left (1-\delta\,\lambda_{pert}(x)\right )f_{\bm}(x)\quad\text{for every }x\in\R.
\end{align*}
Therefore, we have the inequality in \ref{l:last1a}) as soon as we prove that there exist $M$ and $K>0$ such that
\begin{equation}\label{eq:in-mu}
\lambda_{pert}(x)f_{\bm}(x)\le
\begin{cases}
K&\text{for every }|x|< M,\\
\frac{1}{2} f_{pert}(x)&\text{for every }|x|\ge M.
\end{cases}
\end{equation}
The first bound follows simply by compactness (in fact, from the proof of Lemma~\ref{l:pert} and the choice~\eqref{eq:constant_C}, we can take $K=\frac{1}{4}e^{-C_qM}$). For the second inequality we need to proceed differently, according to the sign of $x$. This is due to the fact that the asymptotic behaviours of the function $\lambda_{pert}=I[f_{pert}]$, at $+\infty$ and $-\infty$, are different (Lemma~\ref{l:pert}).

When $x\le -M$, we can plainly bound $f_{\bm}(x)$ by $1$
and then, by the equality \eqref{eq:mu1} from Lemma~\ref{l:pert}, we have the upper bound \eqref{eq:in-mu}, provided $C_{q}$ satisfies \eqref{eq:constant_C}.

On the other hand, as $x\to+\infty$ we have that the tail distribution function $f_{\bm}$ decays sufficiently fast compared to the other functions: using \eqref{eq:tail_cond1} from Lemma~\ref{l:td} jointly with \eqref{eq:mu2} from Lemma~\ref{l:pert}, one has
\[
\lambda_{pert}(x)f_{\bm}(x)=O\left (x^{qk-1}e^{-x^q}\right ),
\]
while $f_{pert}(x)=e^{-C_{q}x}$. Thus the second inequality~\eqref{eq:in-mu} is satisfied as soon as $M$ is sufficiently large. This proves~\ref{l:last1a}).

\smallskip

We now focus on \ref{l:last1b}). 
Since $f_\mu$ is a tail distribution function, it cannot be negative and 
therefore the inequality \eqref{eq:lower_class} is actually stronger:
\[
f_\mu(x)\ge (f_{\bm}(x)-\delta\,f_{pert}(x))^+\quad\text{for all }x\in\R.
\]
Proceeding as above, and using the same notations, we get
\[
f_{\Phi[\mu]}(x)\le P_k\left(\left(\lambda(x)-\delta \lambda_{pert}(x)\right)^+\right)\quad\text{for every }x\in\R
\]
and thus, by \eqref{eq:Pk_sum2} in Lemma~\ref{l:ineq_P}, we have
\begin{align*}
f_{\Phi[\mu]}(x)&\le e^{\delta\,\lambda_{pert}(x)}f_{\bm}(x)\\
&=f_{\bm}(x)+\left(e^{\delta\,\lambda_{pert}(x)}-1\right)f_{\bm}(x)\quad\text{for every }x\in\R.
\end{align*}
We need to prove that for $M$ large enough the following condition is satisfied: there exists $K>0$ such that for every $\delta\in (0,1)$
\begin{equation}\label{eq:in-mu2}
\left(e^{\delta\,\lambda_{pert}(x)}-1\right)f_{\bm}(x)\le 
\begin{cases}
K\delta&\text{for every }|x|<M,\\
\frac{\delta}{2} f_{pert}(x)&\text{for every }|x|\ge M.
\end{cases}
\end{equation}
The first inequality follows again by compactness. Let $x\le -M$; by the choice \eqref{eq:constant_C} of $C_{q}$, using \eqref{eq:mu1}, we observe that $\delta\,\lambda_{pert}(x)$ is upper bounded by~$\frac{1}{4}$. Then, for such values of $x$, we have
\begin{align*}
\left(e^{\delta\,\lambda_{pert}(x)}-1\right)f_{\bm}(x) &\le 2\delta\,\lambda_{pert}(x)f_{\bm}(x)\\
&\le 2\delta\,\lambda_{pert}(x)\le \frac{\delta}{2}f_{pert}(x)
\end{align*}
as desired.
On the other hand, as $x\to+\infty$, using \eqref{eq:tail_cond2} from Lemma~\ref{l:td} jointly with \eqref{eq:mu2} from Lemma~\ref{l:pert}, one has
\[
\left(e^{\delta\,\lambda_{pert}(x)}-1\right ) f_{\bm}(x)
=o\left (e^{-\frac{1}{2}x^q}\right ),
\]
while $f_{pert}(x)=e^{-C_{q}x}$. Thus the second inequality~\eqref{eq:in-mu2} is satisfied as soon as $M$ is sufficiently large. This proves~\ref{l:last1b}).
\end{proof}

We have concluded the proof of Lemma~\ref{l:last1}, and thus of Proposition~\ref{l:last}.

\section*{Concluding remarks}

\begin{rem}
The rather elementary strategy for endogeny that we have shown here seems to require a little ``complexity'' of the models. For the problem of random metrics (Theorem~\ref{t:applying}), we require the graph to be non-pivotal; for the mean-field optimization problems (Theorem~\ref{t:TSP}) we require the pseudo-dimension to be greater than~$1$. If these conditions are not satisfied, the stationary measures for the associated RDEs have exponential tail decays, which do not guarantee the assumption of superexponential decay of the center \ref{a:superexp}.
This last condition is essential in our proof of Lemma~\ref{l:7}, with which we control the displacement of the center (Lemma~\ref{l:7}).
It seems that with the simple exponential tail decay, our method should be completed with a finer asymptotic analysis. The result of~\cite{B} suggests the endogeny must hold also in these cases.
\end{rem}

\begin{rem}
It seems very plausible that with the employment of the adapted classes (Definition~\ref{d:adapted}) the arguments of \cite{BP} can be carried out to give an alternative proof of Theorem~\ref{t:BP}.
\end{rem}
\section*{Acknowledgements}
The authors thank Mikhail Khristoforov for the discussions on the cut-off method arisen during their common work \cite{KKT}, and David Aldous for his suggestions.
This work has been written during a visit of V.K.~to the Universidade Federal Fluminense supported by the R\'eseau France-Br\'esil in Mathematics.
V.K.~thanks also the support of the RFBR project 16-01-00748-a, the hospitality of University of Auckland, and both authors acknowledge the hospitality of CIRM.

\begin{bibdiv}
\begin{biblist}

\bib{A-assign}{article}{
	author={\scshape Aldous, D.J.},
  title={Asymptotics in the random assignment problem},
  journal={Probab. Theory Related Fields}, 
  volume={93},
  year={1992},
  number={4},
  pages={507--534},
}

\bib{zeta2}{article}{
	author={\scshape Aldous, D.J.},
  title={The $\zeta(2)$ limit in the random assignment problem},
  journal={Random Struct. Alg.}, 
  volume={18},
  year={2001},
  pages={381--418},
  number={4},
}

\bib{RDE}{article}{
	author={\scshape Aldous, D.J.},
	author={\scshape Bandyopadhyay, A.},
	title={A survey of $\max$-type recursive distributional equations},
	journal={Ann. Appl. Probab.},
	volume={15},
	number={2},
	date={2005},
	pages={1047--1110},
}

\bib{B}{article}{
author={\scshape Bandyopadhyay, A.},
year={2011},
title={Endogeny for the logistic recursive
distributional equation},
journal={Z. Anal. Anwend.},
volume={30},
number={2},
pages={237--251}
}

\bib{DD}{article}{
	author={\scshape Ding, J.},
	author={\scshape Dunlap, A.},
	title={Liouville first passage percolation: subsequential scaling limits at high temperatures},
	journal={preprint arXiv:1605.0401}
}

\bib{frieze}{article}{
	author={\scshape Frieze, A.},
	title = {On Random Symmetric Travelling Salesman Problems},
	year={2004},
	journal={Math.~Oper.~Res.},
	volume={29},
	number={4},
	pages={878--890}
}

\bib{K}{article}{
author = {\scshape Khandwawala, M.},
title = {Solutions to recursive distributional
equations for the mean-field TSP and related problems},
journal = {preprint arXiv:1405.1316},
year = {2014}
}

\bib{KS}{article}{
author = {\scshape Khandwawala, M.},
author = {\scshape Sundaresan, R.},
journal = {Ann. Appl. Probab.},
number = {6},
pages = {2414--2454},
title = {Belief propagation for optimal edge cover in the random complete graph},
volume = {24},
year = {2014}
}

\bib{KKT}{article}{
author = {\scshape Khristoforov, M.},
author = {\scshape Kleptsyn, V.},
author = {\scshape Triestino, M.},
title = {Stationary random metrics on hierarchical graphs via $(\min, +)$-type
recursive distributional equations},
journal = {Commun. Math. Phys.},
year = {2016},
volume={345},
number={1},
pages={1--76}
}

\bib{kingman}{book}{
   title =     {Poisson Processes},
   author =    {\scshape Kingman, J.F.C.},
   publisher = {Oxford University Press},
   year =      {1993},
   series =    {Oxford Studies in Probability},
   edition =   {},
   volume =    {},
}

\bib{larsson}{article}{
author={\scshape Larsson, J.}, 
title={The Minimum Perfect Matching in Pseudo-dimension $0<q<1$},
journal = {preprint arXiv:1403.3635},
year = {2014}
}

\bib{MP0}{article}{
author={\scshape M\'ezard, M.},
author={\scshape Parisi, G.},
title={Replicas and optimization},
journal={J. Physique Lett.},
volume={46},
year={1985},
number={17},
pages={771--778},
}

\bib{MP1}{article}{
author={\scshape M\'ezard, M.},
author={\scshape Parisi, G.},
title={A replica analysis of the travelling salesman problem},
journal={J. Physique},
volume={47},
year={1986},
pages={1285--1296},
}

\bib{MP2}{article}{
author={\scshape M\'ezard, M.},
author={\scshape Parisi, G.},
title={On the solution of the random link matching problem},
journal={J. Physique},
volume={48},
year={1987},
number={9},
pages={1451--1459},
}

\bib{PW}{article}{
title={Mean field matching and traveling salesman problems in pseudo-dimension $1$},
author={\scshape Parisi, G.},
author={\scshape W\"astlund, J.},
journal={available at \url{http://www.math.chalmers.se/~wastlund/NewPseudoDim1.pdf}},
year={2012},
}

\bib{salez}{article}{
	author={\scshape Salez, J.},
	title={The M\'ezard-Parisi equation for matchings
in pseudo-dimension $d > 1$},
	journal={Electron. Commun. Probab.},
	volume={20},
	number={13},
	pages={1--13},
}

\bib{BP}{article}{
	author={\scshape Salez, J.},
	author={\scshape Shah, D.},
	title={Belief propagation: an asymptotically optimal algorithm for the random assignment problem},
	journal={Math. Oper. Res.},
	volume={34},
	number={2},
	year={2009},
	pages={468--480}
}

\bib{W-TSP}{article}{
title={The mean field traveling salesman and related
problems},
journal={Acta Math.},
volume={204},
year={2010},
pages={91--150},
author={\scshape W\"astlund, J.},
}

\bib{W-match}{article}{
author={\scshape W\"astlund, J.},
title={Replica symmetry of the minimum matching}, 
journal={Ann. of Math. (2)},
volume={175},
year={2012},
number={3},
pages={1061--1091}
}

\end{biblist}
\end{bibdiv}

\end{document}